\DeclareMathAlphabet{\mathcalligra}{T1}{calligra}{m}{n}
\DeclareFontShape{T1}{calligra}{m}{n}{<->s*[1.1]callig15}{}
\pgfplotsset{compat=newest}
\crefname{equation}{}{}
\crefname{figure}{Figure}{Figures}
\newtheorem{remark}{Remark}
\crefname{algocf}{algorithm}{algorithms}
\crefname{definition}{definition}{definitions}
\newcommand{\RomanNumeralCaps}[1]
\newtheorem{example}{Example}
\let\min\relax \DeclareMathOperator*\min{\vphantom{p}min}
\let\max\relax \DeclareMathOperator*\max{\vphantom{p}max}
\let\subset\relax \DeclareMathOperator{\subset}{\subseteq}
\let\tilde\widetilde
\DeclareMathOperator{\ndofs}{ndofs}
\DeclareMathOperator{\mean}{E}
\DeclareMathOperator{\variance}{V}
\DeclareMathOperator{\sd}{SD}
\DeclareMathOperator{\rms}{RMS}
\newcommand{\R}{\mathbb{R}}      \newcommand{\mesh}{\mcT} \newcommand{\dd}{\,\mathrm{d}}
\newcommand{\mcT}{\mathcal{T}}
\newcommand{\bbE}{\mathbb{E}}
\newcommand{\mfem}{{\texttt{MFEM}}}
\newcolumntype{?}{!{\vrule width 1.2pt}}
\newsavebox{\measure@tikzpicture}
\definecolor{color0}{rgb}{0.7843, 0.7843, 0.7843}
\definecolor{color1}{rgb}{0, 0.4470, 0.7410}
\definecolor{color2}{rgb}{0.8500, 0.3250, 0.0980}
\definecolor{color3}{rgb}{0.9290, 0.6940, 0.1250}
\definecolor{color4}{rgb}{0.7060, 0.3840, 0.7650}
\definecolor{color5}{rgb}{0.4660, 0.6740, 0.1880}
\definecolor{color6}{rgb}{0.3010, 0.7450, 0.9330}
\definecolor{color7}{rgb}{0.6350, 0.0780, 0.1840}
\definecolor{color8}{rgb}{0.0, 0.4078, 0.3412}
\pgfplotsset{
  log x ticks with fixed point/.style={
      xticklabel={
        \pgfkeys{/pgf/fpu=true}
        \pgfmathparse{exp(\tick)}        \pgfmathprintnumber[fixed relative, precision=3]{\pgfmathresult}
        \pgfkeys{/pgf/fpu=false}
      }
  },
  log y ticks with fixed point/.style={
      yticklabel={
        \pgfkeys{/pgf/fpu=true}
        \pgfmathparse{exp(\tick)}        \pgfmathprintnumber[fixed relative, precision=3]{\pgfmathresult}
        \pgfkeys{/pgf/fpu=false}
      }
  }
}
\tikzset{
  ashadow/.style={opacity=.25, shadow xshift=0.07, shadow yshift=-0.07},
}
\definecolor{CustomGreen}{RGB}{65,169,50}
\newdimen\commentwd
\let\oldtcp\tcp
\def\tcp*[#1]#2{\setbox0\hbox{#2}\ifdim\wd\z@>\commentwd\global\commentwd\wd\z@\fi
\oldtcp*[r]{\leavevmode\hbox to \commentwd{\box0\hfill}}}
\let\oldalgorithm\algorithm
\def\algorithm{\oldalgorithm
\global\commentwd\z@
\expandafter\ifx\csname commentwd@\romannumeral\csname c@\algocf@float\endcsname\endcsname\relax\else
\global\commentwd\csname commentwd@\romannumeral\csname c@\algocf@float\endcsname\endcsname
\fi
}
\let\oldendalgorithm\endalgorithm
\def\endalgorithm{\oldendalgorithm
\immediate\write\@auxout{\gdef\expandafter\string\csname commentwd@\romannumeral\csname c@\algocf@float\endcsname\endcsname{\the\commentwd}}}
\newcommand{\red}[1]{{#1}}
\newcommand{\rev}[1]{{#1}}
\pgfplotsset{
    legend image with text/.style={
        legend image code/.code={            \node[anchor=center] at (0.3cm,0cm) {#1};
        }
    },
}
\tikzset{
    scale plot marks/.is choice,
    scale plot marks/false/.code={
        \def\pgfuseplotmark##1{\pgftransformresetnontranslations\csname pgf@plot@mark@##1\endcsname}
    },
    scale plot marks/true/.style={},
    scale plot marks/.default=true
}
\begin{document}

\title{
	    Learning robust marking policies for adaptive mesh refinement
            \thanks{Submitted to the editors \today.
        }
}

\author{
	Andrew~Gillette\thanks{\protect
        Center for Applied Scientific Computing,
        Lawrence Livermore National Laboratory,
        Livermore, CA 94550 USA
        (\email{gillette7@llnl.gov}, \email{petrides1@llnl.gov}).}
    \and Brendan~Keith\thanks{\protect
        Division of Applied Mathematics,
        Brown University,
        Providence, RI 02912 USA
        (\email{brendan\_keith@brown.edu}).}
    \and Socratis~Petrides\footnotemark[2]
}

\date{\today}

\maketitle

\begin{abstract}
In this work, we revisit the marking decisions made in the standard adaptive finite element method (AFEM).
Experience shows that a na\"{i}ve marking policy leads to inefficient use of computational resources for adaptive mesh refinement (AMR).
Consequently, using AMR in practice often involves ad-hoc or time-consuming offline parameter tuning to set appropriate parameters for the marking subroutine.
To address these practical concerns, we recast AMR as a Markov decision process in which refinement parameters can be selected on-the-fly at run time, without the need for pre-tuning by expert users.
In this new paradigm, the refinement parameters are also chosen adaptively via a marking policy that can be optimized using methods from reinforcement learning.
We use the Poisson equation to demonstrate our techniques on $h$- and $hp$-refinement benchmark problems, and our experiments suggest that superior marking policies remain undiscovered for many classical AFEM applications.
Furthermore, an unexpected observation from this work is that marking policies trained on one family of PDEs are sometimes robust enough to perform well on problems far outside the training family.
For illustration, we show that a simple $hp$-refinement policy trained on 2D domains with only a single re-entrant corner can be deployed on far more complicated 2D domains, and even 3D domains, without significant performance loss.
For reproduction and broader adoption, we accompany this work with an open-source implementation of our methods.
\end{abstract}

\section{Introduction} \label{sec:introduction}

A longstanding challenge for adaptive finite element methods (AFEMs) is the creation of strategies or \textit{policies} to guide the iterative refinement process.
An ideal policy should balance the competing goals of maximizing error reduction against minimizing growth in number of degrees of freedom.
The modern tools of reinforcement learning have the potential to discover optimal policies in an automated fashion, once a suitable connection between the finite element problem and the reinforcement learning environment has been established.

In this work, we focus on a very simple connection to the reinforcement learning community, based exclusively on the \textsc{mark} step of the traditional AFEM process:
\begin{equation}
\label{SEMR}
	\textsc{solve}~~\rightarrow~~\textsc{estimate}~~\rightarrow~~\textsc{mark}~~\rightarrow~~\textsc{refine}
\end{equation}
We refer to a complete pass through the above sequence as one iteration of the process.
For each iteration, the \textsc{mark} step receives a list of error estimates for each element in a mesh and must produce a list of  elements to mark for $h$-refinement (subdivide geometrically), $p$-refinement (raise the local polynomial approximation order), or de-refinement (undo a previous refinement).
Common practice is to leave parameters that control the \textsc{mark} step fixed, allowing users to focus on analyzing other aspects of the process or application problem.
Here, we treat the selection of parameters controlling the \textsc{mark} step as a decision that can be optimized by reinforcement learning and demonstrate that how such a treatment can improve the overall efficiency and accuracy of the AFEM process.

We motivate the potential gains from such an approach in the heavily studied context of $h$-refinement AFEM for Laplace's equation over an L-shaped domain with Dirichlet boundary conditions defined to match a known singular solution.
A standard marking policy in this setting is to mark all elements whose error estimate is greater than or equal to $\theta$ times the maximum error in the list, where $\theta\in[0,1]$ is a fixed parameter.
In \Cref{fig:motivation}, we show final meshes at the end of an AFEM workflow employing either $\theta=0.1$ or $\theta=0.9$, where the refinement process is stopped once the global error estimate is below $1.07\times 10^{-3}$.
While the meshes have visibly similar refinement patterns, the computational cost is dramatically different: for $\theta=0.1$, the final mesh occurs after 11 iterations and 2441 degrees of freedom (dofs), while for $\theta=0.9$ the final mesh occurs after 36 iterations and 2169 dofs. 
This simple example highlights the trade-off between iteration count and dof count that presents an opportunity for optimization.
Despite the obvious sensitivity of iteration count to the selection of $\theta$, the tuning of $\theta$ to a particular problem setting is often neglected in practice and has not---to the best of our knowledge---been studied directly as an optimization problem.

\begin{figure}
\centering
\begin{tabular}{ccc}
	\includegraphics[width=0.28\textwidth]{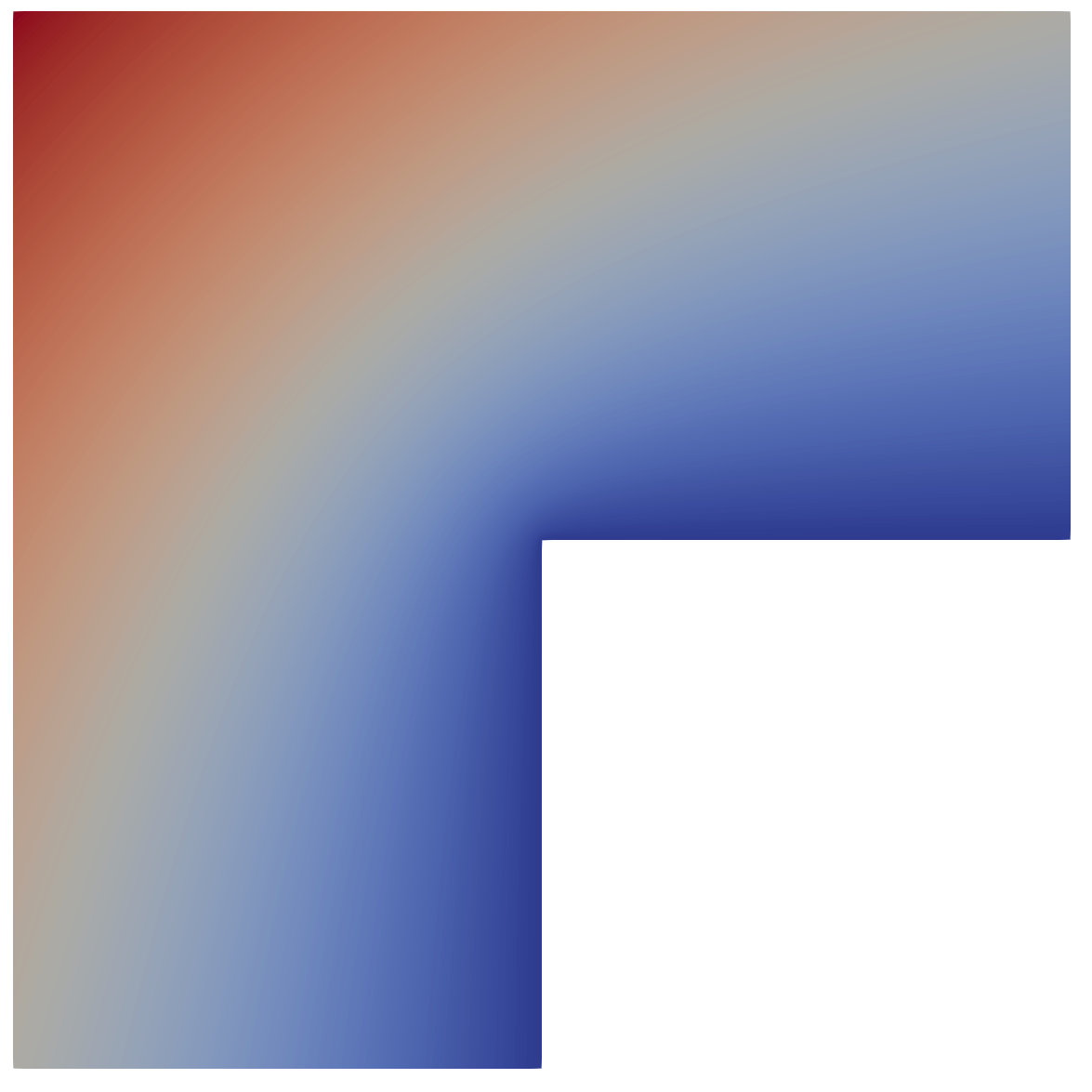} &
	\quad
	\includegraphics[width=0.28\textwidth]{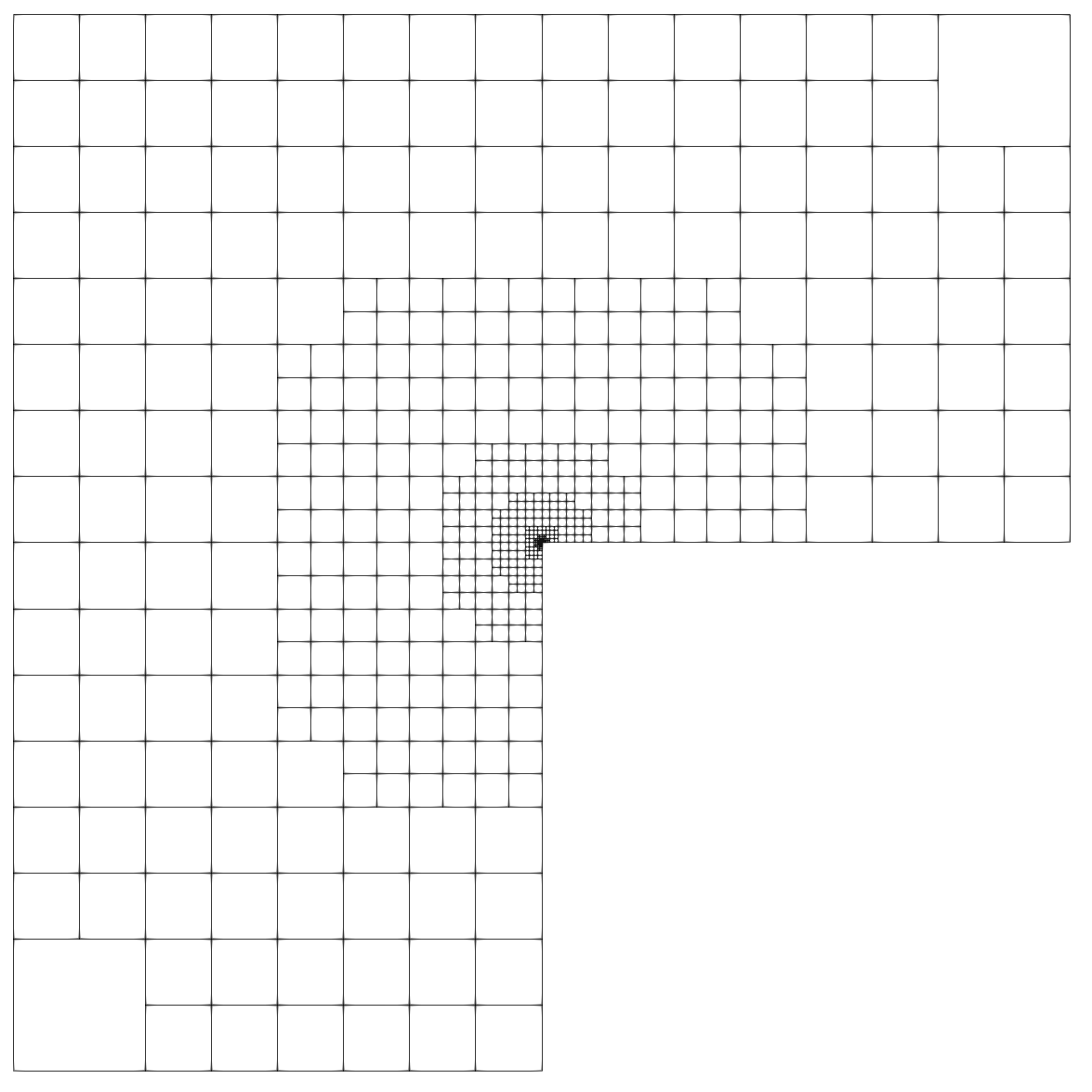}
	\quad
	&
	\includegraphics[width=0.28\textwidth]{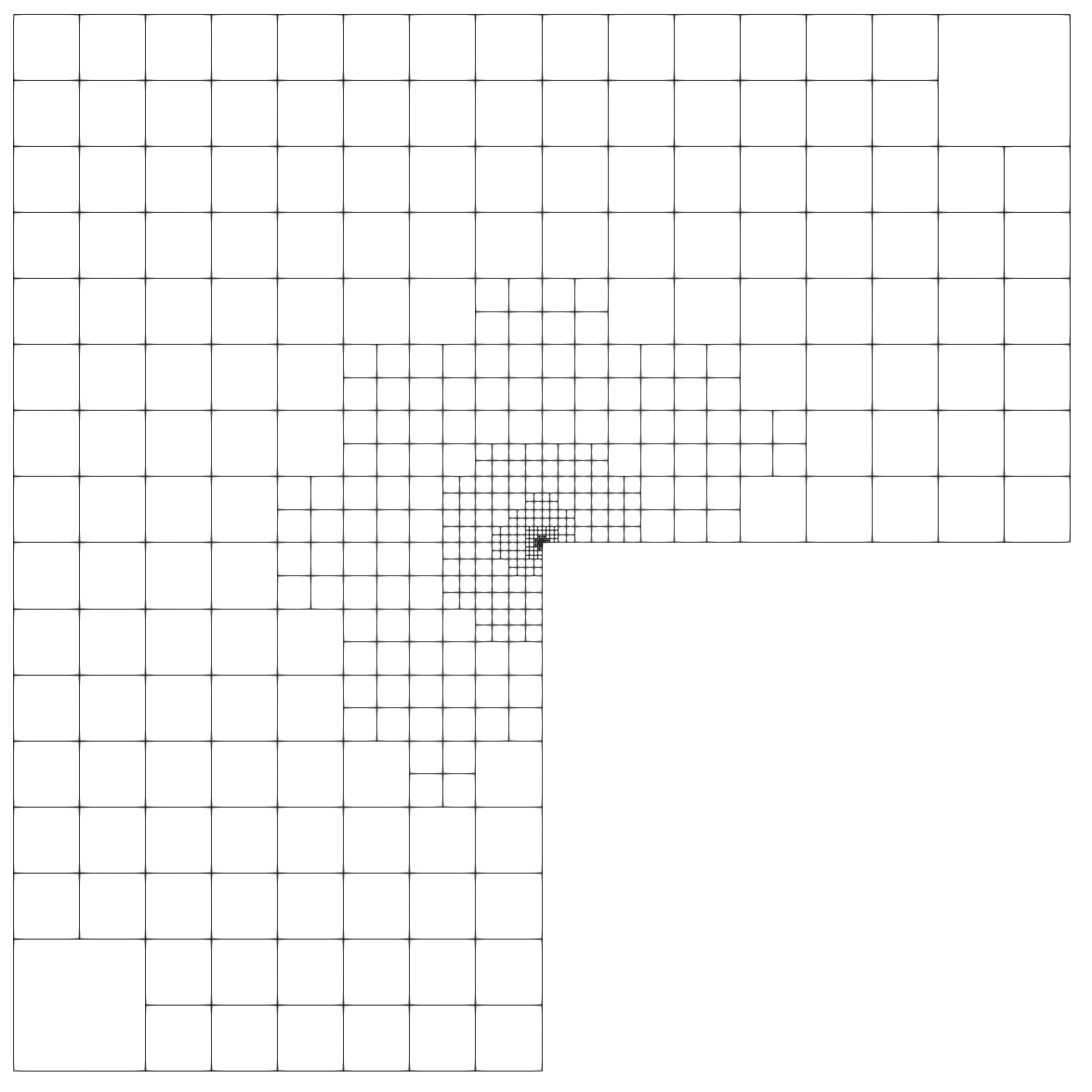}
	\\
	PDE solution & $\theta=0.1$ & $\theta=0.9$
\end{tabular}

	\caption{The solution to Laplace's equation on the L-shaped domain (left) is a classical AFEM test-bed problem.  Parameters in the \textsc{mark} step that are typically fixed by the user in a heuristic fashion can have a dramatic effect on the total computational cost required to achieve a desired accuracy. The two meshes shown correspond to different parameters for the same marking policy, yielding similar global error and similar meshes after either 11 iterations and 2441 dofs (middle) or 36 iterations and 2169 dofs (right). Automating the learning of optimal parameters for the \textsc{mark} step in~\cref{SEMR} is the primary goal of this work.
}
	\label{fig:motivation}
\end{figure} 

The example from \Cref{fig:motivation} highlights an additional axis on which the parameter $\theta$ can be optimized: each individual refinement iteration.
Tuning for a fixed choice of $\theta$ can bring some computational benefits for $h$-refinement, but allowing $\theta$ to change after each iteration opens a much broader landscape for optimization.
By searching over the entire \textit{sequence} of refinement steps used to produce a suitable adaptive mesh, we can search for total computational cost minima and final global error minima as independent objectives.
Furthermore, by introducing an independent parameter $\rho \in [0,1]$ to control the number of $p$-refinements at a given iteration, we can add an additional dimension to the space of refinement processes, enabling an even larger space of optimization possibilities for $hp$-refinement.

The space of possible $h$- or $hp$-refinement processes and the search for optimal decisions in this space is very naturally treated as a reinforcement learning (RL) problem.
At a high level, a marking policy receives \textit{state} information about the distribution of errors after an \textsc{estimate} step, as well as the remaining ``budget'' for searching, and returns an \textit{action} that controls the subsequent \textsc{mark} step.
Pictorially, 
\begin{equation}
\label{SEDMR}
\textsc{solve}~~\rightarrow~~\textsc{estimate}~~\rightarrow~~\fbox{\textsc{decide}}~~\rightarrow~~\textsc{mark}~~\rightarrow~~\textsc{refine}
\end{equation}
We implement the \textsc{decide} step by querying a trained marking policy.
During training, the policy receives a reward based on either how much the error decreased or how few dofs were added, given a particular action.
Training is effectively aiming to induce optimal behavior of a Markov decision process (a type of discrete-time stochastic control process).
Once trained, the policy can be deployed in previously unseen AFEM environments and then compared directly against other marking policies in the literature.

As we will demonstrate, trained policies of this type can produce refinement paths for AFEM processes that are (1) superior to optimal fixed-parameter marking policies on a fixed geometry, (2) robust to changes in the domain geometry, and (3) suitable for 3D geometries, even when the policy is trained exclusively in 2D settings.
\red{We motivate the selection of an observation space consisting of three quantities: a measure of ``budget'' (either the distance to a target error threshold or the cumulative degrees of freedom already employed) and normalized mean and standard deviations of error estimates at the previous step; all of the observables are trivial to compute within any existing AFEM framework.}
To be abundantly clear, we emphasize that marking policies are only involved in parameter selection; marking policies are not meant to interact in any other way with the underlying finite element code.
For the purpose of reproduction and wider adoption, this work is accompanied by an open-source Python-based implementation \cite{Code}.

\subsection{Related work} \label{sub:literature_review}

Machine learning for adaptive mesh refinement (AMR) was first studied in the early 1990s \cite{dyck1992determining}, with only a small number of subsequent contributions \cite{chedid1996automatic,manevitz2005neural} appearing in the literature until a few years ago.
Since then, attention has grown significantly \cite{bao2019data,bohn2021recurrent,chen2020output,pfaff2020learning,zhang2020meshingnet,huang2021machine,paszynski2021deep,chen2021output,chakraborty2021multigoaloriented,roth2022neural,sprave2021evaluating,patel2021smart,yang2021reinforcement,antonietti2022machine,antonietti2022refinement,twumasi2021towards}.
To date, machine learning has been used to design goal-oriented AMR strategies \cite{chen2020output,chen2021output,chakraborty2021multigoaloriented,roth2022neural}, $hp$-refinement strategies \cite{paszynski2021deep}, and refinement strategies for polygonal discretizations \cite{antonietti2022machine,antonietti2022refinement}.

The earliest data-driven approaches to AMR involved learning a mesh density function \cite{dyck1992determining,chedid1996automatic,tan2003proper} or ``areas of interest'' in the physical domain for refinement \cite{manevitz2005neural}.
This class of approaches continues to be actively developed \cite{pfaff2020learning,huang2021machine,zhang2020meshingnet,song2022m2n,chan2022locally}.
We note that density-based AMR typically relies on a down-stream mesh generator (see, e.g., \cite{dobrzynski2008anisotropic}) or some form of mesh transport (see, e.g., \cite{budd2009moving}).
Among other reasons, this makes density-based AMR fundamentally different from marked element AMR, which is the paradigm considered in this work.

The first instance of machine learning for marked element AMR that we are aware of is \cite{bohn2021recurrent}.
In \cite{bohn2021recurrent}, it is shown that error estimation (cf.~\textsc{estimate}) and element marking (cf.~\textsc{mark}) for an elliptic partial differential equation (PDE) can be represented by a recurrent neural network.

Of all of the references above, only \cite{yang2021reinforcement} characterizes AMR as a sequential decision-making problem.
\red{In particular, the authors of \cite{yang2021reinforcement} seek to learn a mesh-dependent map between approximate PDE solutions and elementwise mesh refinement decisions}.
In turn, they formulate AMR as a Markov decision process (MDP) with \emph{variable}-size state and action spaces.
The authors then propose novel policy architectures to support their unique variable-size spaces.
An additional novelty in \cite{yang2021reinforcement} is that the authors do not rely on \emph{a posteriori} error estimators from the finite element literature; \red{see, e.g., \cite{ainsworth2000aposteriori}}.
Instead, they replace the \textsc{estimate} and \textsc{mark} steps in~\cref{SEMR} with an elementwise decision based on a direct view of the solution about each element.
\red{Unfortunately, the approach is limited to refining only one element at a time~\cite[Section 7]{yang2021reinforcement}, which is impractical for most applications.}

\red{Although we also characterize AMR as a sequential decision-making problem, our approach is different from \cite{yang2021reinforcement} in numerous ways: for example, (1) we require the user to provide an \emph{a posteriori} error estimator that can deliver reliable and efficient local error estimates (cf.~\Cref{sub:marked_element_amr}); (2) our policy observes a fixed number of statistics derived from the local error estimates (cf.~\Cref{sub:observables}) and returns a fixed number of bulk refinement parameters (cf.~\Cref{sec:putting_it_all_together}); in turn, (3) we formulate AMR as an MDP with \emph{fixed}-size state and action spaces (i.e., mesh-independent); (4) we rely on policy architectures that are often used in other reinforcement learning applications and are, therefore, easy to construct and train with contemporary software libraries (cf.~\Cref{rem:SoftwareImplementation}); finally, unlike \cite{yang2021reinforcement}, (5) our approach is not limited to refining only one element at a time and, in fact, naturally supports refining any number of elements per refinement step.}
Our contributions, as well as those in \cite{yang2021reinforcement}, align with a recent trend in reinforcement learning to improve adaptive algorithms in scientific computing---for example, adaptive time-stepping for numerical integration \cite{dellnitz2021efficient} and adaptive coarsening for algebraic multigrid methods \cite{taghibakhshi2021optimization}.

\section{Preliminaries} \label{sec:preliminaries}

In this section, we introduce the fundamental concepts and basic notation used throughout the paper.
In particular, we first describe classical concepts from marked element AMR for stationary PDEs.
We then recapitulate marked element AMR as MDP.
Finally, we introduce the concept of \textit{marking policies}, which can be used to control the associated MDP.

\subsection{Marked element AMR} \label{sub:marked_element_amr}

Our focus is on AMR for PDE-based boundary value problems posed on open Lipschitz domains $\Omega\subset\R^d$, where $d=2$ or $3$.
In this work, we let $\mesh$ denote any shape regular \emph{mesh} subordinate to the domain, $\bigcup_{T\in\mesh} \overline{T} = \overline{\Omega}$ and $\bigcap_{T\in\mesh} = \emptyset$, where every \emph{element} $T\in\mesh$ is Lipschitz.

The canonical application scenario begins with an equation of the form
\begin{equation}
\label{eq:CanoncialPDE}
	\mathcal{L} u = f
	\quad \text{in } \Omega,
	\qquad
	u = 0 \quad \text{on } \partial\Omega,
\end{equation}
where $\mathcal{L} \colon V \to V^\prime$ is a bijective differential operator on some Hilbert space $V$ with norm $\|\cdot\|_V$.
A popular method to solve such equations is the finite element method \cite{ciarlet2002finite}.
This method involves constructing a discrete space $V(\mesh) \subset V$ and solving a discrete version of~\cref{eq:CanoncialPDE}:
\begin{equation}
\label{eq:DiscreteCanoncialPDE}
	\text{Find~} u_\mesh \in V(\mesh) \text{~such that}
	\quad
	\langle \mathcal{L} u_\mesh, v\rangle = \langle f, v \rangle
	\quad
	\text{for all~} v \in V(\mesh),
\end{equation}
where $\langle\cdot,\cdot\rangle$ denotes the $V^\prime \times V$ duality pairing.

In most application scenarios, we have one of two complementary objectives:
\smallskip

(i) \textbf{Optimal efficiency.}
Solve~\cref{eq:DiscreteCanoncialPDE} to a prescribed accuracy (e.g., $\|u-u_\mesh\|_V \leq \mathtt{tol}$) as efficiently as possible.
\smallskip

(ii) \textbf{Optimal accuracy.}
Solve~\cref{eq:DiscreteCanoncialPDE} to the optimal accuracy allowed by a prescribed computing budget or time constraint.
\smallskip

\noindent
Objectives (i) and (ii) are difficult to achieve because they involve the solution of an optimization problem in a complex, infinite-dimensional set of possible discretizations \cite{demkowicz1985h,demkowicz2002fully,patra2001systematic}.
Instead of trying to reach optimal efficiency or accuracy as defined by  (i) and (ii), one typically finds a satisfactory solution through an AMR process.
This type of iterative process begins with a user-defined initial mesh $\mesh_0$ that is sequentially refined, generating a sequence of meshes $\mesh_0,\mesh_1,\ldots,\mesh_k$ with improving accuracy.
The process then stops once the computing budget is exhausted or the target accuracy is reached and the convergence rate of the solution error is used to assess its effectiveness \cite{morin2002convergence,stevenson2007optimality,carstensen2014axioms}.
A benefit of the RL approach employed here is that we can directly address (i) and (ii), while still adhering to the traditional marked AMR process described above.

Hereafter, we assume that~\cref{eq:DiscreteCanoncialPDE} admits a global relative error estimator
\begin{equation*}
	\eta_\mesh = \sqrt{\sum_{T\in \mathcal{T}} \eta_T^2} \approx \|u - u_\mesh\|_V/\|u\|_V,
\end{equation*}
where $\eta_T$ denotes the local error estimator applied to a single element $T\in\mathcal{T}$.
\rev{We require a relative error estimator to normalize the errors over trivial length scale changes of the mesh geometry.}
After constructing such an estimator, we must also select a marking rule.
Two common choices are the greedy and D\"orfler marking rules \cite{dorfler1996convergent}.
Both of these choices are parameterized by a bulk parameter $\theta \in [0,1]$ that determines the how conservative the refinement will be ($\theta = 1$ being the most conservative).
In a greedy rule, we refine all elements $T \in\mathcal{T}$ satisfying
\begin{equation}
\label{eq:GreedyMeshRefinement}
      \theta\cdot\max_{S\in\mathcal{T}}\{\eta_{S}\} \leq \eta_T
  .
\end{equation}
In a D\"orfler rule, we refine a minimal cardinality subset of elements $\mathcal{S} \subset \mathcal{T}$ satisfying
\begin{equation}
    \label{eq:DorflerMeshRefinement}
  \theta\cdot\sum_{T\in \mathcal{T}} \eta_T^2
  \leq
  \sum_{S\in \mathcal{S}} \eta_S^2
  .
\end{equation}

We now state the standard marked element AMR algorithm for achieving a target error estimate $\eta_\infty > 0$; cf. objective (i).
Here and onward, we denote $\eta_k = \eta_{\mesh_k}$ and $u_k = u_{\mesh_k}$.
An example follows immediately afterward.

\begin{algorithm2e}[H]
\DontPrintSemicolon
	\caption{\label{alg:BasicAMR} Marked element AMR with a target error estimate.}
	\SetKwInOut{Input}{input}
	\SetKwInOut{Output}{output}
	\SetKw{Break}{break}
	\Input{Initial mesh $\mesh_0$, fixed parameter $\theta \in (0,1)$, target error estimate $\eta_\infty > 0$.}
	\Output{Discrete solution $u_k$.}
	$k \leftarrow 0$.\;
	\While{$\eta_k > \eta_\infty$}
	{
		Solve~\cref{eq:DiscreteCanoncialPDE} with $\mesh = \mesh_k$. \tcp*[l]{\textsc{solve}~~~~~~}
		Compute error estimates $\{\eta_T\}_{T\in\mesh_k}$. \tcp*[l]{\textsc{estimate}}
		Mark all $T\in\mesh_k$ satisfying~\cref{eq:GreedyMeshRefinement} (or~\cref{eq:DorflerMeshRefinement}). \tcp*[l]{\textsc{mark}}
		Form $\mesh_{k+1}$ by refining all marked elements in $\mesh_k$. \tcp*[l]{\textsc{refine}}
		$k \leftarrow k+1$.\;
	}
\end{algorithm2e}

\begin{example}
\label{ex:ZZ}
Assume that $\mesh$ is made up of simplices and, for every $T\in\mesh$, denote the space of polynomials of degree less than or equal to $p$ by $P_{p}(T)$.
We may now consider the classical order-$p$ finite element variational formulation of the Poisson equation $-\Delta u = f$ with homogeneous Dirichlet boundary conditions:
\begin{equation}
\label{eq:FEMPoisson}
	\text{Find~} u_\mesh \in V_{p}(\mesh) \text{~such that}
	\quad
	\int_\Omega \nabla u_\mesh\cdot\nabla v \dd x = \int_\Omega f v \dd x
		\quad
	\text{for all~} v \in V_{p}(\mesh),
\end{equation}
where $V_{p}(\mesh) = \{ v \in H^1_0(\Omega) \,\colon\, v|_T \in P_{p}(T)~ \forall\, T\in\mesh\}$.
In all numerical experiments in this work, we utilize the Zienkiewicz--Zhu error estimator \cite{zienkiewicz1992superconvergent1,zienkiewicz1992superconvergent2} for~\cref{eq:FEMPoisson}.
However, there are numerous other equally well-qualified candidates in the literature that one may use instead \cite{ainsworth2000aposteriori}.
\end{example}

\subsection{AMR as a Markov process} \label{sub:episodic_environments}

The remainder of this section is centered on characterizing an optimal marking policy $\pi$ through which a new value of $\theta$ in~\cref{eq:GreedyMeshRefinement} (or~\cref{eq:DorflerMeshRefinement}) can be selected after every AMR iteration.
The key to our approach is to identify every \textsc{solve}--\textsc{estimate}--\textsc{mark}--\textsc{refine} (SEMR) iteration of \Cref{alg:BasicAMR} (cf.~\cref{SEMR}) with the transition of an unknown \emph{Markov process} that advances the current state of the discretization to a new state with a transition probability dependent on $\theta$.
This stochastic perspective is appealing in part because it allows us to characterize a \emph{robust} marking policy that performs well on a distribution of PDEs (cf.~\Cref{ssub:ex2c,ssub:ex_transfer}).

The SEMR process in \Cref{alg:BasicAMR} can be steered to even more computationally efficient results by modifying the choice of $\theta$ at each iteration $k$.
Doing so is equivalent to enacting an ``Adaptive Marking AMR'' process, which we will denote by (AM)$^2$R.
The method proceeds via a \textsc{solve}--\textsc{estimate}--\textsc{decide}--\textsc{mark}--\textsc{refine} (SEDMR) loop that we will now construct (cf.~\cref{SEDMR} and~\Cref{fig:flowchart}).
Since we are permitted a parameter decision before each state transition (i.e., refinement), SEDMR is an example of a discrete-time stochastic control process called a \emph{Markov decision process} (MDP) \cite{sutton2018reinforcement}.

\subsection{Marking policies as probability distributions} \label{sub:refinement_policies}

A marking policy can be any map between a set of system observables $\mathcal{O}$ and refinement actions $\mathcal{A}$.
However, experience from the reinforcement learning literature indicates several advantages of defining the observable-to-action map through a probability distribution \cite{sutton2018reinforcement}.
In turn, we define a \textit{marking policy} to be a family of probability distributions $\pi\colon \mathcal{O} \times \mathcal{A} \to [0,1]$ from which we can sample the bulk parameter $\theta \sim \pi(\theta|o)$ for any state observation $o\in\mathcal{O}$.

The most important reason to define a marking policy as a probability distribution is that it enables the use of stochastic algorithms for optimizing the associated MDP; cf.~\Cref{sec:putting_it_all_together}.
Furthermore, the distribution perspective provides a way to encode multivalued maps between $\mathcal{O}$ and $\mathcal{A}$, which are helpful when the observation space is not rich enough to encode all state information.

In the context of marking rules like \cref{eq:DorflerMeshRefinement,eq:GreedyMeshRefinement}, the natural \emph{action space} $\mathcal{A}$ for AMR is the set of all admissible bulk parameters $\theta$; that is, $\mathcal{A} = [0,1]$.
Unlike this clearly defined action space, we are free to incorporate any features we deem important to the PDE 
discretization into the definition of the \emph{observation space} $\mathcal{O}$.
For example, any subset of the physical or geometry parameters of the underlying PDE could be used to define $\mathcal{O}$.
Such a choice may be helpful to arrive at a policy appropriate for a range of PDEs.
In this work, we focus on a more generic observation space derived solely from the local error estimates and refinement objective.
In order to focus now on more general aspects of the policy $\pi$, we defer the precise description of our observation space to \Cref{sub:observables}.

\subsection{From problem statements to algorithms} \label{sub:optimality}

By changing the value of $\theta = \theta_k$ within every SEDMR iteration $k$, we seek to induce a doubly adaptive refinement process that is optimal with respect to some prescribed objective function.
In problems (i) and (ii), the objective is to minimize either the final error estimate $\eta_k$ or some surrogate of the total simulation cost $J_k$.
We rewrite these problems as follows, \smallskip

\textbf{Efficiency problem.}
Given the target error estimate $\eta_\infty>0$, seek
\begin{equation}
\label{eq:err_thresh}
	\min_{\pi,k}~ \bbE_{\theta\sim\pi}\big[ \log_2 J_k\big]
	\quad
	\text{subject to~}
	\eta_k \leq \eta_\infty
	~\text{a.s.}
	\end{equation}

\textbf{Accuracy problem.}
Given the computational budget $J_\infty>0$, seek
\begin{equation}
\label{eq:dof_thresh}
	\min_{\pi,k}~ \bbE_{\theta\sim\pi}\big[\log_2 \eta_k\big]
	\quad
	\text{subject to~}
	J_k \leq J_\infty
	~\text{a.s.}
	\end{equation}
In the two problems above, $\bbE_{\theta\sim\pi}[X(\theta)]$ denotes the expected value of the random variable $X(\theta)$ when $\theta$ is drawn from $\pi$ and a.s. stands for ``almost surely'' with respect to the probability measure defining the expected value.

We are free to define $J_k$ as we choose; however, one basic principle is that the cost should depend on the entire refinement history, $J_k = J(\mesh_0,\ldots,\mesh_k)$.
The most direct measures of cost may be the cumulative time-to-solution or the cumulative energy expended.
Both of these are stochastic quantities that are difficult to estimate precisely.
In this work, we use the cumulative degrees of freedom (dofs) to define the cost function.
More precisely,
\begin{equation}
	J_k
	:=
	\sum_{i=0}^k \ndofs(\mesh_i)
	,
\end{equation}
where $\ndofs(\mesh)$ is the number of dofs in the discrete space $V(\mesh)$.
If the PDE solver scales linearly with number of dofs and the overhead costs of assembly and refinement are neglected, we believe this is a reasonable surrogate for simulation cost.
Future work may be devoted to designing optimal policies based on other cost functions.

\Cref{alg:AM2R_1,alg:AM2R_2} describe the (AM)$^2$R process, as applied to the efficiency problem and accuracy problem, respectively.
In~\Cref{sec:putting_it_all_together}, we describe how to optimize for the scalar outputs of these process; i.e., the cost $J_k$ or final global error estimate $\eta_k$.

\begin{algorithm2e}
\DontPrintSemicolon
	\caption{\label{alg:AM2R_1} (AM)$^2$R with a target error estimate.}
	\SetKwInOut{Input}{input}
	\SetKwInOut{Output}{output}
	\SetKw{Break}{break}
	\Input{Initial mesh $\mesh_0$, marking policy $\pi$, target error estimate $\eta_\infty > 0$.}
	\Output{Discrete solution $u_k$, cost $J_k$.}
	$k \leftarrow 0$.\;
	\While{$\eta_k > \eta_\infty$}
	{
		Solve~\cref{eq:DiscreteCanoncialPDE} with $\mesh = \mesh_k$. \tcp*[l]{\textsc{solve}~~~~~~}
		Compute error estimates $\{\eta_T\}_{T\in\mesh_k}$. \tcp*[l]{\textsc{estimate}}
		Sample $\theta_k \sim \pi(\cdot|o_k)$. \tcp*[l]{\textsc{decide}}
		Mark all $T\in\mesh_k$ satisfying~\cref{eq:GreedyMeshRefinement} or~\cref{eq:DorflerMeshRefinement} with $\theta = \theta_k$. \tcp*[l]{\textsc{mark}}
		Form $\mesh_{k+1}$ by refining all marked elements in $\mesh_k$. \tcp*[l]{\textsc{refine}}
		$k \leftarrow k+1$.\;
	}
\end{algorithm2e}

\begin{algorithm2e}
\DontPrintSemicolon
	\caption{\label{alg:AM2R_2} (AM)$^2$R with a computational budget constraint.}
	\SetKwInOut{Input}{input}
	\SetKwInOut{Output}{output}
	\SetKw{Break}{break}
	\Input{Initial mesh $\mesh_0$, marking policy $\pi$, computational budget $J_\infty > 0$.}
	\Output{Discrete solution $u_k$, error estimate $\eta_k$.}
	$k \leftarrow 0$.\;
	\While{$J_k < J_\infty$}
	{
		Solve~\cref{eq:DiscreteCanoncialPDE} with $\mesh = \mesh_k$. \tcp*[l]{\textsc{solve}~~~~~~}
		Compute error estimates $\{\eta_T\}_{T\in\mesh_k}$. \tcp*[l]{\textsc{estimate}}
		Sample $\theta_k \sim \pi(\cdot|o_k)$. \tcp*[l]{\textsc{decide}}
		Mark all $T\in\mesh_k$ satisfying~\cref{eq:GreedyMeshRefinement} or~\cref{eq:DorflerMeshRefinement} with $\theta = \theta_k$. \tcp*[l]{\textsc{mark}}
		Form $\mesh_{k+1}$ by refining all marked elements in $\mesh_k$. \tcp*[l]{\textsc{refine}}
		$k \leftarrow k+1$.\;
	}
\end{algorithm2e}

\subsection[Defining an observation space for h-refinement]{Defining an observation space for $h$-refinement} \label{sub:observables}

When designing a marking policy, it is limiting to focus only on optimality over the problems seen during training.
In the context of PDE discretizations especially, it is more useful to attain a generalizable or robust policy that can provide competitive performance on problems outside the training set.
To allow such generalizability, the observation space $\mathcal{O}$ must be defined so that the policy can be applied to a large category of target problems.
This may preclude defining $\mathcal{O}$ with, e.g., geometric features of the domain such as values at interior angles or control points because, in that case, the trained policy could not be applied to domains that do not have an analogous feature set.

In this work, we choose to define $\mathcal{O}$ using only variables that appear in~\Cref{alg:AM2R_1,alg:AM2R_2}.
More specifically, we define the observation space in terms of the individual error estimates $\{\eta_T\}_{T\in\mesh_k}$ and the cost $J_k$.
As a first pass, one might consider including all possible lists of error estimates $\eta_T$ in the observation space, but this presents an immediate challenge due to the varying length of such lists over the course of the AFEM MDP.
Instead, we choose to observe statistics derived from the local error estimates.
In addition, we choose to observe the proximity to a target global error estimate or cumulative degree of freedom threshold.

The proximity observables are easy to define.
In~\Cref{alg:AM2R_1}, the loop ends when the target error is reached.
Therefore, in order to keep track of how far we are from the end of the refinement process when solving the efficiency problem~\cref{eq:err_thresh}, we include the relative distance to the target error,
\begin{equation}
\label{eq:EfficiencyBudget}
	b_k = {\eta_\infty}/{\eta_k},
	\end{equation}
in the observation space.
Alternatively, in~\Cref{alg:AM2R_2}, the loop ends when the computational budget is exhausted.
Therefore, when solving the accuracy problem~\cref{eq:dof_thresh}, we include the relative budget,
\begin{equation}
\label{eq:AccuracyBudget}
	b_k = {J_k}/{J_\infty},
	\end{equation}
in the observation space.

The statistics of $\eta_T$ that we choose to observe are more complicated to define and the remainder of this subsection is devoted to motivating our choices.
We begin by defining the \textit{empirical mean} of any element-indexed set $\{x_T\in\R~\colon~T\in\mesh_k\}$, written
\begin{equation}
	\mean_k[x_T] = \frac{1}{|\mesh_k|} \sum_{T\in\mesh_k} x_T,
\end{equation}
where $|\mesh_k|$ denotes the number of elements in the mesh $\mesh_k$.
The corresponding \textit{empirical variance} is defined as
\begin{equation}
\label{eq:VarianceDefinition}
	\variance_k[x_T]
	=
	\mean_k\big[(x_T - \mean_k[x_T])^2\big],
		\end{equation}
and, in turn, the \textit{empirical standard deviation} is defined
$\sd_k[x_T] = \sqrt{\variance_k[x_T]}$.
Finally, we define the \textit{root mean square} of $\{x_T\}$ to be
\begin{equation}
	\rms_k[x_T]
 	=
 	\sqrt{\mean_k[x_T^2]}
 	.
\end{equation}
Note that one may rewrite $\variance_k[x_T] = \mean_k[x_T^2] - \mean_k[x_T]^2$ and thus see that
\begin{equation}
\label{eq:BoundOnSD}
	\sd_k[x_T] \leq \rms_k[x_T]
	,
\end{equation}
with equality if and only if $\mean_k[x_T] = 0$.

The main challenge of defining an appropriate statistic of $\eta_T$ is ensuring that the statistic is properly normalized with respect to the number of elements and degrees of freedom in the discretization.
To illustrate this challenge, we consider the error in~\cref{eq:FEMPoisson}, namely $u-u_k$.
A straightforward computation shows that
\begin{equation}
\label{eq:Normalization1}
	\mean_k\big[\|\nabla(u-u_k)\|_{L^2(T)}^2\big]
	=
	\frac{1}{|\mesh_k|}\|\nabla(u-u_k)\|_{L^2(\Omega)}^2
		.
\end{equation}
If $u$ is sufficiently smooth and mild assumptions on the problem context are satisfied~\cite{lin2014lower}, then for a uniform $p$-order finite element discretization undergoing uniform $h$-refinements, there exist constants $C_0$ and $C_1$ depending on $u$ but independent of $\mathcal{T}_k$ such that
\begin{equation}
\label{eq:Normalization2}
	C_0 \ndofs(\mesh_k)^{-p/d} \leq \|\nabla(u-u_k)\|_{L^2(\Omega)} \leq C_1 \ndofs(\mesh_k)^{-p/d}.
\end{equation}
Together,~\cref{eq:Normalization1,eq:Normalization2} deliver uniform upper and lower bounds on the root mean square of a normalized distribution of local errors, i.e., for any mesh $\mathcal{T}_k$,
\begin{equation}
\label{eq:UniformBounds}
	C_0 \leq \rms_k[ \widetilde{e}_T] \leq C_1
		,
\end{equation}
where $\widetilde{e}_T = |\mesh_k|^{1/2} \ndofs(\mesh_k)^{p/d} \|\nabla(u-u_k)\|_{L^2(T)}$.
Typically, we do not have access to the true local errors.
However, one may derive similar uniform bounds on the error estimates given the assumption $\eta_k \approx \|\nabla(u-u_k)\|_{L^2(\Omega)}/\|\nabla u_k\|_{L^2(\Omega)}$.
This leads us to consider the following normalized local error estimates:
\begin{equation}
\label{eq:NormalizedErrorEstimates}
	\widetilde{\eta}_T
	=
		|\mesh_k|^{1/2} \ndofs(\mesh_k)^{p/d} \eta_T
	.
\end{equation}

It is instructive to reflect on~\cref{eq:UniformBounds} and see that, if the error estimate converges optimally (cf.~\cref{eq:Normalization2}), then the root mean square of $\widetilde{\eta}_T$ remains bounded.
Under the same assumption, the standard deviation of $\widetilde{\eta}_T$ is bounded due to~\cref{eq:BoundOnSD}.
This observation is summarized in~\Cref{prop:VarianceOfEta}.

\begin{proposition}
\label{prop:VarianceOfEta}
	If there exists a constant $C$ such that
	\begin{equation}
	\label{eq:UpperBoundAssumption}
		\eta_k
		\leq
		C \ndofs(\mesh_k)^{-p/d} 
	\end{equation}
	then for all $T$ in $\mathcal{T}_k$
	\begin{equation}
	\label{eq:BoundedVariance}
		\sd_k[\widetilde{\eta}_T]
		\leq
		\rms_k[\widetilde{\eta}_T]
		\leq
		C
		.
	\end{equation}
\end{proposition}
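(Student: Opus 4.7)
The plan is to establish the two inequalities in \cref{eq:BoundedVariance} separately, both via direct calculation from the definitions.

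First I would handle the upper bound $\rms_k[\widetilde{\eta}_T] \leq C$. Starting from the definition of the root mean square and substituting the normalized local error estimates from \cref{eq:NormalizedErrorEstimates}, I would compute
\begin{equation*}
    \rms_k[\widetilde{\eta}_T]^2
    = \mean_k[\widetilde{\eta}_T^2]
    = \frac{1}{|\mesh_k|}\sum_{T\in\mesh_k} |\mesh_k|\,\ndofs(\mesh_k)^{2p/d}\,\eta_T^2
    = \ndofs(\mesh_k)^{2p/d}\sum_{T\in\mesh_k}\eta_T^2.
\end{equation*}
Recognizing the remaining sum as $\eta_k^2$ (by the definition of the global error estimator given immediately before \Cref{alg:BasicAMR}) yields $\rms_k[\widetilde{\eta}_T] = \ndofs(\mesh_k)^{p/d}\,\eta_k$. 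The hypothesis \cref{eq:UpperBoundAssumption} then gives $\rms_k[\widetilde{\eta}_T] \leq \ndofs(\mesh_k)^{p/d}\cdot C\,\ndofs(\mesh_k)^{-p/d} = C$, as desired.

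The bound $\sd_k[\widetilde{\eta}_T] \leq \rms_k[\widetilde{\eta}_T]$ is already supplied as \cref{eq:BoundOnSD} in the text (applied to $x_T = \widetilde{\eta}_T$); chaining the two inequalities yields \cref{eq:BoundedVariance} and completes the proof.

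There is no real obstacle here: the proof is a one-line algebraic collapse once the normalization factor $|\mesh_k|^{1/2}\,\ndofs(\mesh_k)^{p/d}$ in \cref{eq:NormalizedErrorEstimates} is unpacked — the factor of $|\mesh_k|$ was deliberately chosen to cancel the $1/|\mesh_k|$ from the empirical mean, and the factor $\ndofs(\mesh_k)^{p/d}$ was chosen to cancel the assumed optimal decay rate. The only thing worth noting conceptually is that the argument does not require the usual two-sided bound \cref{eq:Normalization2} on the error itself, only the upper bound on the global estimator $\eta_k$; this makes the proposition immediately applicable whenever the AFEM process delivers optimal (or better than optimal) convergence rates.
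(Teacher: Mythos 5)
Your proof is correct and follows essentially the same route as the paper's: the paper likewise invokes \cref{eq:BoundOnSD} for the first inequality and derives the second from the identity $|\mesh_k|^{1/2}\rms_k[\eta_T] = \eta_k$ (which is precisely your algebraic collapse) combined with \cref{eq:UpperBoundAssumption} and \cref{eq:NormalizedErrorEstimates}. Your write-up simply makes explicit the cancellation that the paper leaves as a "straightforward identity."
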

\begin{proof}
	The first inequality in~\cref{eq:BoundedVariance} is an immediate consequence of~\cref{eq:BoundOnSD}.
	The second inequality follows from the straightforward identity $|\mesh_k|^{1/2}\rms_k[\eta_T] = \eta_k$, assumption~\cref{eq:UpperBoundAssumption}, and definition~\cref{eq:NormalizedErrorEstimates}.
\end{proof}

If the global error estimate does not converge optimally, then neither the standard deviation nor the root mean square of $\widetilde{\eta}_T$ is guaranteed to be bounded, as we now demonstrate by example.
In~\Cref{fig:error-dist-href}, the empirical distribution of $\widetilde{\eta}_T$ is plotted for four discretized model Poisson problems undergoing $h$-refinement.
In the middle row of plots, the discretizations are enriched through uniform $h$-refinement.
When the solution is infinitely smooth, $u \in C^\infty(\overline{\Omega})$, we witness that the distribution of $\widetilde{\eta}_T$ converges after only a few refinements.
However, when the solution has only finite regularity---as in the canonical singular solution on the L-shaped domain, where $u \in H^{s}(\Omega)$, $1\leq s < 3/2$, $u \not\in H^{3/2}(\Omega)$---only the median of $\widetilde{\eta}_T$ appears to converge while the mean and variance diverge exponentially.
In contrast, the mean and variance remain bounded under AMR for both regularity scenarios, as evidenced by the bottom row of~\Cref{fig:error-dist-href}, which employs $h$-refinement via~\Cref{alg:BasicAMR} using the greedy marking strategy~\cref{eq:GreedyMeshRefinement} and $\theta = 0.5$.

A heuristic interpretation of the diverging local error distribution in the uniform refinement, L-shaped domain case from~\Cref{fig:error-dist-href} is found through the concept of error equidistribution \cite{demkowicz2002fully}, which the standard deviation of $\widetilde{\eta}_T$ allows us to quantify.
In some sense, for an ``ideal'' mesh, all elements will have precisely the same local error.
This is equivalent to having zero empirical variance in the local error distribution.
On the other hand, when the local errors vary wildly, the variance of $\widetilde{\eta}_T$ will be accordingly large.
Because uniform refinement is suboptimal when $u$ is singular \cite{dorfler1996convergent}, the local errors become less equally distributed after every refinement.
In other words, this suboptimal refinement process \textit{causes} the variance of $\widetilde{\eta}_T$ to grow with $k$.

\begin{figure}
\centering
\begin{tabular}{lcc}
{\rotatebox[origin=l]{90}{~~~Initial mesh}}
				& \includegraphics[width=.25\linewidth]{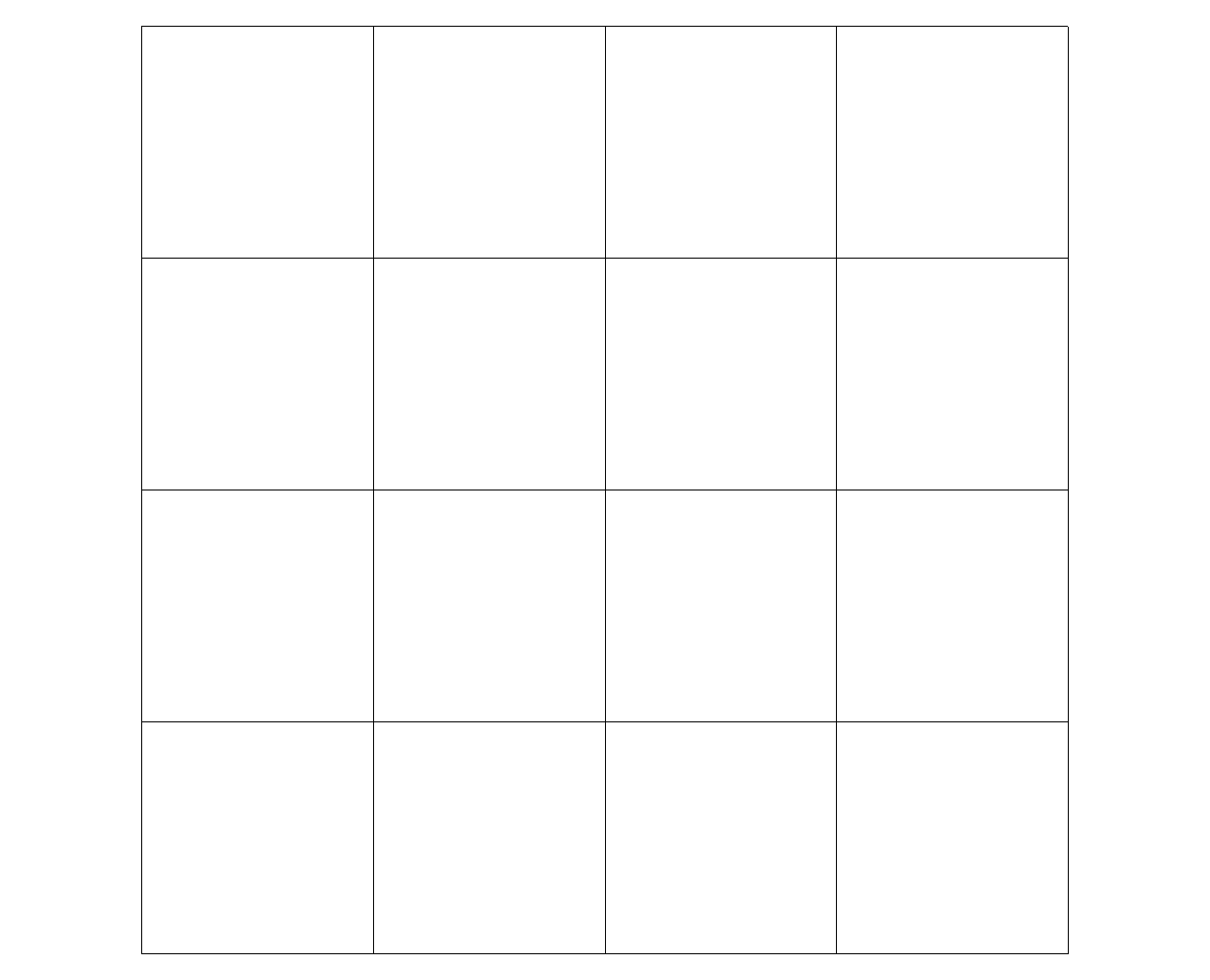} 
	& \includegraphics[width=.25\linewidth]{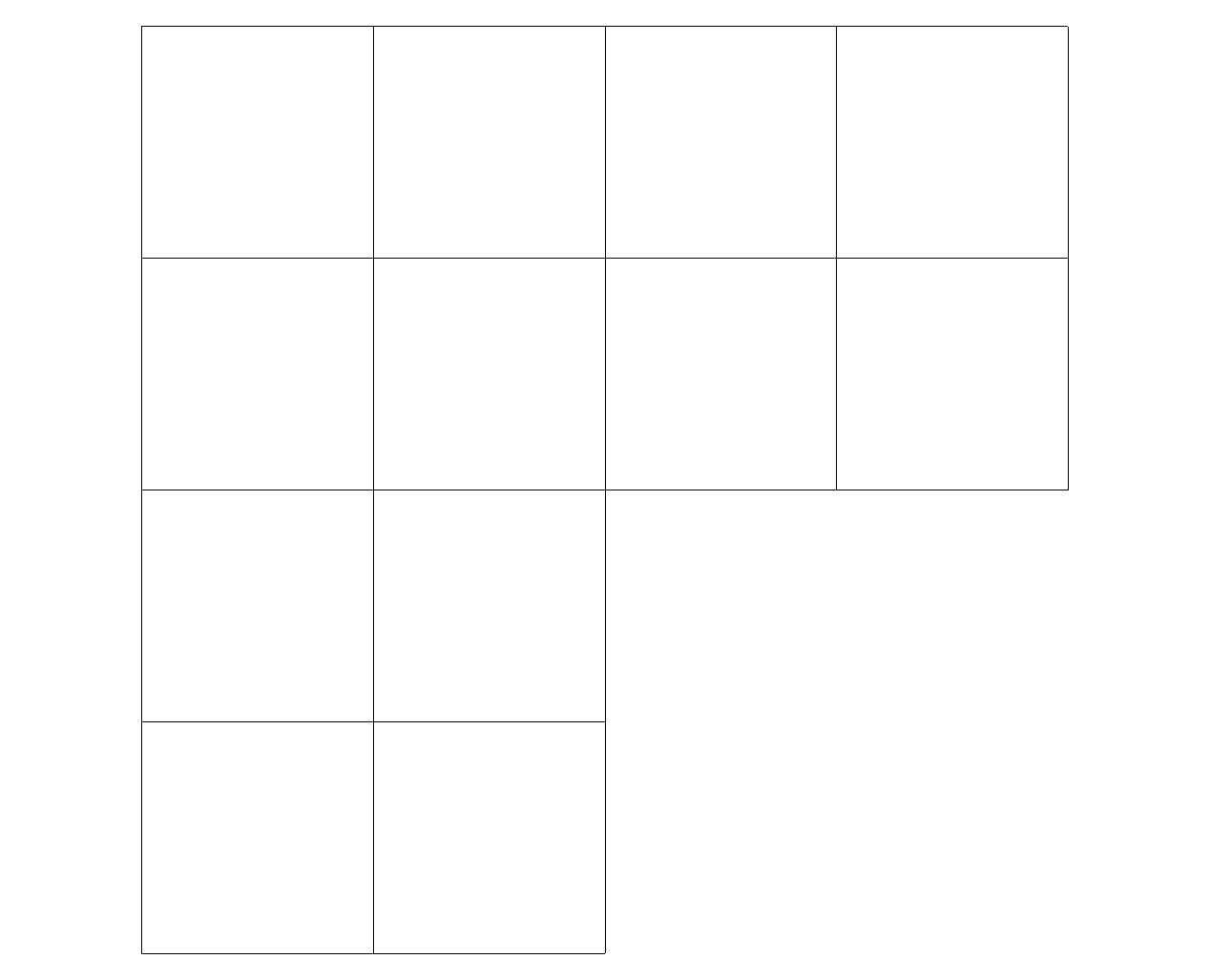} \\
{\rotatebox[origin=l]{90}{~~~Uniform $h$-ref}}
	& \includegraphics[width=.32\linewidth]{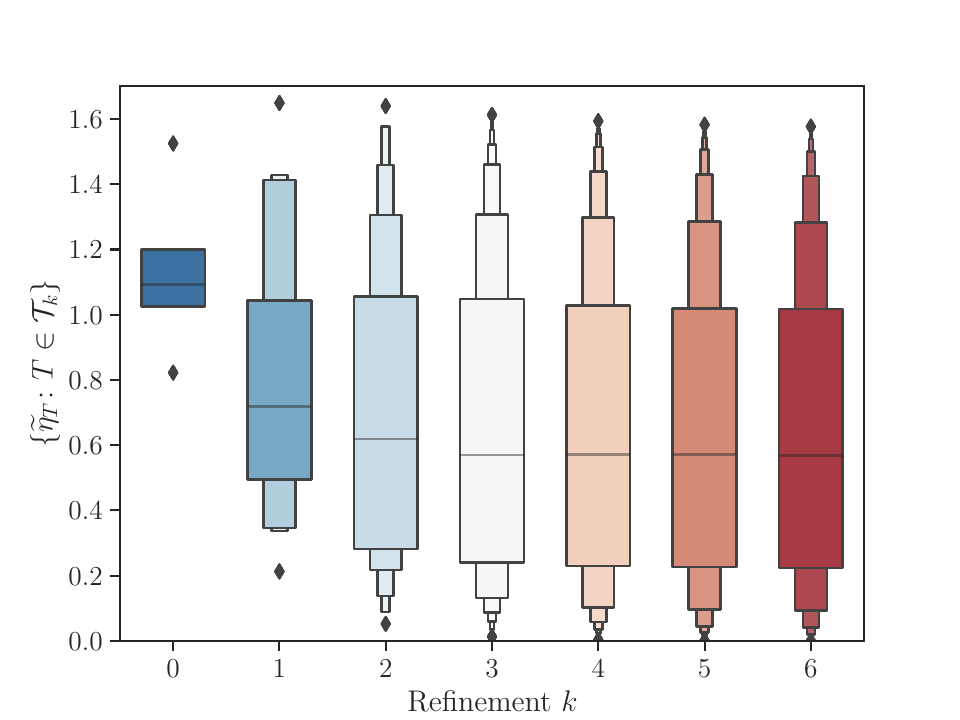}
	& \includegraphics[width=.32\linewidth]{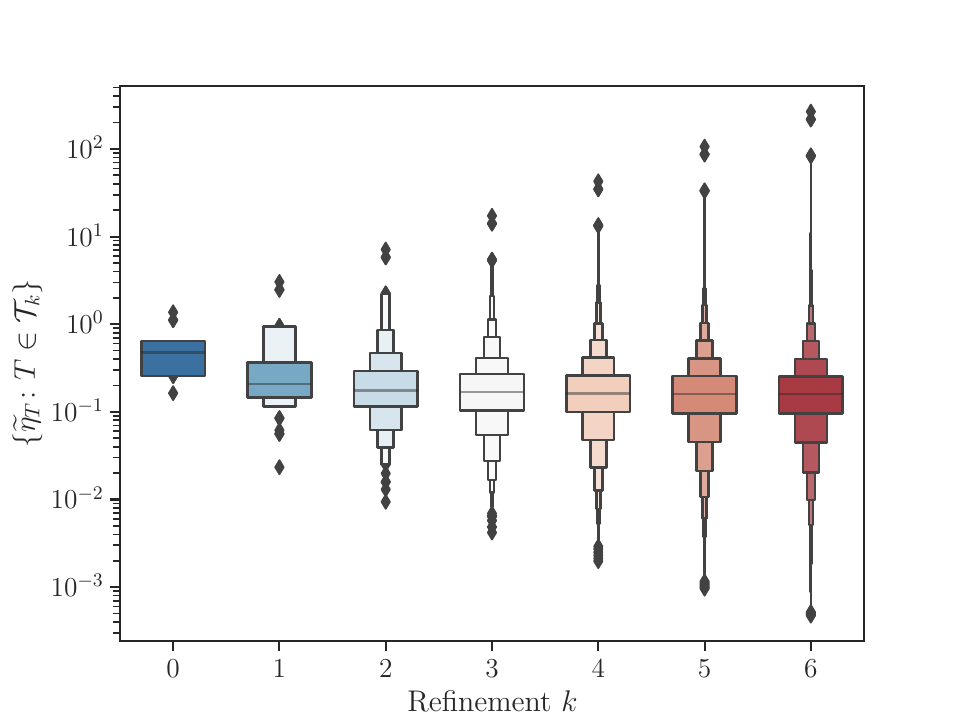} \\
{\rotatebox[origin=l]{90}{~~~Adaptive $h$-ref}}
	& \includegraphics[width=.32\linewidth]{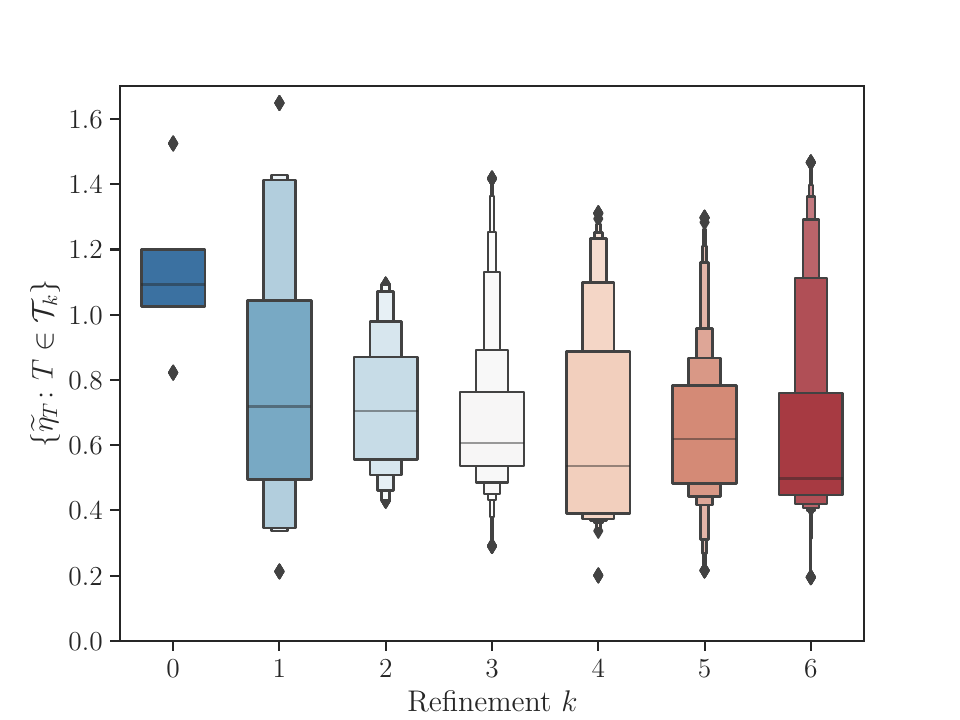} 
	& \includegraphics[width=.32\linewidth]{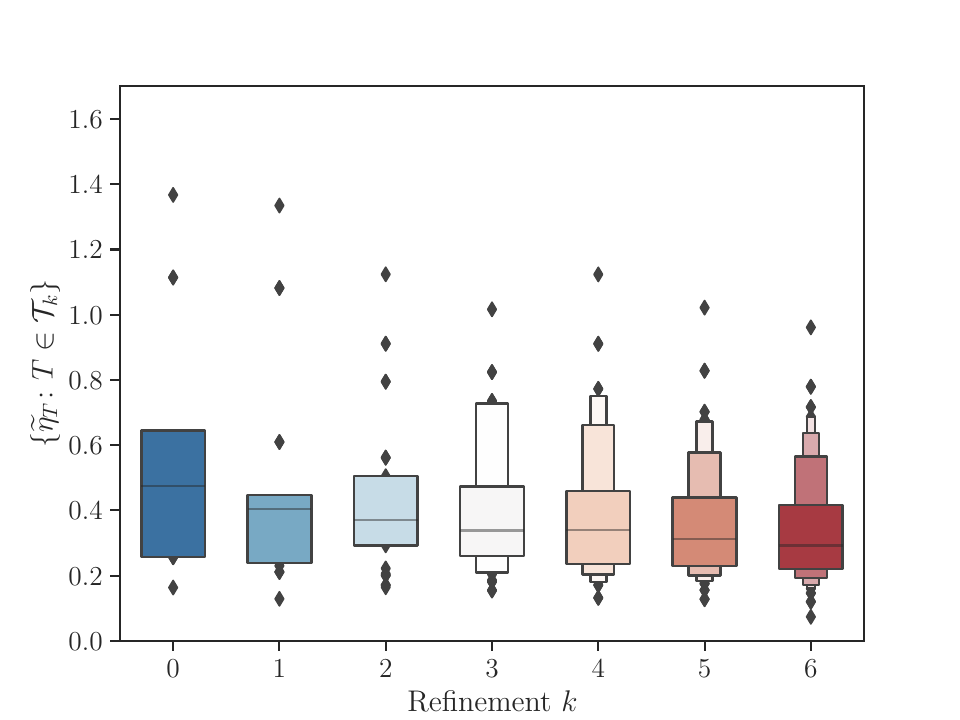} \\[1mm]
	& $u=\sin(\pi x)\sin(\pi y)$ & $u=r^{2/3}\sin(2\theta/3)$
	\end{tabular}
	\caption{\label{fig:Distribution} Empirical distributions of the normalized local error estimates $\widetilde{\eta}_T$ \cref{eq:NormalizedErrorEstimates} under uniform $h$-refinement (middle row) and standard adaptive $h$-refinement using \Cref{alg:BasicAMR} with $\theta = 0.5$ (bottom row) for two Poisson problems (cf.~\cref{eq:FEMPoisson}) with $p=1$.
	\textit{Left column:} The difference in distributions between uniform and adaptive $h$-refinement for the infinitely smooth solution $u=\sin(\pi x)\sin(\pi y)$ over the unit square is noticeable but not significant. 
	\textit{Right column:} For the canonical L-shaped domain problem, uniform $h$-refinement results in exponentially divergent mean and variance, while AMR keeps both controlled.
	This observation guides the design of our RL training regimen for the \textsc{decide} step.
}
\label{fig:error-dist-href}
\end{figure}

We can now formulate our first observation space (an alternative observation space for $hp$-refinement is proposed in~\Cref{sec:extension_to_hp-refinement}).
As motivated previously, there is an intuitive reason to observe the proximity variable $b_k$ corresponding to either~\cref{eq:EfficiencyBudget} or~\cref{eq:AccuracyBudget}, depending on if the efficiency problem or the accuracy problem is being solved, respectively.
Likewise, it is convenient to observe the root mean square of $\widetilde{\eta}_T$, due to its connection to the convergence constants $C_0$ and $C_1$ in~\cref{eq:UniformBounds}, and the standard deviation of $\widetilde{\eta}_T$, due to its connection to the error equidistribution.
Therefore, we choose to observe some combination of $b_k$, $\rms_k[\widetilde{\eta}_T]$, and $\sd_k[\widetilde{\eta}_T]$.

In our experiments, we tried multiple combinations of these variables but settled on the following formulation: $\mathcal{O}_h = [0,1] \times [0,\infty) \times [0,\infty)$, where each $o_k \in \mathcal{O}_h$ is defined by
\begin{equation}
\label{eq:hobsspace}
	o_k
	=
	\left(~b_k,~~\log_2(1 + \rms_k[\widetilde{\eta}_T]),~~\log_2(1 + \sd_k[\widetilde{\eta}_T])~\right)
	.
\end{equation}
The logarithms in the second and third components of $o_k$ exist for numerical stability.
Recalling the fact that $\rms_k[\widetilde{\eta}_T]$ and $\sd_k[\widetilde{\eta}_T]$ may diverge exponentially (cf.~\Cref{fig:error-dist-href}), we found it more numerically stable to observe logarithms of $\rms_k[\widetilde{\eta}_T]$ and $\sd_k[\widetilde{\eta}_T]$ rather than their direct values.
We admit our definition of $\mathcal{O}_h$ is ad hoc and encourage future research to explore the benefits of other combinations of these or other variables.

\section{Putting it all together} \label{sec:putting_it_all_together}

In the previous section, we characterized marked element AMR as an MDP in which the value of the refinement parameter $\theta$ can be chosen by querying a marking policy $\pi(\theta|o)$ that depends on the current refinement state, distinguished by an observable $o\in\mathcal{O}$.
We then motivated a specific observation space~\cref{eq:hobsspace} intended for both the efficiency problem~\cref{eq:err_thresh} and the accuracy problem~\cref{eq:dof_thresh}.

It remains to formulate a methodology to solve the corresponding optimization problems.
The first step is to define a statistical model for the policy $\pi(\theta|o)$.
Experience has shown that projecting a Gaussian model whose mean $\mu = \mu(o)$ and standard deviation $\sigma = \sigma(o)$ are parameterized by a feed-forward neural network works well on this type of problem \cite{sutton2018reinforcement}.
In other words, the policy $\pi(\theta|o)$ is sampled by projecting normally distributed samples $\tilde{\theta}\sim \tilde{\pi}(\tilde{\theta}|o)$ onto the interval $[0,1]$:
\begin{equation}
	\theta = \max\{0,\min\{1,\tilde{\theta}\}\}
	.
\end{equation}
The family of Gaussian probability density functions is written
\begin{equation}
	\tilde{\pi}(\tilde{\theta}|o)
	=
	\frac{1}{\sigma(o)\sqrt{2\pi}}
	\exp
	\bigg\{
		-\frac{1}{2}\bigg(\frac{\tilde{\theta} - \mu(o)}{\sigma(o)}\bigg)^2
	\bigg\}
	,
\end{equation}
where $(\mu(o),\ln(\sigma(o))) = z_{L}(o)$ and
\begin{equation}
	z_{\ell+1}(o)
	=
	W_L\phi(z_{\ell}(o)) + b_\ell
	,
	\quad
	1\leq \ell \leq L
	,
\end{equation}
starting with $z_1(o) = W_1 o + b_1$.
Here, $W_\ell\in\mathbb{R}^{n_\ell\times n_{\ell-1}}$ is the weight matrix, $b\in\mathbb{R}^{n_\ell}$ is the bias vector in the $\ell$th layer ($n_0 = \mathrm{dim}(\mathcal{O})$, and $n_{L} = 2$), and $\phi\colon\mathbb{R}\to\mathbb{R}$ is a nonlinear activation function applied elementwise to its argument.
The execution of the resulting (AM)$^2$R process is described in the flowchart in~\Cref{fig:flowchart}.

\begin{figure}
\centering
	\includegraphics[width=\textwidth]{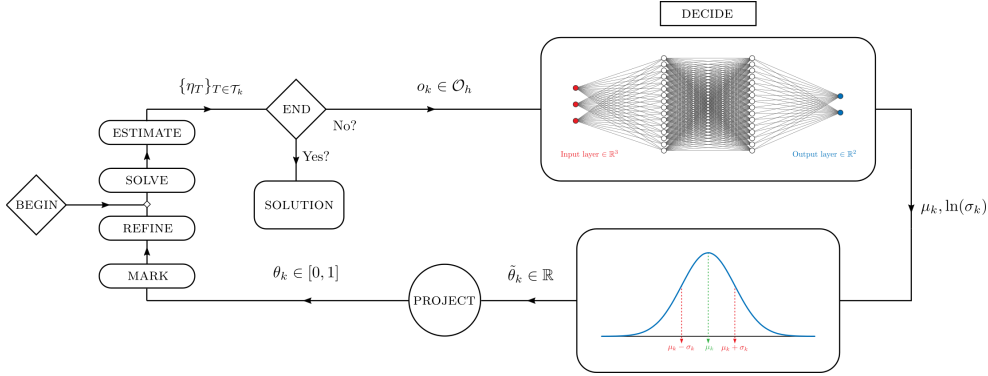}
		\caption{Flowchart describing the \rev{pipeline for training a marking policy. When deployed, the policy always sets~$\tilde\theta_k$ to $\mu_k$, making the trained policy a deterministic function of the observation space.}}
	\label{fig:flowchart}
\end{figure}

With the substitutions above, a trained policy is found by optimizing~\cref{eq:err_thresh} or~\cref{eq:dof_thresh} over all weight matrices $W_\ell$ and bias vectors $b_\ell$.
To solve the corresponding finite-dimensional optimization problem, we employ a particular type of reinforcement learning algorithm called \textit{proximal policy optimization} (PPO)~\cite{schulman2017proximal}.
PPO methods fall under the broader class of \textit{policy gradient methods}.
Supporting technical details on policy gradient methods can be found in~\cite{schulman2015trust,silver2014deterministic,sutton1999policy} and the textbook of Sutton and Barto~\cite{sutton2018reinforcement}.
We also refer the interested reader to our open-source software implementation \cite{Code}.

\rev{Finally, we note that while training a policy involves random draws from the Gaussian density functions, deploying the policy does not.  When deployed, the policy selects $\tilde\theta_k$ to be $\mu_k$, making it a deterministic function of the observation space.  A traditional fixed-parameter marking rule can thus be thought of as a constant-valued policy with the constant chosen heuristically, rather than by training on representative problems.}

\begin{remark}
\label{rem:SoftwareImplementation}
Our software implementation is based on the open-source C++ finite element software library \mfem~\cite{mfem,mfem-web} and the open-source Python-based reinforcement learning toolbox \texttt{RLlib}~\cite{rllib}.
To interface between \mfem~and \texttt{RLlib}, we rely on the \mfem~Python wrapper \texttt{PyMFEM}~\cite{pymfem-web}.
\texttt{PyMFEM} allows us to interact with the rigorously tested $hp$-refinement functionalities of \mfem~and create Python-based versions of \Cref{alg:AM2R_1,alg:AM2R_2}.
\texttt{RLlib} allows us to design and sample from the corresponding refinement policies as well as provide numerous state-of-the-art training algorithms that can be used to solve~\cref{eq:err_thresh,eq:dof_thresh}.
These training routines are made especially efficient through parallelism provided by the open-source workload manager~\texttt{Ray}~\cite{ray}, which \texttt{RLlib} natively employs.
\end{remark}

\begin{remark}
The following configuration settings for \texttt{RLlib} that control the policy training regimen \texttt{ppo.PPOTrainer(...)} are common among our experiments.
We use batch mode \texttt{truncate\_episodes}, sgd minibatch size = 100, rollout fragment length = 50, number of workers = 10, train batch size = 500, $\gamma=1.0$, learning ratio = $10^{-4}$, and seed = 4000.
The neural network that we train (i.e., the ``model'' in \texttt{RLlib} terminology) has two hidden layers, each of size 128, with the Swish activation function~\cite{RZL2017}.
\end{remark}

\section[Extension to hp refinement]{Extension to $hp$-refinement} \label{sec:extension_to_hp-refinement}

Like (AM)$^2$R (\Cref{sub:episodic_environments}), traditional $hp$-AMR obeys a generalization of the SEMR sequence~\cref{SEMR} with an implicit  ``decision'' step \cite{mitchell2014comparison}.
In particular, after marking the set of elements to be refined, the algorithm must \emph{decide} whether to $h$- or $p$-refine each marked element \cite{guo1986hp,demkowicz1989toward,oden1989toward,rachowicz1989toward,babuvska1994p}.
One of the most popular ways to make this decision is to partition the set of marked elements based on a local regularity estimate~\cite{gui1985h-pt1,melenk2001residual,houston2003sobolev,houston2005note} or \emph{a priori} knowledge of the local solution regularity \cite{ainsworth1997aspects}.
Another popular strategy relies on brute force computations on an enriched mesh~\cite{demkowicz2002fully}.
These and other strategies are compared in the review article \cite{mitchell2014comparison}.

In general, the philosophy behind an efficient $hp$-AFEM algorithm is that the solution should be $h$-refined in regions with low regularity and $p$-refined where it has high regularity \cite{babuvska1994p}.
Estimating the local solution regularity often requires multiple local estimators, which we will not describe further for the sake of time and space (see, e.g., \cite{mitchell2014comparison}).
Instead, we devise an $hp$-refinement strategy that requires only one error estimator and encourage follow-on work that considers using, e.g., multiple local estimators to make similar $hp$-refinement decisions.
Rather than aiming to provide a complete view into reinforcement learning for $hp$-refinement, the purpose of our investigation is to demonstrate that sophisticated refinement policies can be learned with our framework.
In particular, we will show that employing a multidimensional action space $\mathcal{A}$ is both feasible and practical.

\subsection[hp action space]{$hp$ action space} \label{sub:action_space}

In this work, we rely on \emph{a priori} knowledge to partition the set of marked elements into disjoint subsets for $h$- and $p$-refinement, respectively.
Our marking rule is inspired by the largely successful ``flagging'' strategy proposed in \cite{ainsworth1997aspects}.
In the ``flagging'' strategy, the user flags specific geometric features in the mesh where they anticipate the regularity to be low, and, in turn, a marked element is $h$-refined if and only if its closure intersects a flagged feature and all other marked elements are $p$-refined.
The comparison in \cite{mitchell2014comparison} demonstrates that flagging can outperform much more complicated strategies in benchmark problems with singular solutions.
However, it is widely acknowledged that flagging has limited utility because it involves direct (sometimes ad hoc) user interaction.

In our generalization, we aim to target elliptic PDEs with singular solutions and, in doing so, assume that the relative size of the local error estimate is correlated to the physical distance from singularities.
Based on this correlation, we can induce $h$-refinement near singularities by marking a subset of elements with the largest local error estimates for $h$-refinement.
We then mark a disjoint subset of elements with the next largest local error estimates for $p$-refinement.

More specifically, let $\mathcal{A} = [0,1] \times [0,1]$.
For $(\theta, \rho) \in \mathcal{A}$, we $h$-refine all elements $T \in \mathcal{T}$ satisfying
\begin{subequations}
\begin{equation}
\label{eq:hp-hMaxRule}
  \theta\cdot\max_{S\in\mathcal{T}}\{\eta_S\} < \eta_T
  ,
\end{equation}
and we $p$-refine all elements $T \in \mathcal{T}$ satisfying
\begin{equation}
\label{eq:hp-pMaxRule}
  \rho \theta \cdot\max_{S\in\mathcal{T}}\{\eta_S\}
  <
  \eta_T
  \leq
  \theta\cdot\max_{S\in\mathcal{T}}\{\eta_S\}
  .
\end{equation}
\end{subequations}
One may readily note \rev{that conditions \cref{eq:hp-hMaxRule} and \cref{eq:hp-pMaxRule} are exclusive, meaning any given element is marked for $h$, $p$, or no refinement, but never for both $h$ and $p$ refinement.}
Further, observe that $\theta = 1$ induces only $p$-refinement and $\rho = 1$ induces only $h$-refinement.
Alternatively, $\theta = 0$ induces uniform $h$-refinement and $\rho = 0$ induces uniform $hp$-refinement (with the split between $h$ and $p$ controlled by $\theta$).
Thus, our marking rule provides a wide (but not exhaustive) range of possible refinement actions.
This flexibility is appealing but leads to a difficult parameter specification problem that has a longstanding precedent for $hp$ marking rules.
For instance, one of the oldest $hp$ strategies in the literature \cite{gui1985h-pt3} also uses a parameter to partition the marked elements into $h$- and $p$-refinement subsets.

\subsection[hp observation space]{$hp$ observation space} \label{sub:observation_space}

Unlike optimal $h$-refinement, optimal $hp$-refinement leads to an exponential convergence rate in error reduction~\cite{babuvska1994p} and typically causes $p$ to vary across elements in the mesh.
As a result, the normalization of ${\eta}_T$ in~\cref{eq:NormalizedErrorEstimates} is not suitable for $hp$-refinement because it depends explicitly on the polynomial order $p$.
Our remedy is to construct an alternative distribution variable based on the local convergence rate, which takes the place of the exponent $p/d$ in~\cref{eq:NormalizedErrorEstimates}.
In particular, we define
\begin{equation}
\label{eq:DefinitionOfZeta}
	\zeta_T = -\frac{\ln(|\mesh_k|^{1/2} \eta_T)}{\ln( \ndofs(\mesh_k) )}
	,
\end{equation}
or, equivalently,
\begin{equation}
\label{eq:ProofStep2}
	\ndofs(\mesh_k)^{-\zeta_T} = |\mesh_k|^{1/2}\, \eta_T
	.
\end{equation}

It is straightforward to show that $\variance_k[\zeta_T] = 0$ if and only if $\variance_k[\eta_T] = 0$.
Therefore, the variance of $\zeta_T$ also provides a way to quantify error equidistribution.
An interesting second property is that the expected value of $\zeta_T$ is related to the global convergence rate, as evidenced by~\cref{prop:ExpectedValueOfZeta}.

\begin{proposition}
\label{prop:ExpectedValueOfZeta}
						If there exist constants $C,\beta>0$ such that
	\begin{equation}
	\label{eq:LowerBoundAssumption}
		\eta_k
		\leq
		C \ndofs(\mesh_k)^{-\beta}
		,
	\end{equation}
	then
	\begin{equation}
		\liminf_{k\to\infty}~\mean_k[\zeta_T] \geq \beta
		.
	\end{equation}
\end{proposition}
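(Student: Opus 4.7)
The plan is to bound $\mean_k[\zeta_T]$ from below by manipulating the inside of the logarithm with Jensen's inequality and then chaining through the RMS identity already used in the proof of \Cref{prop:VarianceOfEta}. By linearity of the empirical mean and the definition of $\zeta_T$,
\begin{equation*}
    \mean_k[\zeta_T]
    = -\frac{1}{\ln(\ndofs(\mesh_k))} \, \mean_k\!\left[\ln\!\bigl(|\mesh_k|^{1/2}\eta_T\bigr)\right],
\end{equation*}
so it suffices to find an upper bound on $\mean_k[\ln(|\mesh_k|^{1/2}\eta_T)]$ that scales like $-\beta \ln(\ndofs(\mesh_k))$ up to lower-order terms.

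The first step is to apply Jensen's inequality to the concave function $\ln$, giving
\begin{equation*}
    \mean_k\!\left[\ln\!\bigl(|\mesh_k|^{1/2}\eta_T\bigr)\right]
    \leq \ln\!\bigl(|\mesh_k|^{1/2}\,\mean_k[\eta_T]\bigr).
\end{equation*}
Next I would use the elementary inequality $\mean_k[\eta_T] \leq \rms_k[\eta_T]$ (which is immediate from~\cref{eq:BoundOnSD} applied to $\eta_T$, or just Cauchy--Schwarz), together with the identity $|\mesh_k|^{1/2} \rms_k[\eta_T] = \eta_k$ used already in the proof of \Cref{prop:VarianceOfEta}. Combining these gives $|\mesh_k|^{1/2}\mean_k[\eta_T] \leq \eta_k$, and hypothesis~\cref{eq:LowerBoundAssumption} then yields
\begin{equation*}
    \mean_k\!\left[\ln\!\bigl(|\mesh_k|^{1/2}\eta_T\bigr)\right]
    \leq \ln(\eta_k) \leq \ln(C) - \beta \ln(\ndofs(\mesh_k)).
\end{equation*}

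Dividing by $-\ln(\ndofs(\mesh_k))$, which is eventually positive since $\ndofs(\mesh_k)\geq 2$ once any refinement has occurred, delivers
\begin{equation*}
    \mean_k[\zeta_T] \;\geq\; \beta \;-\; \frac{\ln(C)}{\ln(\ndofs(\mesh_k))}.
\end{equation*}
The conclusion follows upon observing that $\ndofs(\mesh_k) \to \infty$ as $k\to\infty$ (which holds because every iteration of \Cref{alg:BasicAMR} refines at least one element, strictly increasing the dof count), so the second term vanishes in the limit and $\liminf_{k\to\infty} \mean_k[\zeta_T]\geq \beta$. I expect the two tiny points of care will be (i) verifying that $\ndofs(\mesh_k)$ genuinely diverges under the implicit assumption that we are running an AMR process where at least one element is refined at every step, so that the denominator $\ln(\ndofs(\mesh_k))$ is positive and tending to infinity, and (ii) confirming that $\eta_T > 0$ for all elements so that the logarithms are well defined; both are mild and can be handled with a sentence.
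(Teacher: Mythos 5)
Your proof is correct and follows essentially the same route as the paper's: both hinge on Jensen's inequality for the concave logarithm, the identity $|\mesh_k|^{1/2}\rms_k[\eta_T]=\eta_k$, the hypothesis~\cref{eq:LowerBoundAssumption}, and then division by $-\ln(\ndofs(\mesh_k))$ followed by passage to the limit as $\ndofs(\mesh_k)\to\infty$. The only cosmetic difference is that the paper applies Jensen once to the squared quantities $\ndofs(\mesh_k)^{-2\zeta_T}=|\mesh_k|\,\eta_T^2$ (obtaining $\ln\eta_k^2\geq -2\mean_k[\zeta_T]\ln(\ndofs(\mesh_k))$ directly), whereas you apply it on the linear scale and then bridge to the RMS identity via $\mean_k[\eta_T]\leq\rms_k[\eta_T]$; these two steps compose to exactly the paper's single step.
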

\begin{proof}
	As with~\cref{eq:Normalization1}, it is straightforward to show that
	\begin{equation}
		|\mesh_k| \mean_k[\eta_T^2]
		=
		\eta_k^2
		.
	\end{equation}
	Therefore, by~\cref{eq:ProofStep2}, we have that $\mean_k[\ndofs(\mesh_k)^{-2\zeta_T}] = \eta_k^2$.
	We now use this identity and Jensen's inequality to derive an upper bound on $\ln \eta_k^2$:
	\begin{align*}
						\ln \eta_k^2
						&=
		\ln \mean_k[\ndofs(\mesh_k)^{-2\zeta_T}]
						\geq
		\mean_k[\ln (\ndofs(\mesh_k)^{-2\zeta_T}) ]
		=
		-2\mean_k[\zeta_T]\ln (\ndofs(\mesh_k))
		.
	\end{align*}
		Thus, by~\cref{eq:LowerBoundAssumption}, we have
	\begin{equation}
	\label{eq:ProofStep3}
		-2\mean_k[\zeta_T]\ln (\ndofs(\mesh_k))
		\leq
		\ln \eta_k^2
		\leq
		2\ln C - 2\beta \ln (\ndofs(\mesh_k))
		.
	\end{equation}
	The proof is completed by dividing the left and right sides of~\cref{eq:ProofStep3} by $-2\ln (\ndofs(\mesh_k))$ and considering the limit as $\ndofs(\mesh_k) \to \infty$.~
\end{proof}

\Cref{fig:error-dist-zeta} depicts the distribution of $\zeta_T$ for the same model problems considered in~\Cref{fig:error-dist-href}.
In the top row, we see that variance of $\zeta_T$ remains bounded for both the smooth and singular solutions undergoing uniform $h$-refinement.
Moreover, both distributions appear to be converging logarithmically to a fixed distribution.
In the bottom row, we see that adaptive $h$-refinement decreases the variance for both types of solutions.
In our experiments, we also tried observing different combinations of statistics of $\zeta_T$ and settled on the following formulation due to its simplicity: $\mathcal{O}_{hp} = [0,1] \times [0,\infty) \times [0,\infty)$, where each $o_k \in \mathcal{O}$ is defined by
\begin{equation}
	o_k
	=
	\left(~b_k,~\mean_k[\zeta_T],~\sd_k[\zeta_T]~\right)
	.
	\label{eq:hpobs}
\end{equation}

\begin{figure}
\centering
\begin{tabular}{lcc}
	{\rotatebox[origin=l]{90}{~~~Uniform $h$-ref}}
	& \includegraphics[width=.32\linewidth]{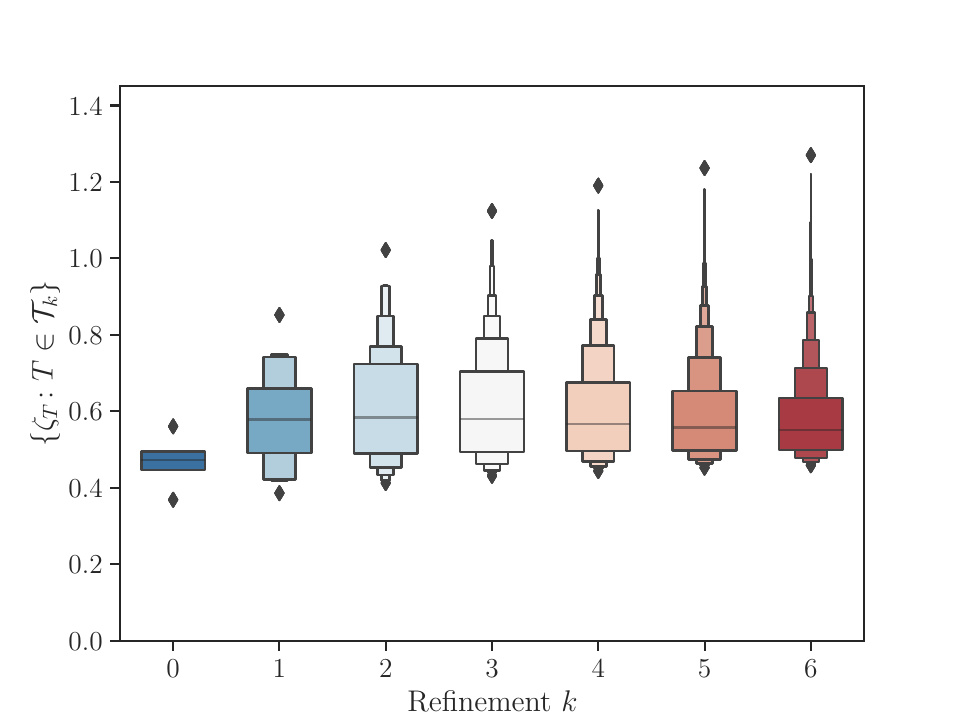}
	& \includegraphics[width=.32\linewidth]{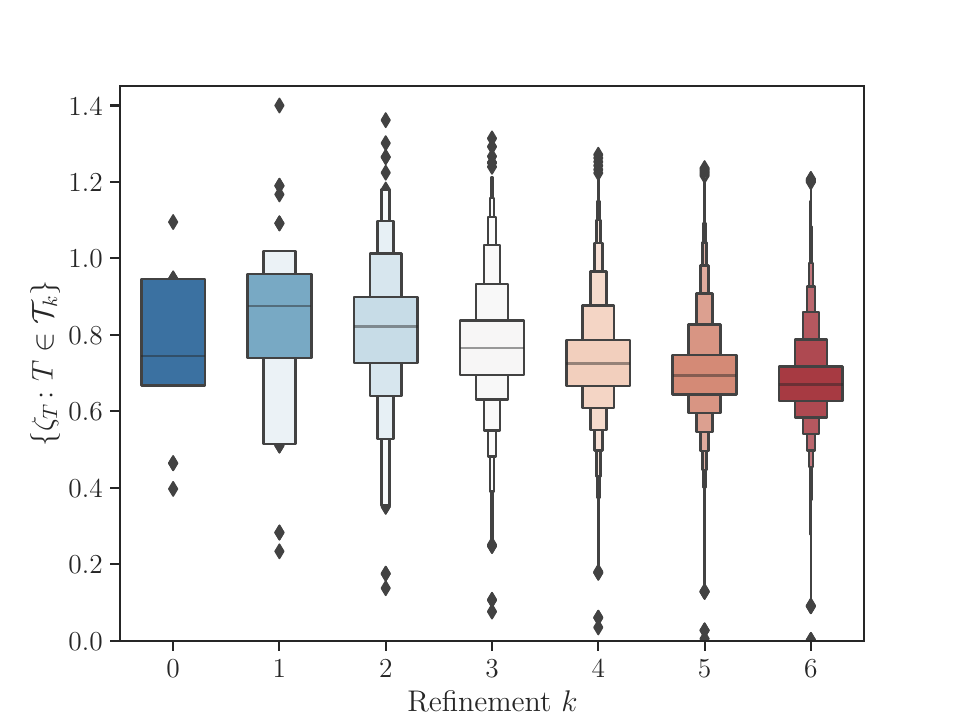} \\
{\rotatebox[origin=l]{90}{~~~Adaptive $h$-ref}}
	& \includegraphics[width=.32\linewidth]{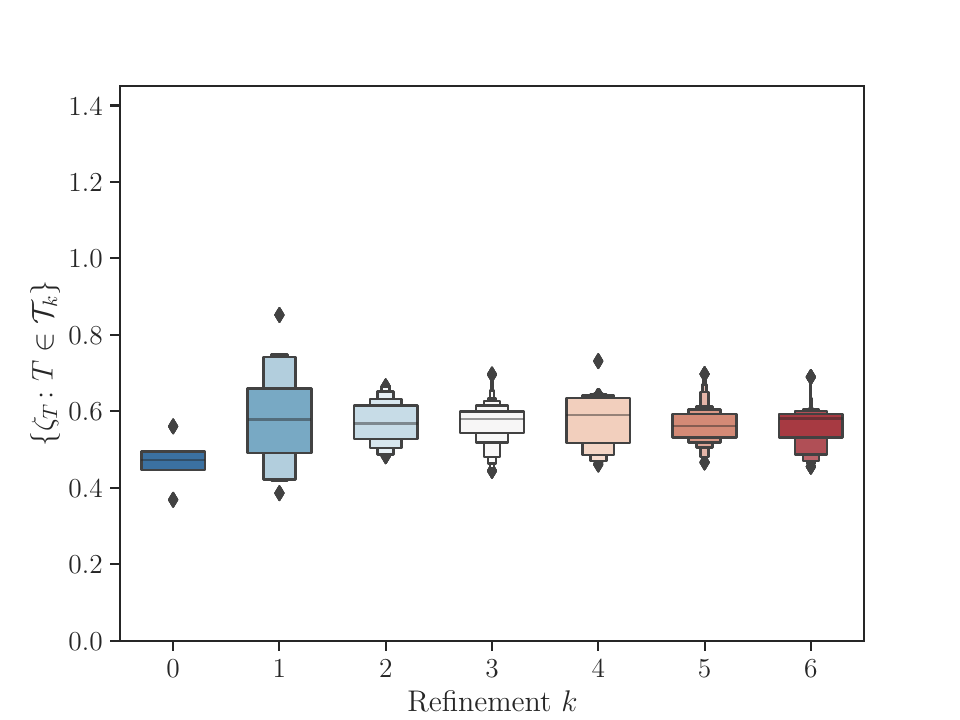} 
	& \includegraphics[width=.32\linewidth]{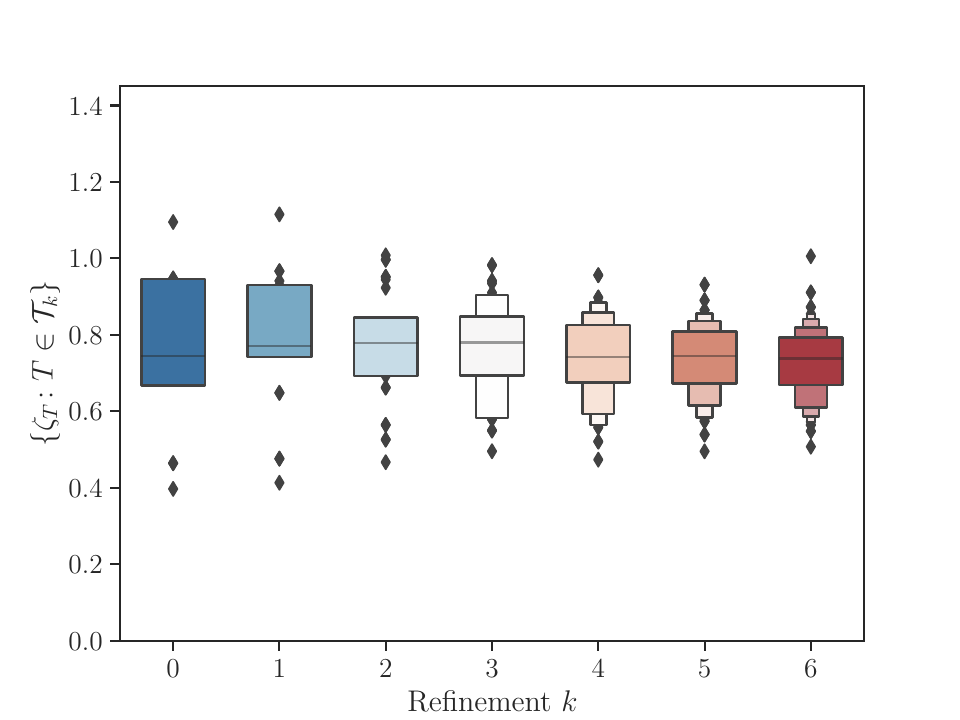} \\[1mm]
	& $u=\sin(\pi x)\sin(\pi y)$ & $u=r^{2/3}\sin(2\theta/3)$
	\end{tabular}
	\caption{
	Empirical distributions of $\zeta_T$ \cref{eq:DefinitionOfZeta}, the normalized local error estimate we will use for $hp$-refinement.
	The experimental setup is identical to that of~\Cref{fig:error-dist-href}, replacing $\widetilde{\eta}_T$ with $\zeta_T$.
	Unlike the prior case, the mean and variance of $\zeta_T$ do not diverge under uniform refinement for the singular solution case (top right).  
	We observe that the variance of $\zeta_T$ is bounded more tightly by AMR than by uniform refinement by comparing the scale of the axes in the right column.
}
\label{fig:error-dist-zeta}
\end{figure}

\subsection[hp SEMDR algorithm]{$hp$ SEDMR algorithm} \label{sub:algorithm}

In~\Cref{alg:hpAM2R_2} we state the $hp$-(AM)$^2$R algorithm for the accuracy problem.
The algorithm for the efficiency problem is similar; cf.~\Cref{alg:AM2R_1}. 
\begin{algorithm2e}
\DontPrintSemicolon
	\caption{\label{alg:hpAM2R_2} $hp$-(AM)$^2$R with a computational budget constraint.}
	\SetKwInOut{Input}{input}
	\SetKwInOut{Output}{output}
	\SetKw{Break}{break}
	\Input{Initial mesh $\mesh_0$, marking policy $\pi$, computational budget $J_\infty > 0$.}
	\Output{Discrete solution $u_k$, error estimate $\eta_k$.}
	$k \leftarrow 0$.\;
	\While{$J_k < J_\infty$}
	{
		Solve~\cref{eq:DiscreteCanoncialPDE} with $\mesh = \mesh_k$. \tcp*[l]{\textsc{solve}~~~~~~}
		Compute error estimates $\{\eta_T\}_{T\in\mesh_0}$. \tcp*[l]{\textsc{estimate}}
		Sample $(\theta_k,\rho_k) \sim \pi(\cdot|o_k)$. \tcp*[l]{\textsc{decide}}
		Mark all $T\in\mesh_k$ satisfying~\cref{eq:hp-hMaxRule} for $h$-refinement. \tcp*[l]{\textsc{mark}}
		Mark all $T\in\mesh_k$ satisfying~\cref{eq:hp-pMaxRule} for $p$-refinement. \tcp*[l]{\textsc{mark}}
		Form $\mesh_{k+1}$ by refining all marked elements in $\mesh_k$. \tcp*[l]{\textsc{refine}}
		$k \leftarrow k+1$.\;
	}
\end{algorithm2e}

\section{Numerical results} \label{sec:applications}

We present a collection of numerical experiments to demonstrate the feasibility and potential benefits of employing a trained (AM)$^2$R policy.
The following examples begin with simple $h$-refinement validation cases, followed by extensions to more general $hp$-refinement on 3D meshes.
In all experiments, we used the Zienkiewicz--Zhu error estimator \cite{zienkiewicz1992superconvergent1,zienkiewicz1992superconvergent2} to compute the local error estimates $\eta_T$; cf.~\Cref{ex:ZZ}.
Moreover, all experiments relied on the greedy marking rule~\cref{eq:GreedyMeshRefinement}.
Similar results can be obtained with the D\"orfler rule~\cref{eq:DorflerMeshRefinement}.

\subsection[Validation example, h-refinement]{Validation example, $h$-refinement} \label{ssub:ex1a}
We begin with the well-known L-shaped domain problem from \Cref{fig:motivation} and allow only $h$-refinement.
Here, we seek approximations of solutions to Laplace's equation:
\begin{equation}
\label{eq:LaplaceEqn}
	\Delta u = 0
	\quad \text{in } \Omega,
	\qquad
	u = g \quad \text{on } \partial\Omega,
\end{equation}
where the exact solution is known, allowing the specification of the appropriate Dirichlet boundary condition $g$.
Placing a re-entrant corner at the origin of an infinite L-shaped domain and assigning zero Dirichlet boundary conditions on the incident edges provides the following nontrivial exact solution to Poisson's equation in polar coordinates: $r^\alpha\sin(\alpha\theta)$, where $\alpha=2/3$.
The boundary condition $g$ for the (bounded) L-shaped domain in \cref{eq:LaplaceEqn} is the trace of this function and, therefore, $u = r^\alpha\sin(\alpha\theta)$ in $\Omega$.
Optimal $h$-refinement converge rates for this problem are attained by grading refinement depth to be larger for elements closer to the singularity at the re-entrant corner; this strategy is observed in both refinement patterns shown in \Cref{fig:motivation}.

\begin{figure}\centering
        \includegraphics[width=.35\linewidth]{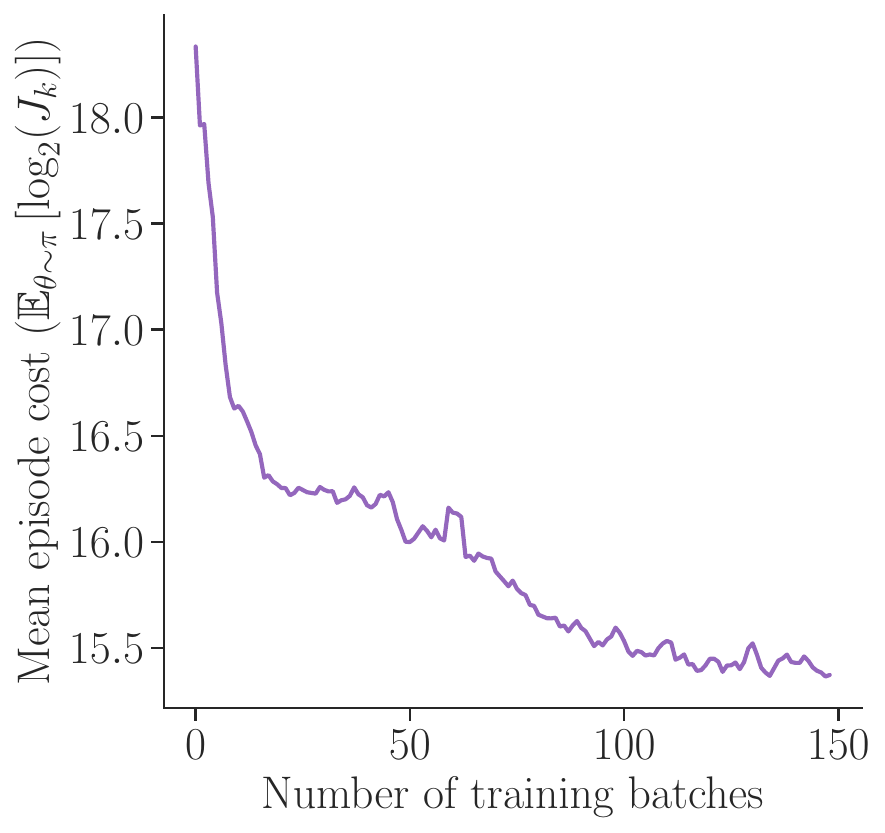}
		\qquad
        \includegraphics[width=.35\linewidth]{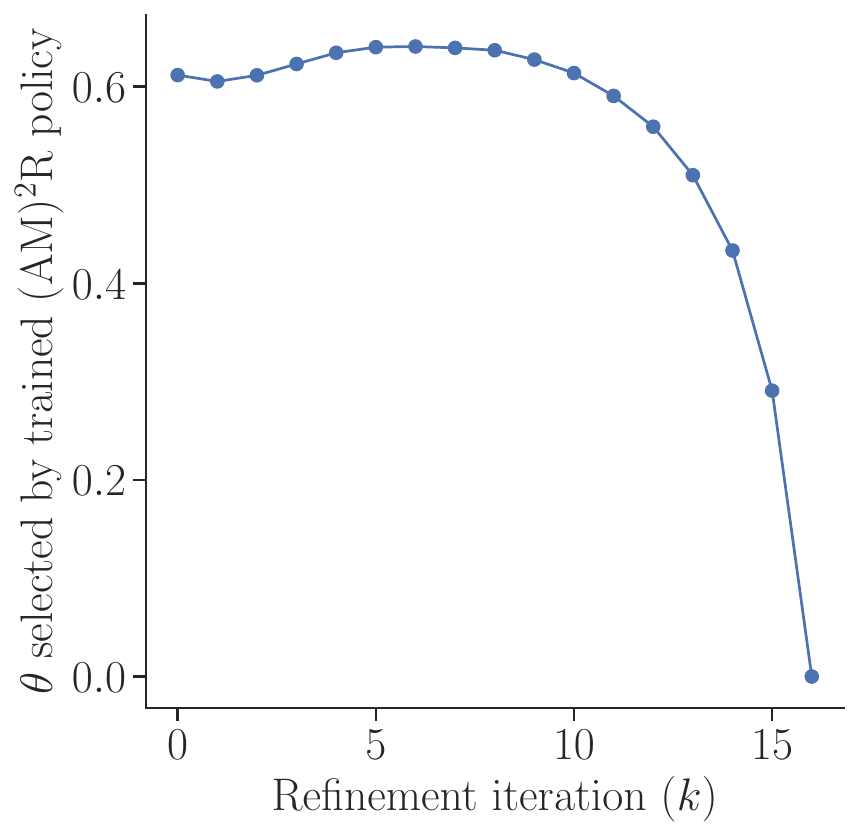}
    \caption{\protect \textit{Left:} Result of training an $h$-refinement policy for the efficiency problem \cref{eq:err_thresh} with uniform polynomial order $p=2$.  The mean episode cost decreases sharply and levels out, as desired. \textit{Right:} When the trained policy is deployed, it dynamically changes the selected $\theta$ value throughout the AFEM workflow, ending with uniform refinement $(\theta=0)$ in the final iteration of \Cref{alg:AM2R_1}.  This sequence of $\theta$ choices is consistent with an intuitively optimal procedure for the classical L-shaped domain problem, as described in the text.}
\label{fig:ex1a-ab}
\end{figure}

To arrive at an $h$-refinement marking policy for \cref{eq:LaplaceEqn}, we solve the efficiency problem \cref{eq:err_thresh} with uniform polynomial order $p=2$, error threshold $\eta_\infty=10^{-4}$, and observation space \cref{eq:hobsspace}.
Once trained, we deploy the policy as described in \Cref{alg:AM2R_1} (see also \Cref{fig:flowchart}) using the same threshold from training.
The results shown in \Crefrange{fig:ex1a-ab}{fig:ex1a-ef} verify that the training and deployment pipelines function as expected in this simple setting and validate its development for more complicated problems.
\Cref{fig:ex1a-ab} (left) shows how the mean episode cost decreases and levels out over a series of 150 training batches, thus indicating that the training has converged to a locally optimal policy.
When we deploy the trained policy, \Cref{fig:ex1a-ab} (right) shows how the action---i.e., the selected $\theta$ value---changes dynamically during the refinement sequence.

We can provide an intuitive interpretation for the sequence of $\theta$ values based on the problem formulation.
The objective of minimizing the cumulative dof count, while still delivering a fixed target error, is best served by initially refining only a small fraction of elements with the very highest error; the value $\theta\approx 0.6$ is determined from training to be a balanced choice in this regard.
Eventually, the error becomes more equidistributed across elements, and the objective is better served by refining many elements at once, resulting in the sharp decrease in $\theta$ values in the latter steps and culminating in uniform refinement at the final step ($\theta = 0$).
This dynamic behavior can be explained by the fact that the cost of refining an individual element grows with the number of remaining refinement steps.
This, in turn, results in a preference to withhold refinements for as long as possible.
A learned policy of transitioning from fairly limited refinements to more uniform refinements will also be observed in the $hp$ experiments described later.

\begin{figure}\centering
        \includegraphics[width=.38\linewidth]{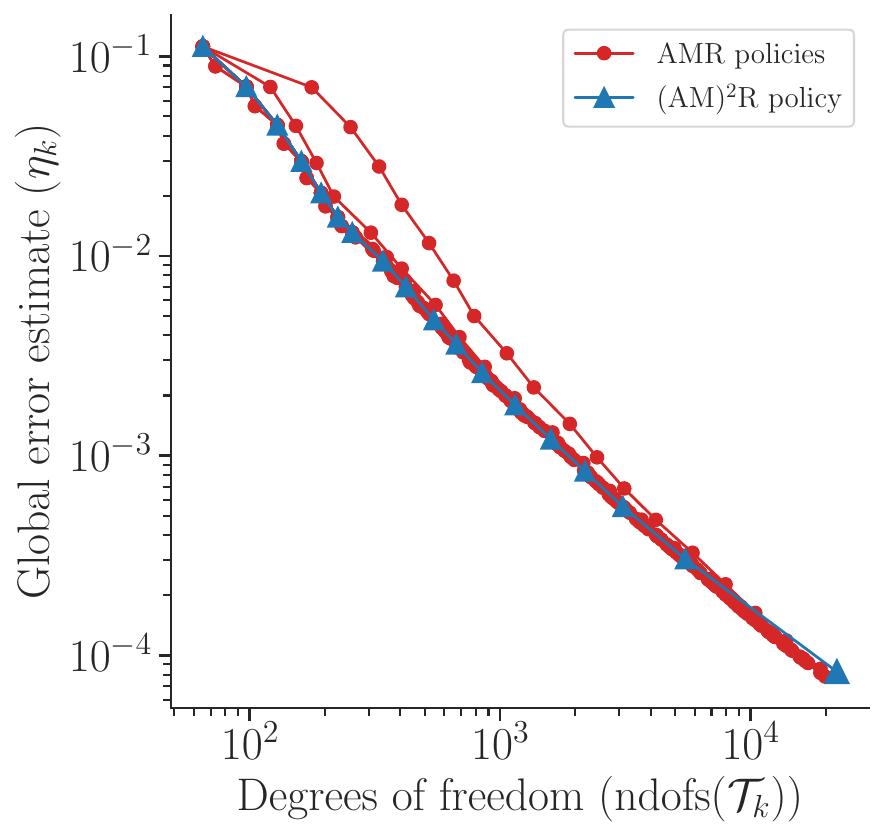}
		\qquad
        \includegraphics[width=.38\linewidth]{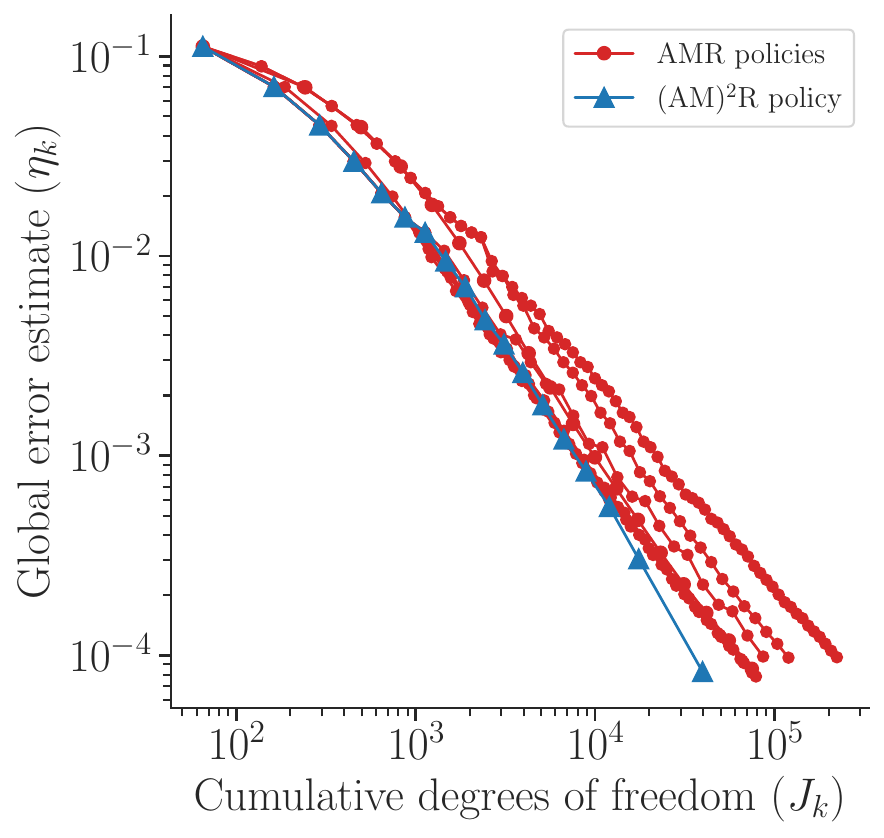}
    \caption{Comparison of the trained (AM$)^2$R policy (blue triangle series) to fixed theta policies (red circle series) for $h$-refinement. \textit{Left:} The (AM$)^2$R policy reduces error as a function of dofs at the same rate as the AMR policies, but the dynamic change in $\theta$ allows it to take larger steps near the end of the AFEM process, thus improving the overall efficiency.  \textit{Right:} The (AM$)^2$R policy achieves the same order of error with significantly fewer \textit{cumulative} dofs than any of the AMR policies, which is the desired goal of the efficiency problem \cref{eq:err_thresh}.  }
\label{fig:ex1a-cd}
\end{figure}

In \Cref{fig:ex1a-cd} (left), we plot the global error estimate $\eta_k$ as a function of dofs for 9 distinct AMR policies with $\theta\in\{0.1,\ldots,0.9\}$ fixed (red dot series) and compare to the RL-trained policy (blue triangle series).
Each point in a series indicates a refinement step ($k$) in an AFEM MDP with the associated policy.
Observe that the (AM$)^2$R policy has a path through these axes that is similar to those of many of the fixed $\theta$ policies, reflecting the fact that it is driving down error at the same rate.
However, in the final steps of the (AM$)^2$R policy, notice that many more dofs are added per step, in accordance with the decrease in $\theta$ value seen in \Cref{fig:ex1a-ab}.
In particular, \rev{the final step of the trained policy is visibly much longer than the typical step sizes of the AMR policies.  By looking at the raw data, we observe that the final step} goes from $5497$ to $22,177$ dofs while driving error down by a factor of $\approx 3.7$ (from $3.0\times 10^{-4}$ to $8.3\times 10^{-5}$), which is a substantially larger step than any of the fixed $\theta$ policies.

A related story is shown in the right plot of \Cref{fig:ex1a-cd}.
Here, the global error estimate of each policy is plotted as a function of \textit{cumulative} dofs at each step, i.e.,~$J_k$.
The (AM$)^2$R policy was trained to minimize $J_k$ and indeed it attains the $10^{-4}$ error threshold with $18\%-61\%$ as many cumulative dofs as any of the traditional (fixed $\theta$) marking policies.

\begin{figure}\centering
        \includegraphics[width=.35\linewidth]{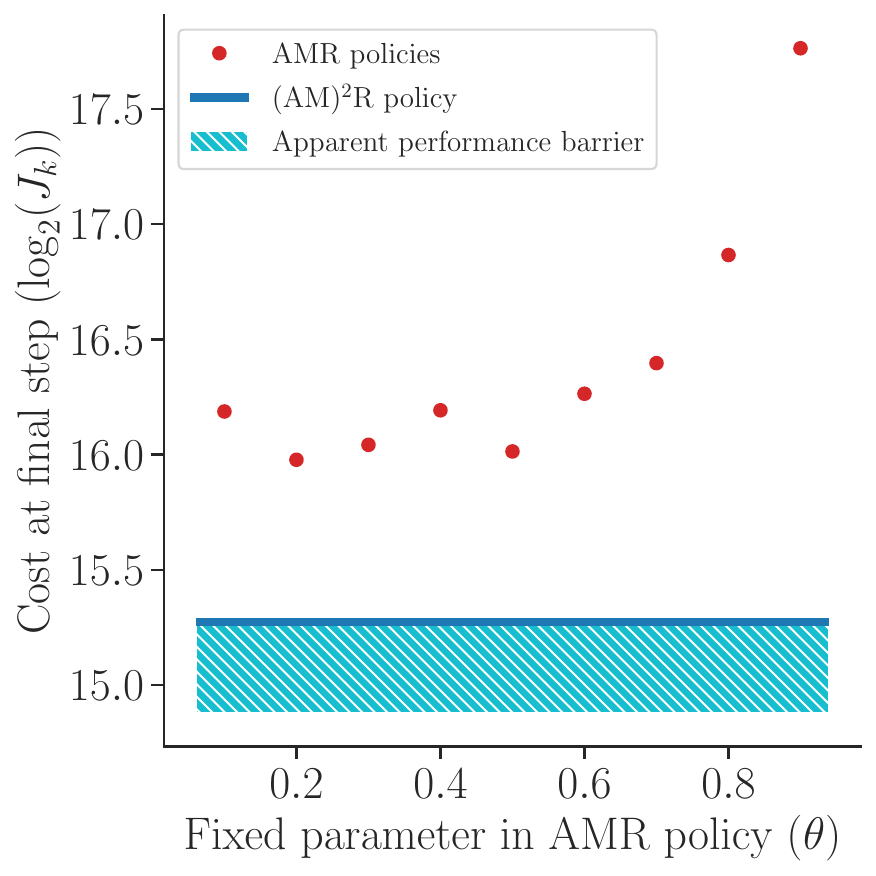}
        \qquad
        \includegraphics[width=.35\linewidth]{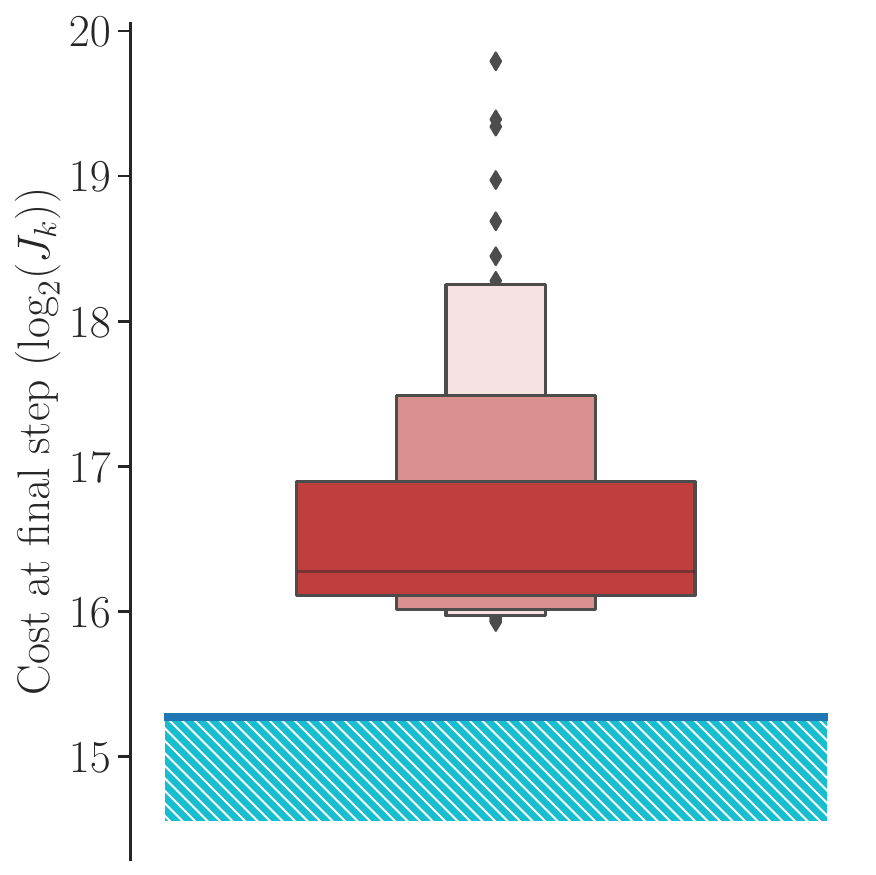}
    \caption{Cost at final step---in this case $\log_2$ of the dof count for the final mesh in which the desired error threshold is achieved---for both the (AM$)^2$R and AMR policies. \textit{Left:} The (AM$)^2$R policy has a noticeably lower cost at final step than a coarse sweep of AMR policies; an apparent barrier to further improvement in this setting is indicated. \textit{Right:} A finer sweep of AMR policies, shown as the red letterbox plot, \rev{reveals that the cost of the \textit{median} fixed parameter policy is roughly one unit higher than the cost of (AM$)^2$R policy. Hence, the (AM$)^2$R policy employs half the cumulative degrees of freedom as the median-performing fixed-parameter policy, meaning it has roughly twice the efficiency by the cumulative degree of freedom metric.}}
\label{fig:ex1a-ef}
\end{figure}

In \Cref{fig:ex1a-ef}, we present two additional views of the cumulative dof count data.
In the left plot, the red dots show the cost at the final step (i.e.,~$\log_2$ of the final cumulative dof count) for each of the traditional AMR policies.
If $\theta$ is fixed throughout an AFEM process---as is the case in every production-level AFEM code that we are aware of---these data suggest that $\theta\approx 0.2$ or $\theta\approx 0.5$ is an optimal choice for minimizing cumulative dof count in this particular setting.
The dark blue line indicates the final cost of the RL policy.  
\rev{Notably, the blue line is well below the cost of any of the traditional AMR policies.  In particular, the median cost of the fixed parameter policies ($\approx$ 16.3, as seen from the letterbox plot on the right) is a full unit above the cost of the RL policy.
Due to the $\log_2$ scaling of the vertical axis, we conclude that the (AM$)^2$R policy trained here has roughly twice the efficiency as the median fixed parameter policy, which could be taken as a proxy for a median performant single parameter choice in a traditional setting.}
We shade the region below the RL policy line to indicate that additional RL training is unlikely to discover a lower final cost, based on our numerical experiments.

Finally, in the right plot of \Cref{fig:ex1a-ef}, we show a ``letter-value plot'' of a larger set of AMR policies, for which we tried every $\theta\in\{0.1,\ldots,0.99\}$.
No improvement is found over the coarser sampling of $\theta$ values shown in the left plot, and, moreover, some choices of $\theta$ (particularly those very close to 1.0) are observed to perform much worse.
With this experiment, we have \rev{demonstrated that dynamically selecting marking parameters \textit{can} improve the efficiency of an AFEM pipeline over a fixed parameter selection; we now move on to experiments that generalize beyond training and deploying on the same mesh.}

\subsection[Robustness to domain geometry, hp-refinement]{Robustness to domain geometry, $hp$-refinement} \label{ssub:ex2c}

We now move on to $hp$-AFEM and more general domain geometries.  
As described in~\Cref{alg:hpAM2R_2}, the action space for the $hp$ \textsc{decide} step is a tuple $(\theta,\rho)\in[0,1]^2$.
Recall that if $\eta_{\max}$ denotes the maximum error estimate for an element at any $k$th step of the MDP, elements $T\in\mesh_k$ with error estimates $\eta_T \in (\rho\theta \eta_{\max},~\theta \eta_{\max}]$ will be marked for $p$-refinement while elements with error estimates $\eta_T \in (\theta \eta_{\max}, \eta_{\max}]$ will be marked for $h$-refinement.
We consider a setting where the optimization goal is best served not only by per-step changes to $\theta$ and $\rho$ but also by allowing the \textit{pace} of such changes to respond to the computed global error distributions.

We approximate solutions to Laplace's equation over a family of domain geometries consisting of the unit disk with a radial section of angle $\omega\in(0, 2\pi)$ removed.
Example domains are shown in~\Cref{fig:hp-train}.
As in the L-shaped domain case, on the straight edges we assign zero Dirichlet boundary conditions and have an exact solution to \cref{eq:LaplaceEqn} in polar coordinates given by $u=r^\alpha\sin(\alpha\theta)$, where $\alpha = \pi/(2\pi -\omega)$~(see, e.g.,~\cite{mitchell2013collection}).
Boundary conditions for the curved portion of the domain are determined from this solution.
Note that the gradient of the solution is singular at the origin if and only if $\omega<\pi$, i.e., if and only if the domain is nonconvex; the singularity is stronger the closer $\omega$ is to 0.

\begin{figure}
\centering
\setlength\tabcolsep{0mm}
\begin{tabular}{ccccc}
{\rotatebox[origin=l]{90}{Initial mesh}}
	\includegraphics[width=0.19\textwidth]{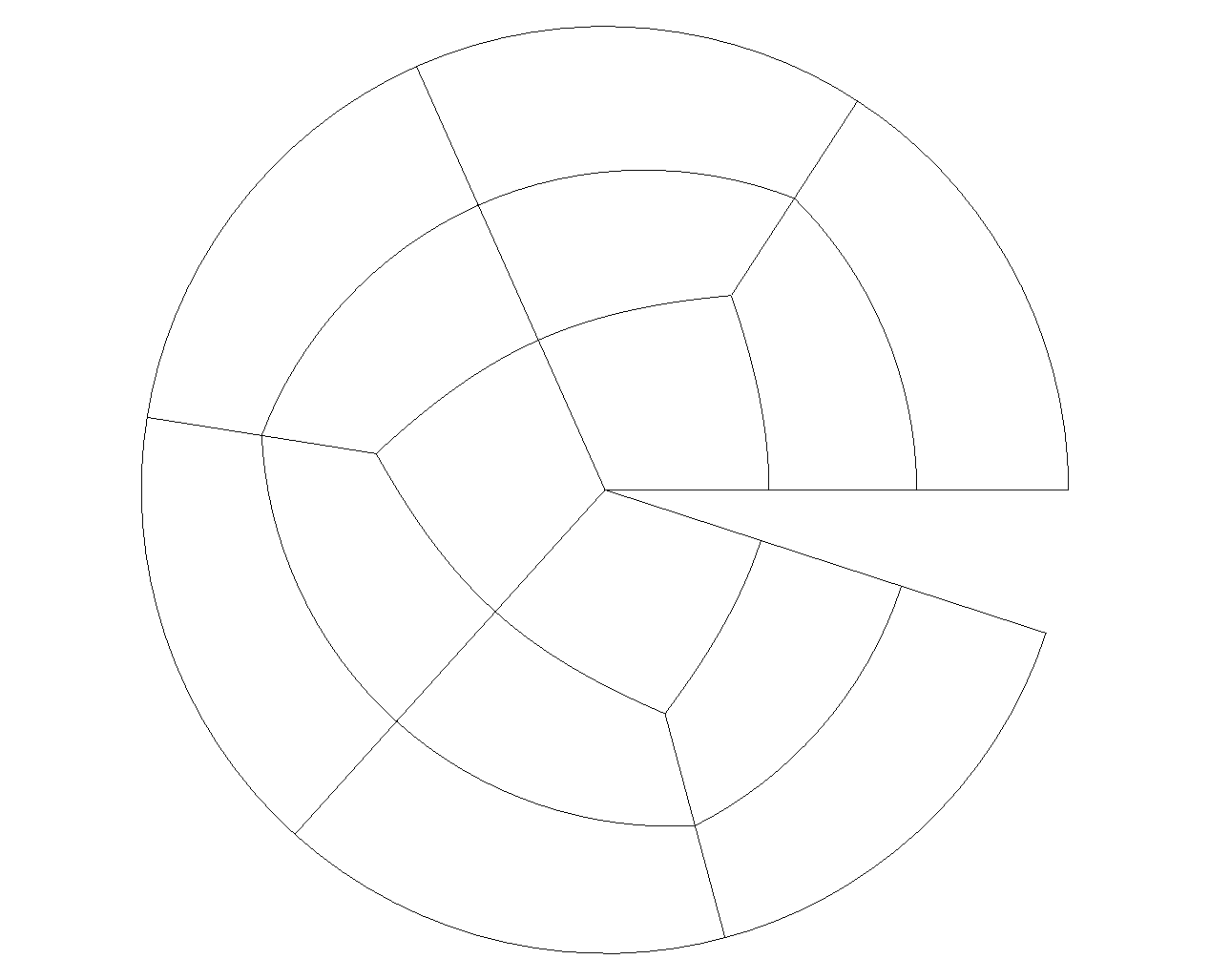} &
	\includegraphics[width=0.19\textwidth]{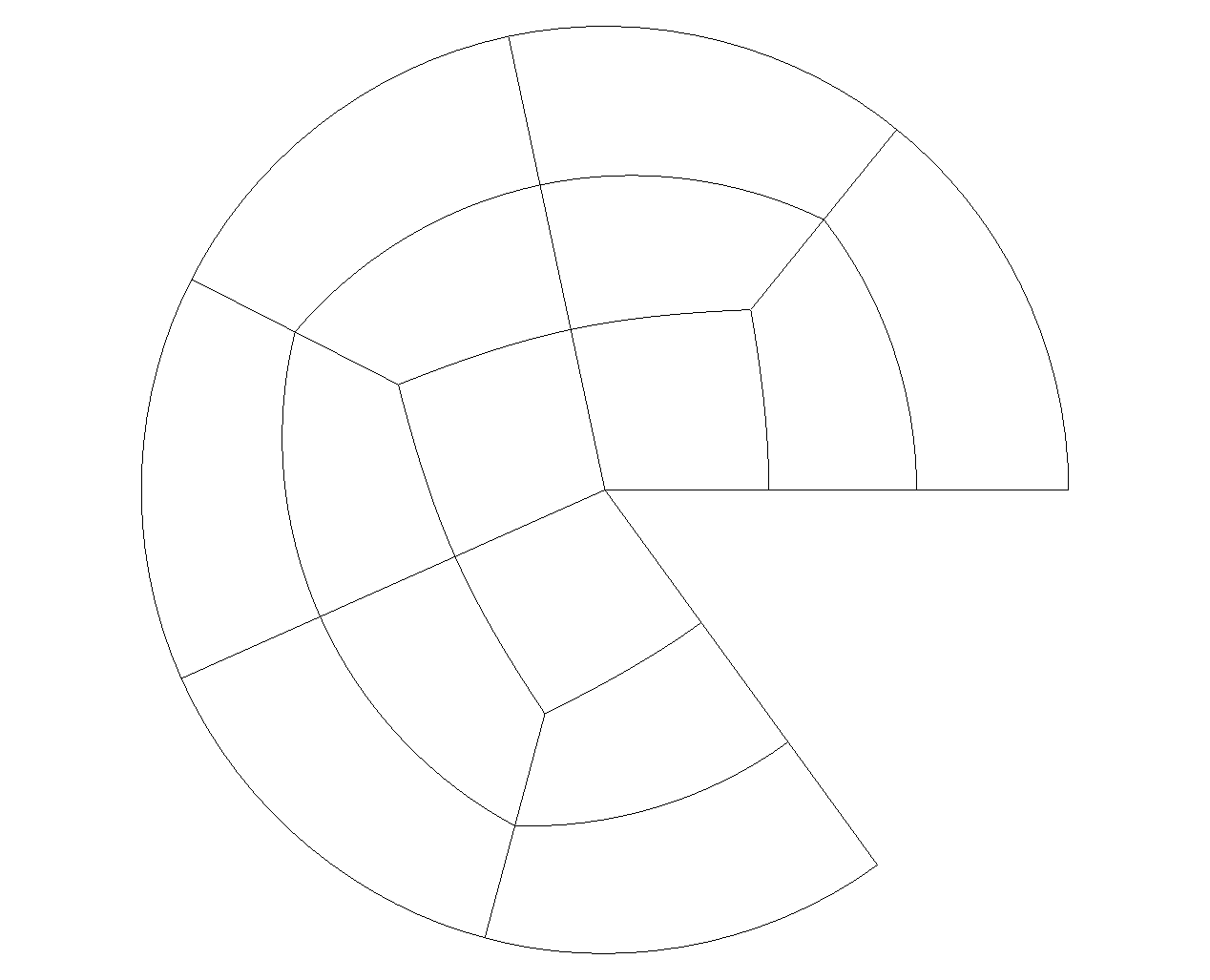} &
	\includegraphics[width=0.19\textwidth]{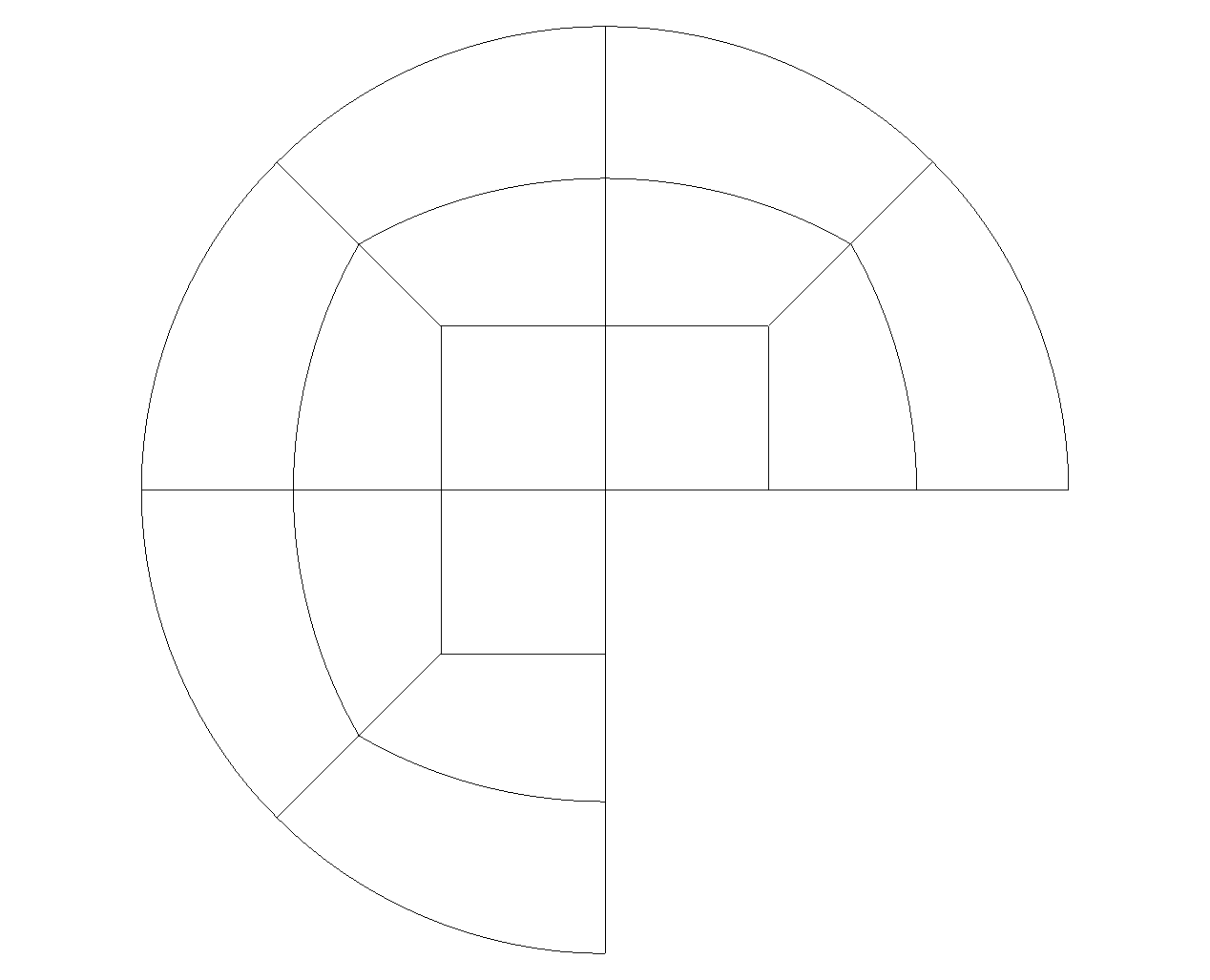} &
	\includegraphics[width=0.19\textwidth]{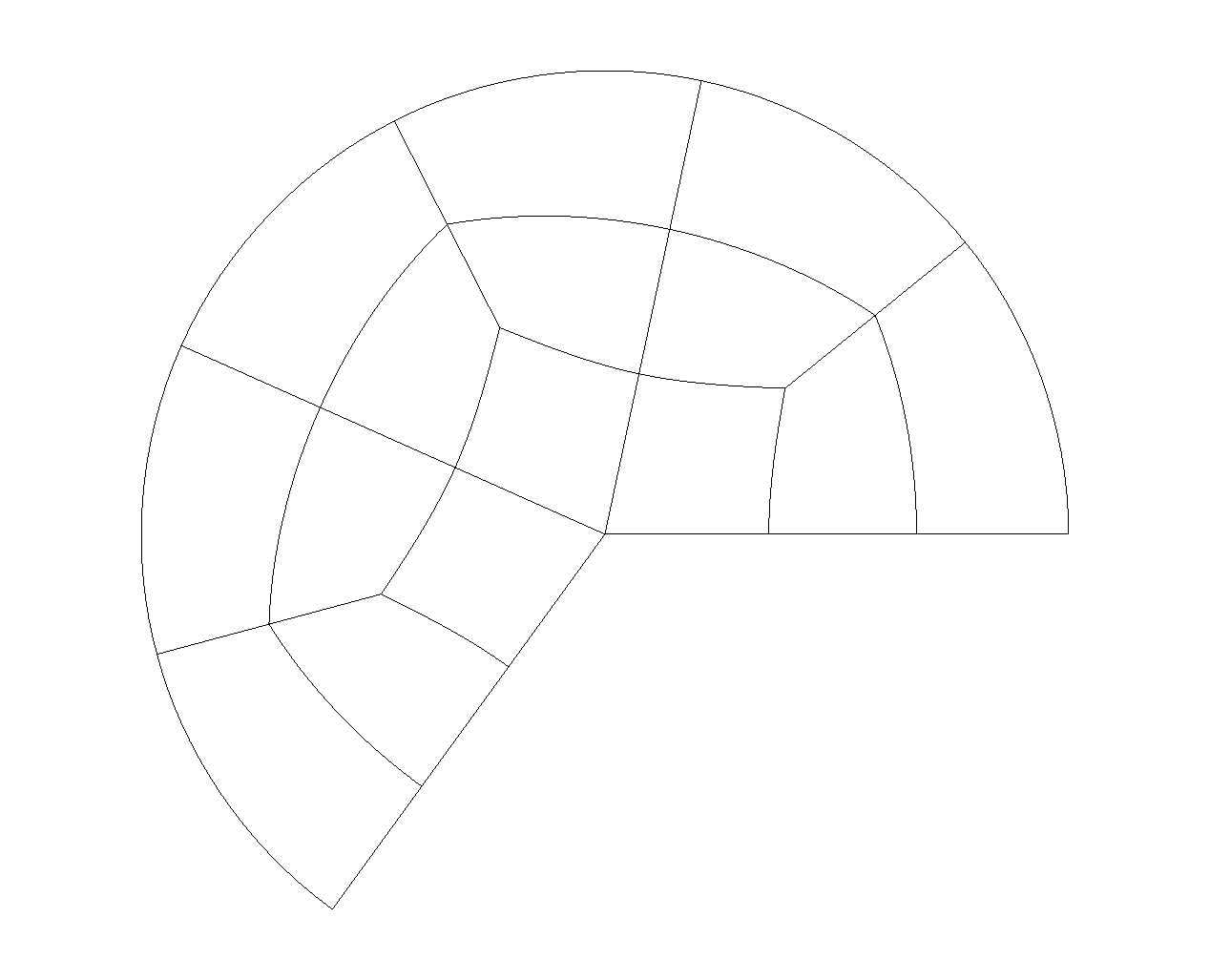} &
	\includegraphics[width=0.19\textwidth]{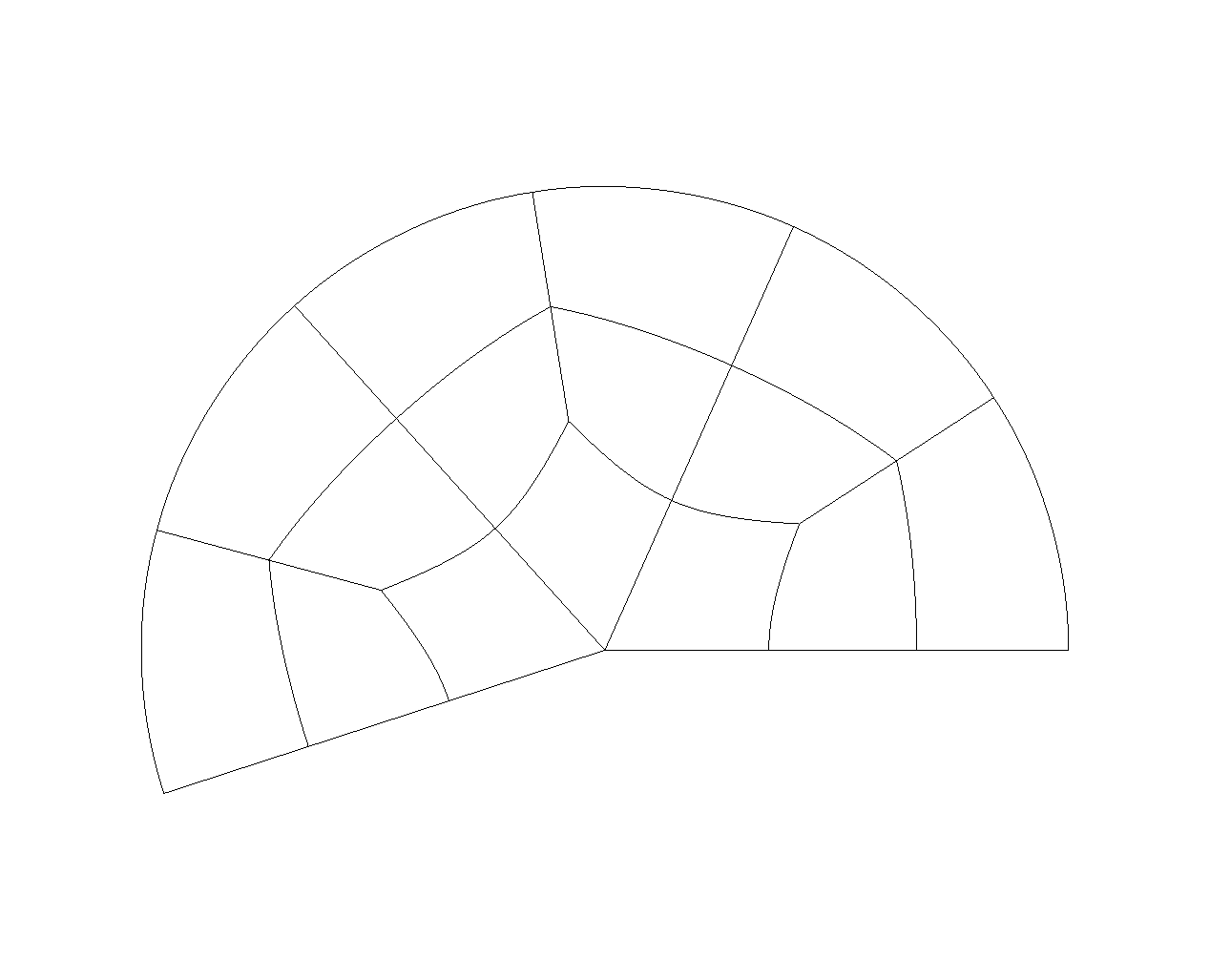} 
	\\
{\rotatebox[origin=l]{90}{~~Final mesh}}
	\includegraphics[width=0.19\textwidth]{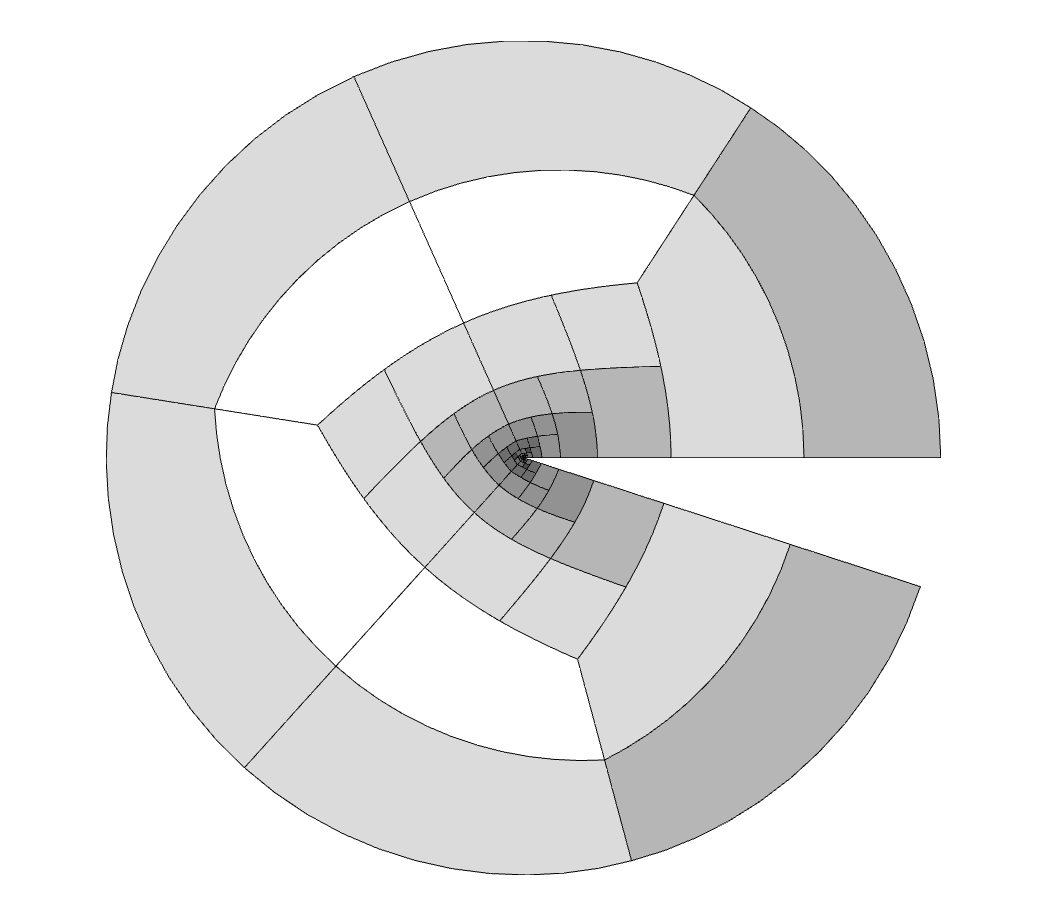} &
	\includegraphics[width=0.19\textwidth]{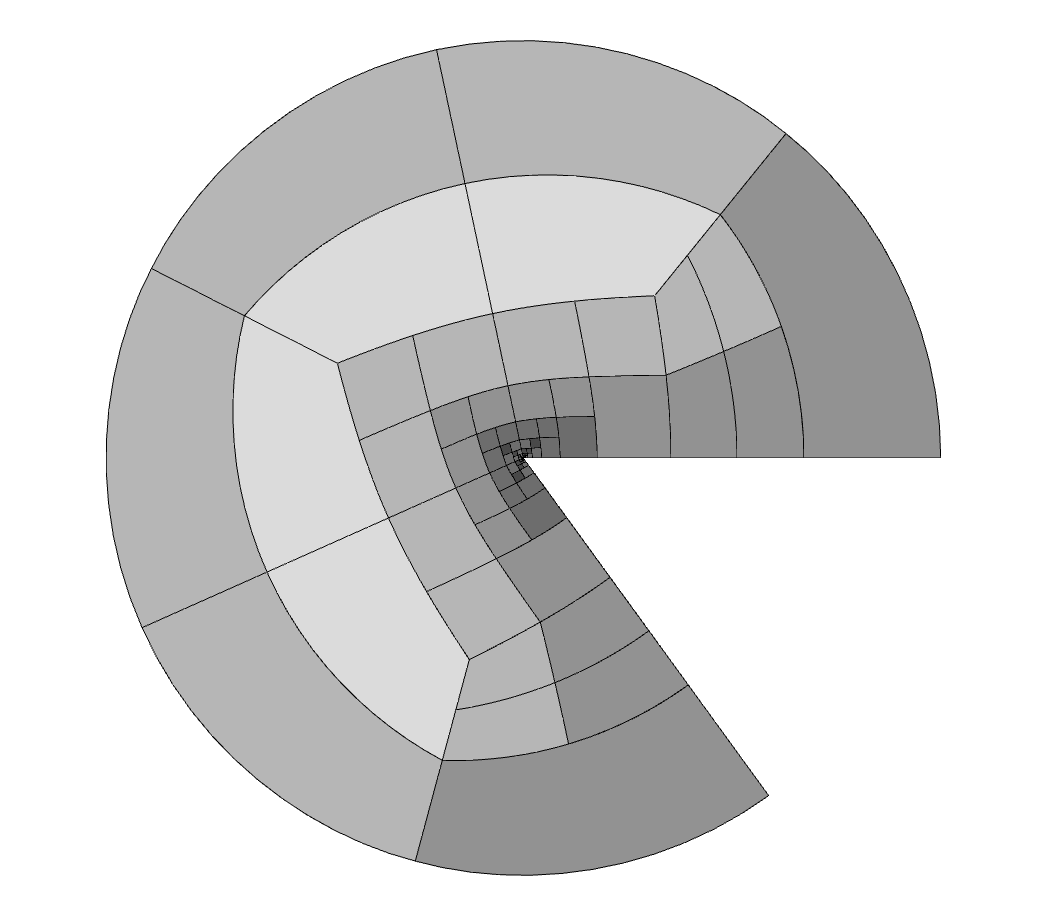} &
	\includegraphics[width=0.19\textwidth]{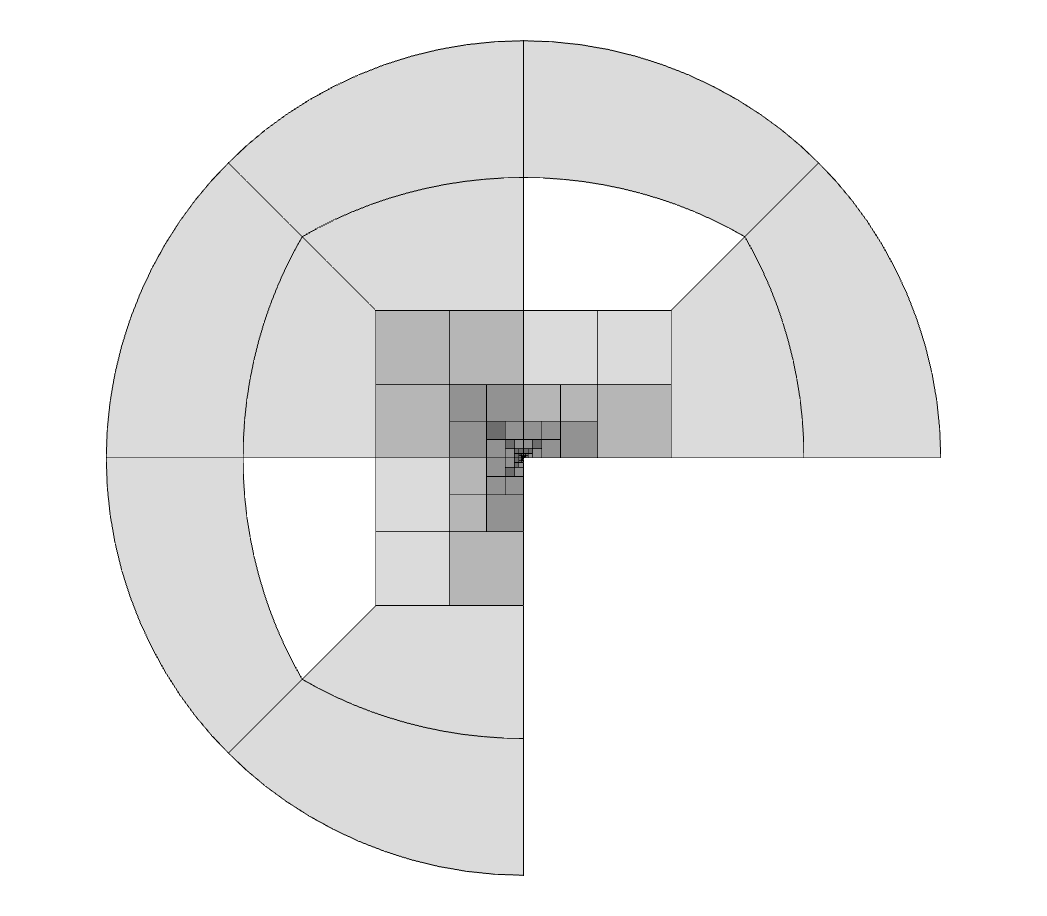} &
	\includegraphics[width=0.19\textwidth]{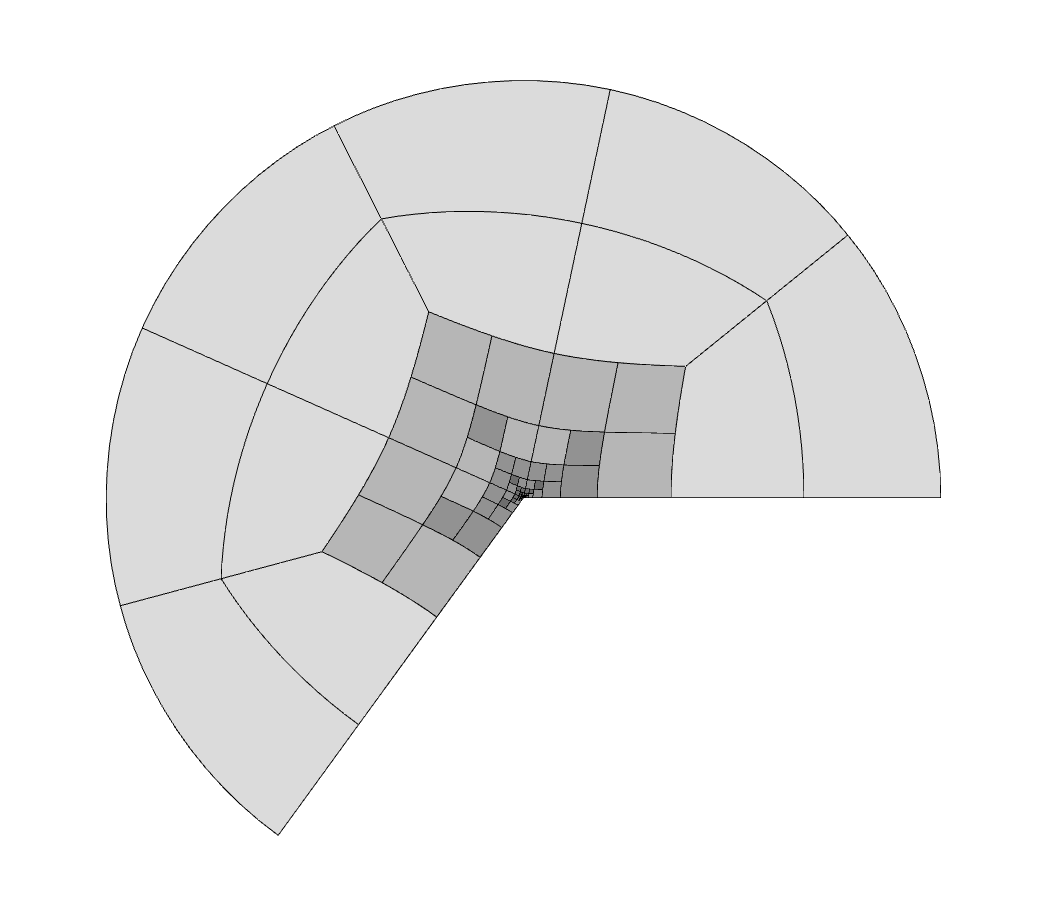} &
	\includegraphics[width=0.19\textwidth]{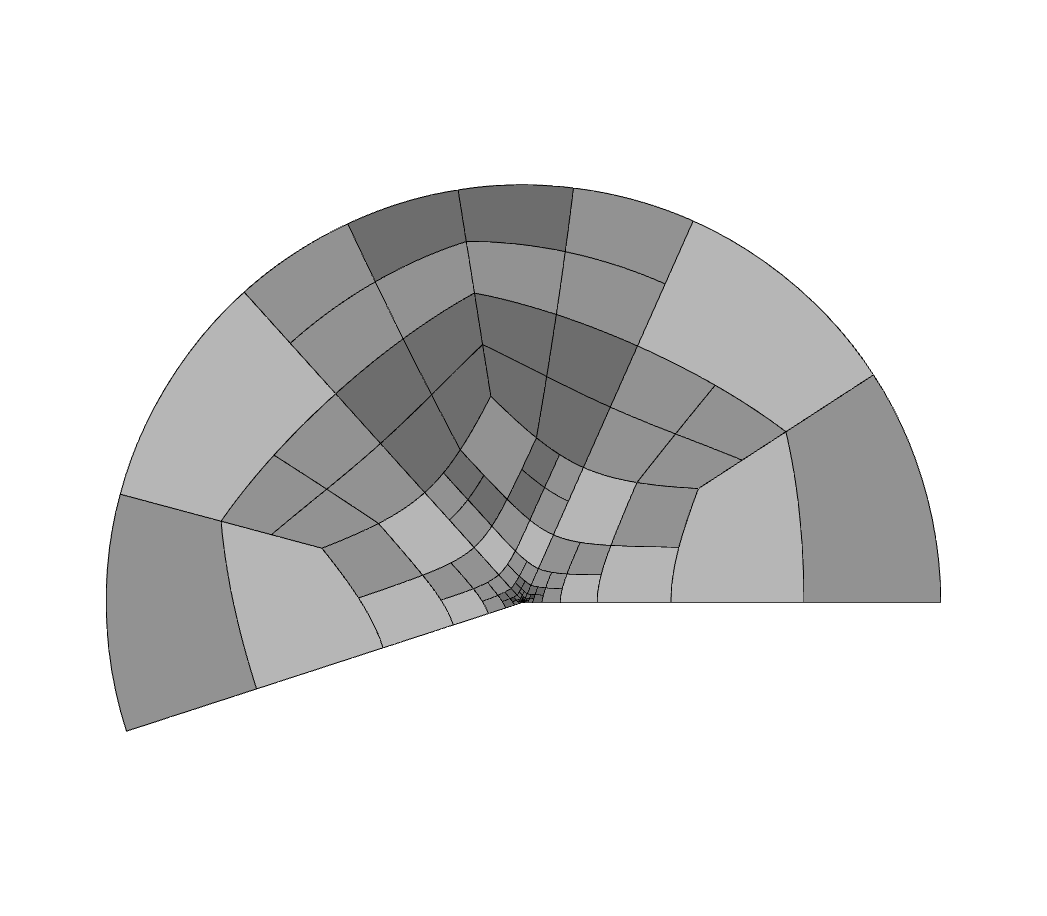} 
	\\
	{\rotatebox[origin=l]{90}{~~Observables}}
	\includegraphics[width=0.19\textwidth]{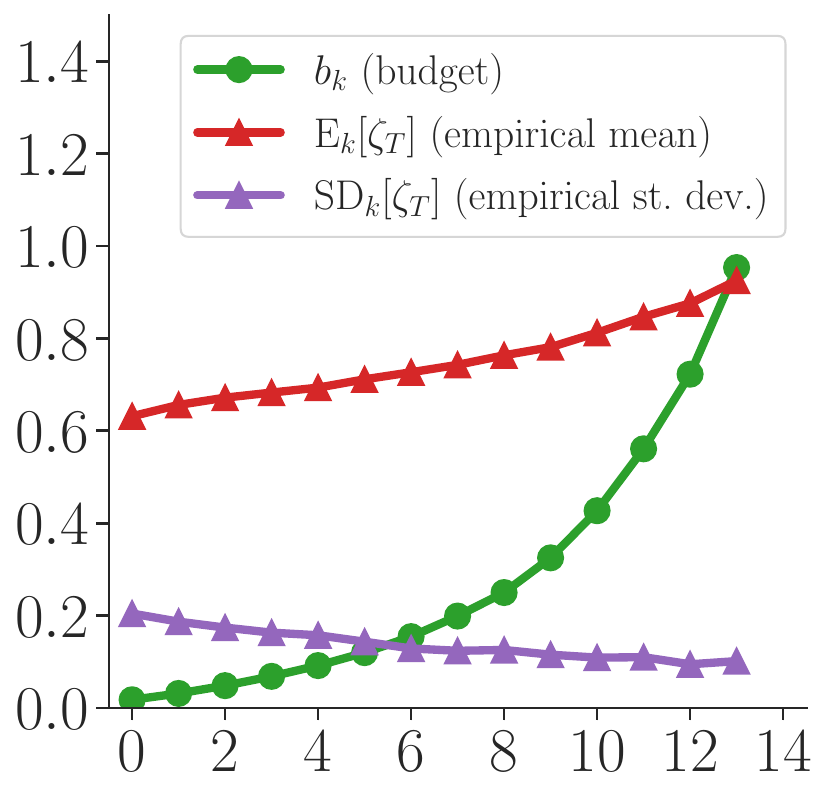} &
	\includegraphics[width=0.19\textwidth]{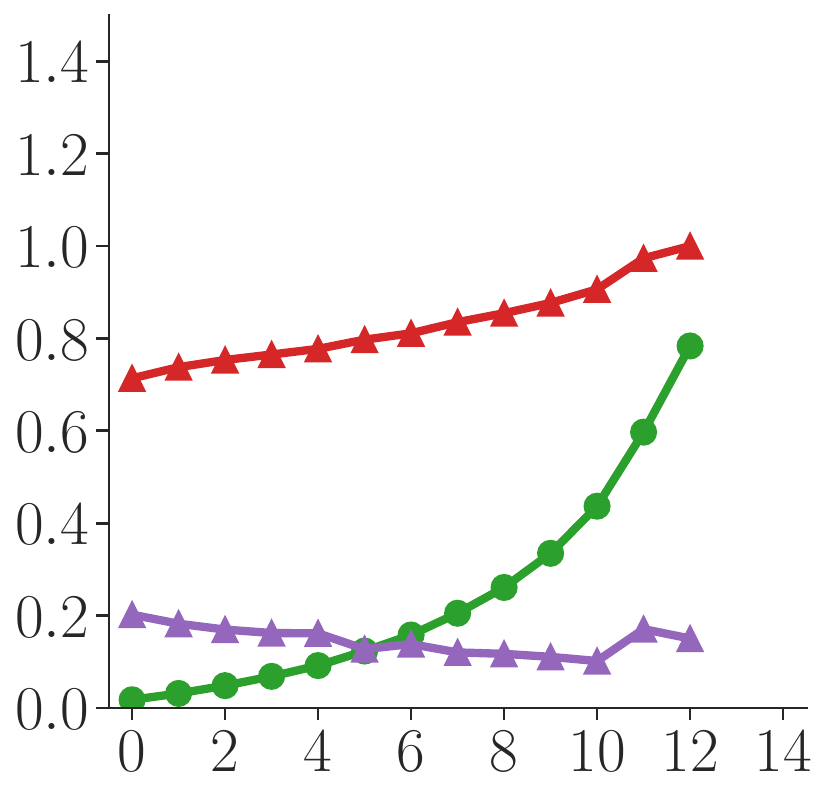} &
	\includegraphics[width=0.19\textwidth]{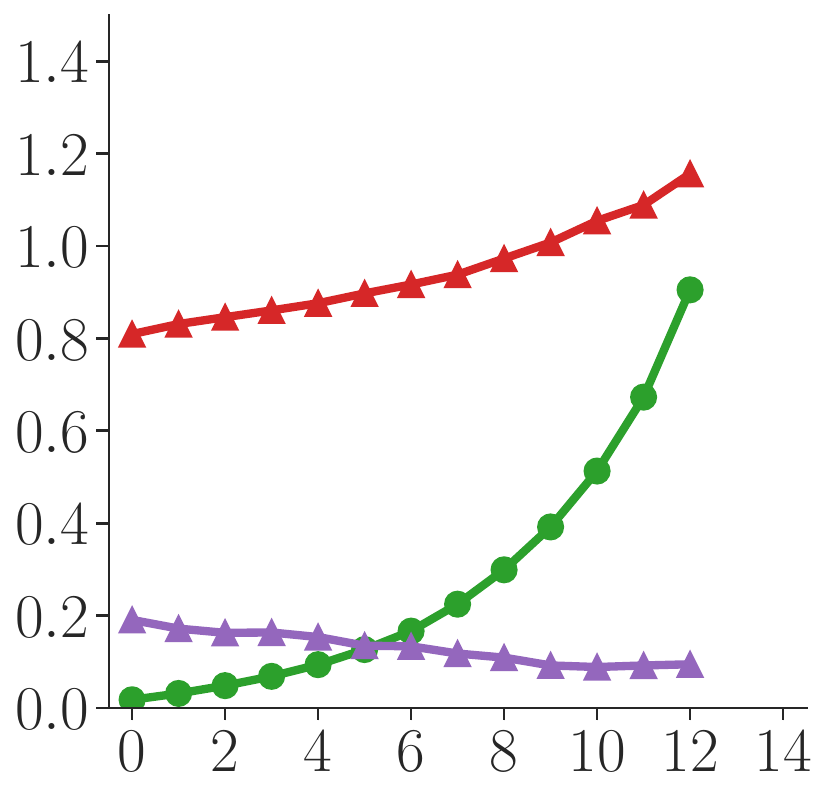} &
	\includegraphics[width=0.19\textwidth]{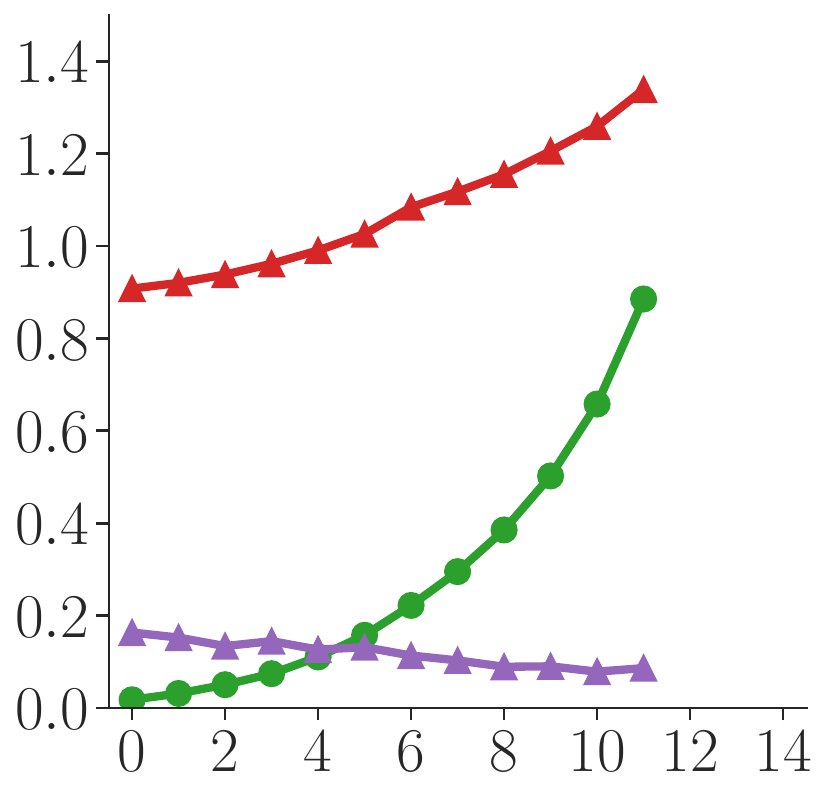} &
	\includegraphics[width=0.19\textwidth]{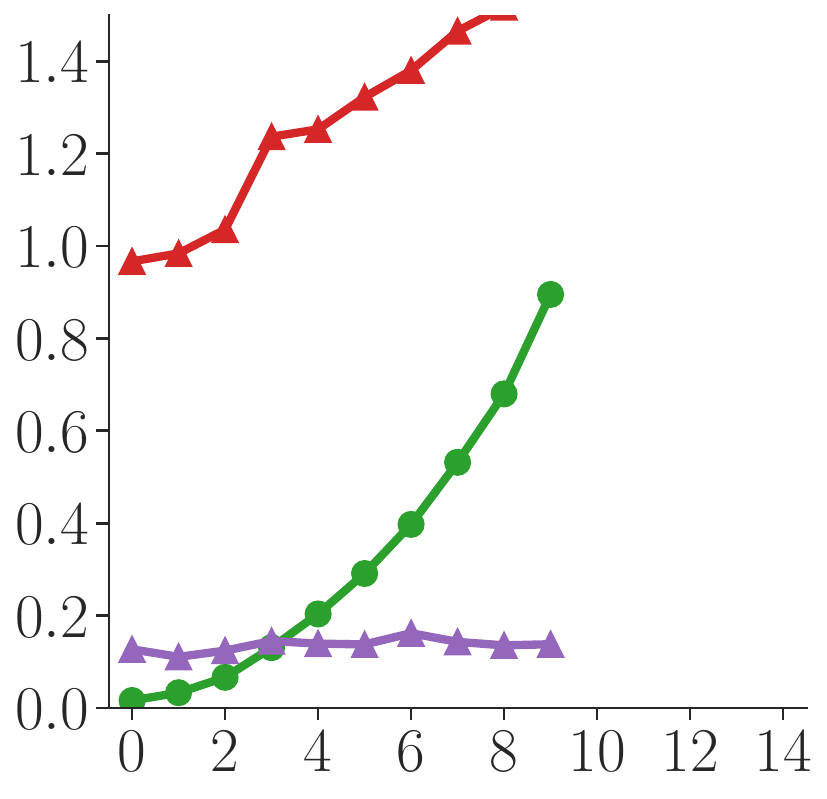} 
	\\
{\rotatebox[origin=l]{90}{~~Policy actions}}
	\includegraphics[width=0.19\textwidth]{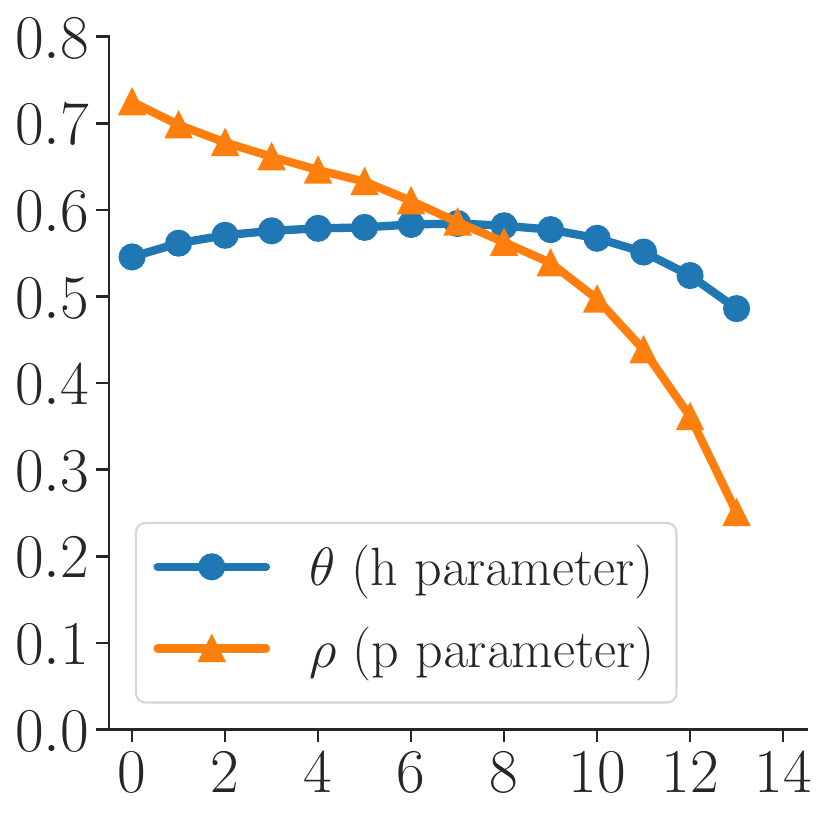} &
	\includegraphics[width=0.19\textwidth]{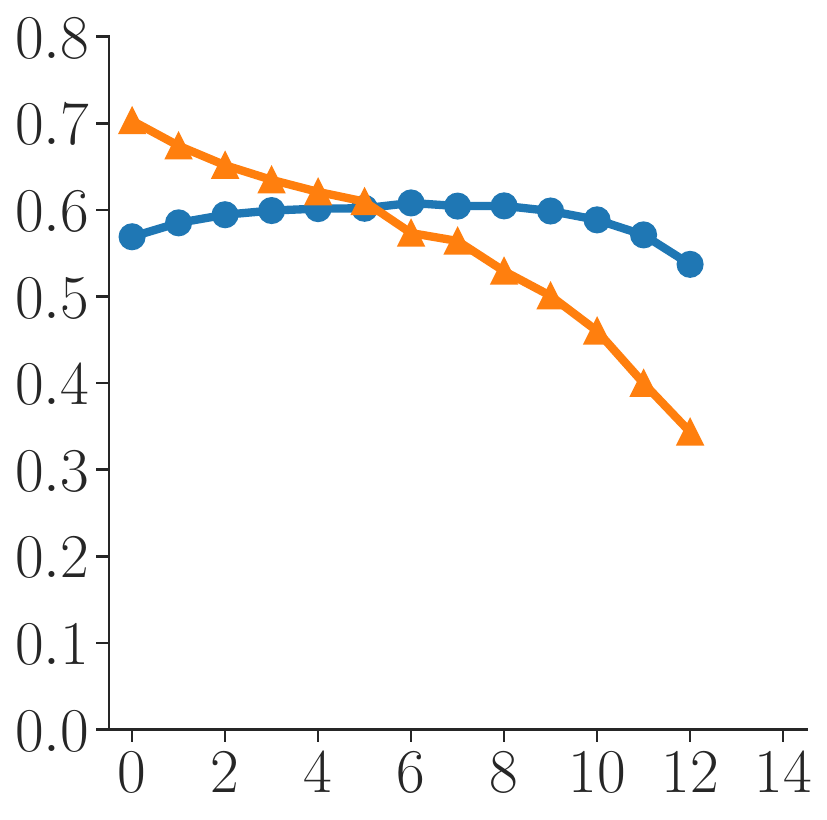} &
	\includegraphics[width=0.19\textwidth]{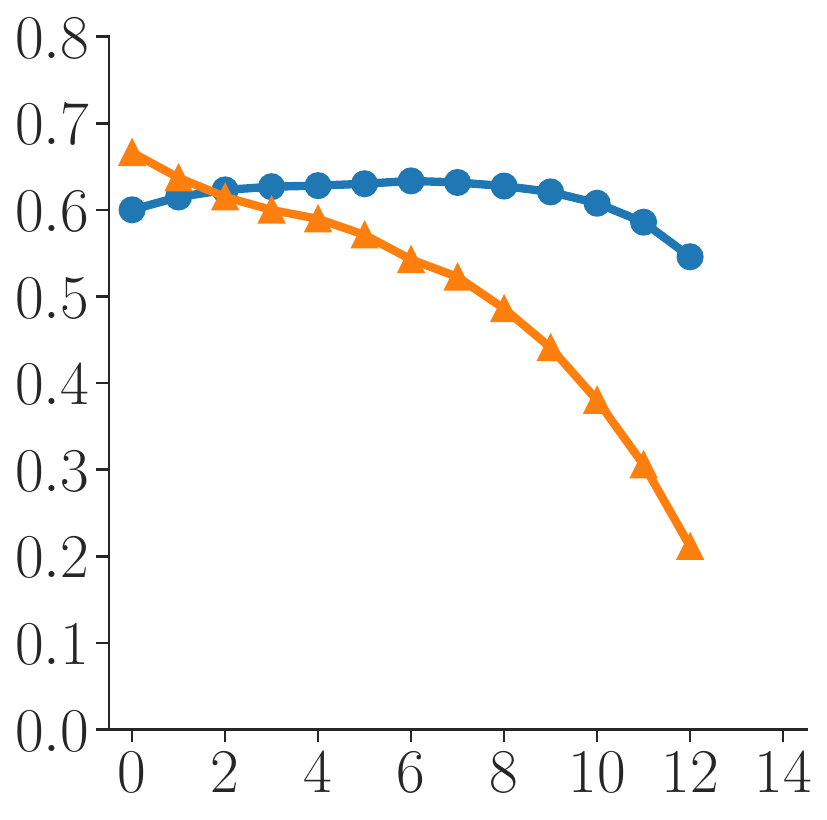} &
	\includegraphics[width=0.19\textwidth]{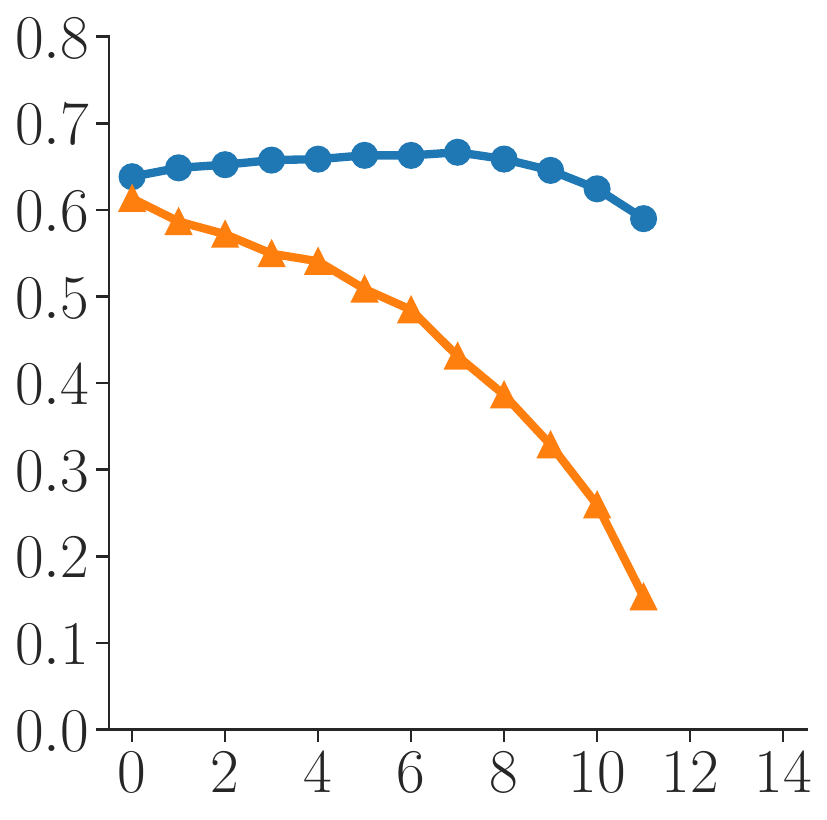} &
	\includegraphics[width=0.19\textwidth]{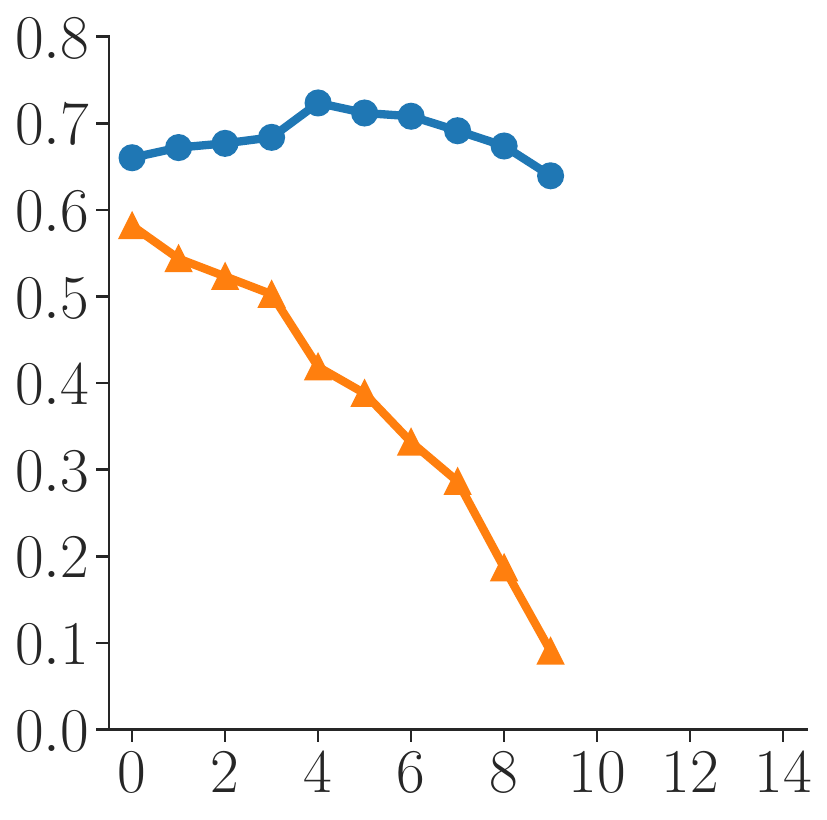} 
	\\
	$\omega = 0.1\pi$ &
	$\omega = 0.3\pi$ &
	$\omega = 0.5\pi$ &
	$\omega = 0.7\pi$ &
	$\omega = 0.9\pi$ 
\end{tabular}
	\caption{Results of a trained $hp$-(AM$)^2$R policy deployed on five different meshes.  \textit{First row:} Initial mesh passed to AFEM MDP. \textit{Second row:} Mesh at conclusion of the deployed (AM$)^2$R policy. Element shade indicates $p$ order (white = 1; black=8).  \rev{\textit{Third row:} Record of the quantities from (\ref{eq:hpobs}) observed by the policy at each step. \textit{Fourth row:} Sequences of actions taken by the policy.  At each step, elements whose error estimate is within $\theta$ of the maximum error estimate are marked for $h$-refinement, while elements whose error estimate lies between $\theta\rho$ and $\theta$ of the maximum error estimate are marked for $p$-refinement.  Visibly, the actions selected by the policy change in response to the different observed quantities on each of the meshes.}
	}
	\label{fig:hp-train}
\end{figure} 

We first train our marking policy on domains with $\omega$ drawn uniformly from $[0.1\pi,0.9\pi]$, representing a range of domains with re-entrant corners and hence solutions with a range of singularities.
The angle drawn is \textit{not} observed by the policy, as we are attempting to learn a marking policy that does not require knowledge of the global geometry.
The training is applied on the accuracy problem \cref{eq:dof_thresh} with threshold $J_\infty=10^{4}$ and observation space \cref{eq:hpobs}.
Once trained, we deploy the policy as described in~\Cref{alg:hpAM2R_2} using the same threshold from training.

In \Cref{fig:hp-train}, we show the effect of the trained policy when deployed on meshes with five different $\omega$ values, spread evenly across the sampling domain including the extreme values ($\omega=0.1\pi$ and $0.9\pi$).
The top row shows the initial state of each mesh.  The \rev{second} row shows the final mesh when the cumulative dof threshold is reached; the shade of an element indicates its assigned order (i.e.,~$p$) on a linear scale of 1 (white) to 8 (black).  
\rev{The third row shows the three observed quantities---budget, empirical mean, and empirical standard deviation as stated in (\ref{eq:hpobs})---at each step of the deployment.}
The bottom row shows the actions of the trained policy during deployment; the blue circle series indicate the $\theta$ values at each step, while the orange triangle series show the $\rho$ values.
In each case, the policy decreases the $\rho$-parameter monotonically, thus increasing emphasis on $p$-refinement, as would be expected to drive down error with maximum efficiency. 
The smooth variation of the actions within a deployment and the moderated adjustments as $\omega$ varies suggest that the policy has been well trained.

\begin{figure}   \centering
    \includegraphics[width=.33\linewidth]{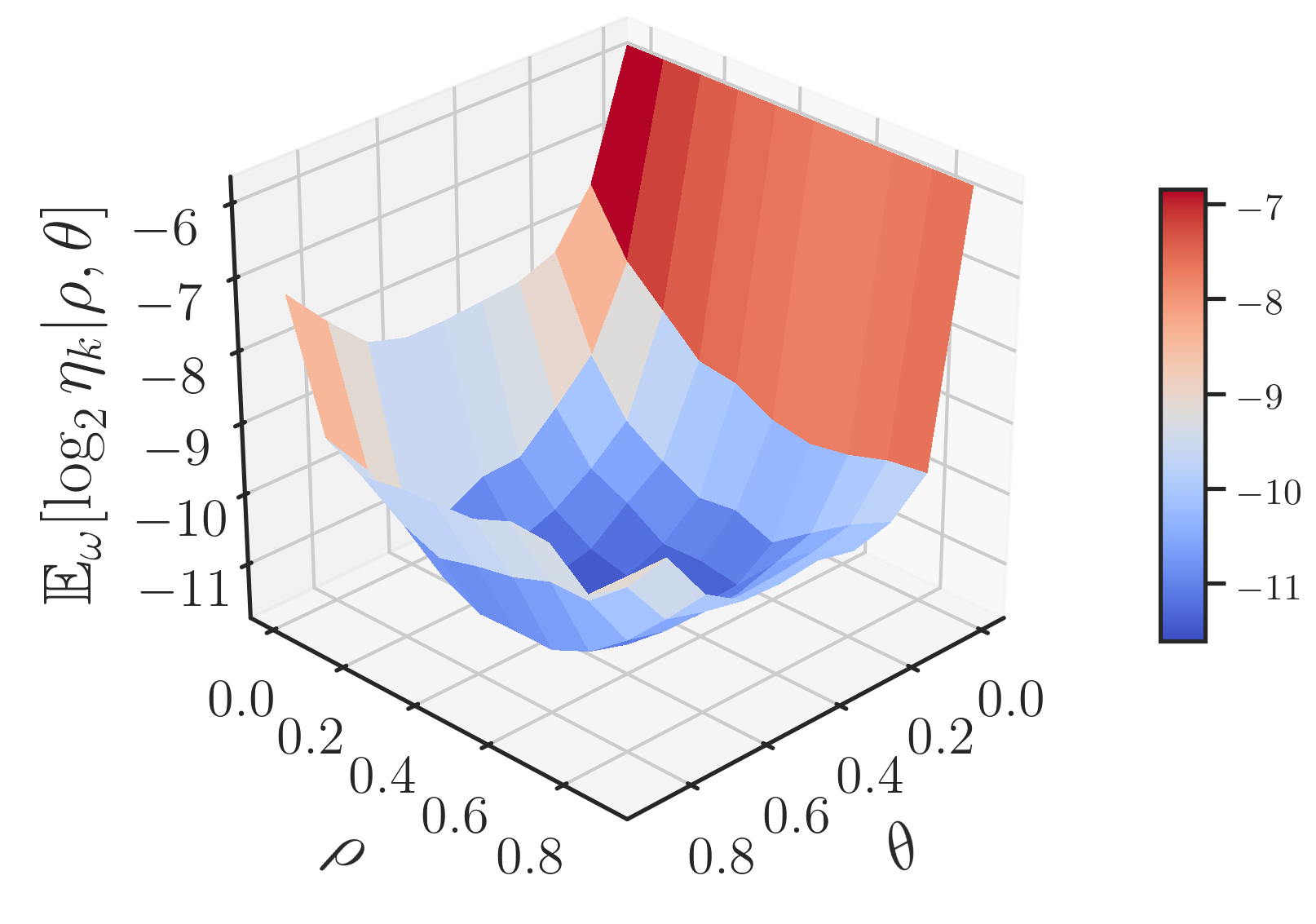}
	\qquad
   \includegraphics[width=.28\linewidth]{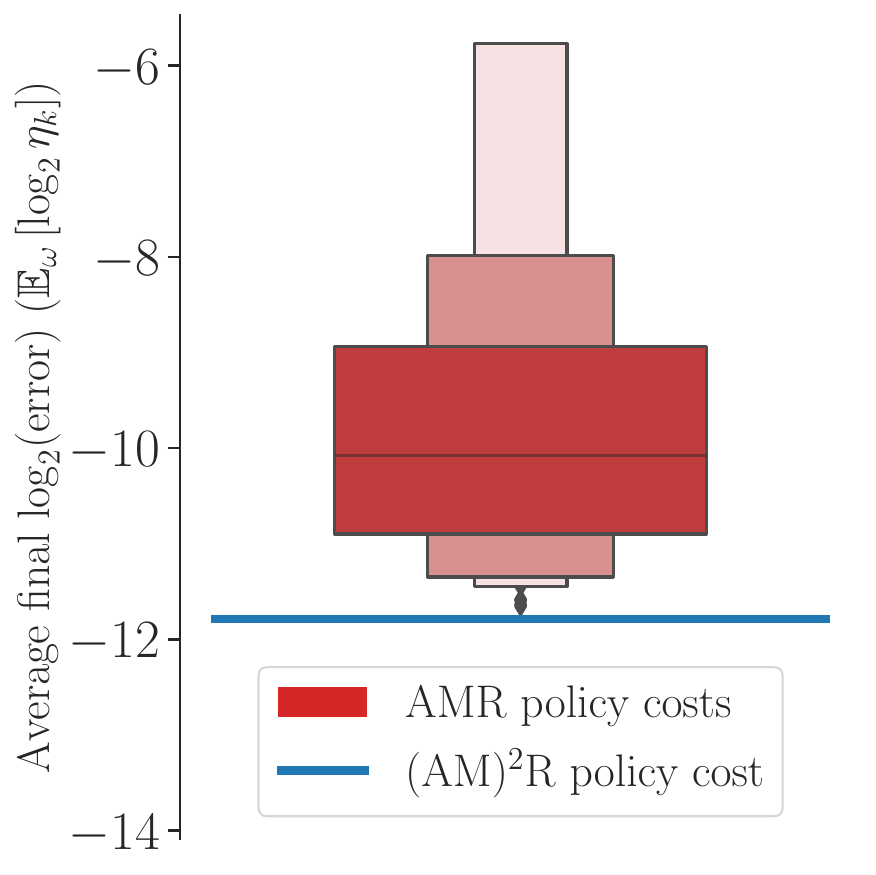}
    \caption{Comparison of the final cost averaged over deployments on 21 meshes with $\omega\in[0.1,0.9]$.    \textit{Left:} The landscape plot of this metric for 100 $(\theta,\rho)$ pairs reveals a convex shape for which $\theta=0.6$, $\rho=0.3$ gives the minimum.
    \textit{Right:} The letterbox plot compares the final average cost metric for the (AM$)^2$R policy (blue line) to those of the 100 AMR policies (red), indicating the ability of our RL methods to discover optimal $hp$-policies.
    Observe that the average final cost of the trained policy is marginally better than the best chosen fixed-parameter policy in these experiments.
    In contrast, the follow-on experiments in \Cref{ssub:ex_transfer} demonstrate the superior \emph{robustness} of the (AM$)^2$R policy.
    }
\label{fig:ex2c-letterbox}
\end{figure}

To compare the results of our deployed policy against a traditional $hp$-AMR policy, we carry out a parameter sweep with fixed choices of $\theta$ and $\rho$, emulating how a practitioner might reasonably choose to set these parameters in a typical computational setting.
We consider 100 distinct policies corresponding to $\theta,\rho\in\{0.0,\ldots,0.9\}$ and apply each policy to 21 meshes defined by $\omega\in \{(0.1 + 0.04k)\pi\}$ for $k\in\{0,\ldots, 20\}$.
We then select the policy that has the lowest \textit{average} final cost (i.e., $\mathbb{E}_\omega[\log_2\eta_\rev{k_\text{final}}]$) across all 21 meshes and consider that as the ``optimal'' traditional policy.
By this method, $\theta=0.6$ and $\rho=0.3$ are determined to be optimal.
As shown in~\Cref{fig:ex2c-letterbox}, the average final cost metric for the trained $hp$-(AM$)^2$R policy is slightly better than the best fixed parameter AMR policy.

For a more nuanced comparison between the two policies, we record the error estimate $\eta_k$ at the final mesh of the AFEM MDPs for each mesh shown in~\Cref{fig:hp-train}.  
The results are stated in the ``training'' row of~\Cref{tab:hp-results}.
To measure the improvement (or decline) in error estimate $\eta_k$ by switching from the AMR policy to the (AM$)^2$R policy, we define the \textit{improvement exponent} and \textit{improvement factor} metrics as follows:
\begin{multline}
\label{eq:impmetrics}
\text{improvement exponent}  := \log_2\left(\text{improvement factor} \right)\\ := \log_2\left(\frac{\text{final $\eta_k$, $hp$-AMR}}{\text{final $\eta_k$, $hp$-(AM$)^2$R}}\right).
\end{multline}
Thus, switching to (AM$)^2$R is favorable if the improvement factor is greater than 1 or, equivalently, if the improvement exponent is greater than 0.

\begin{table}
\small
\centering
\setlength\tabcolsep{2mm}
\begin{tabular}{ll|llll}
&                        & Final error    		& Final error          & Improvement & Improvement \\
&                        & estimate ($\eta_k$),  & estimate ($\eta_k$), & factor 	 & exponent  \\
& 						& $hp$-AMR   			& $hp$-(AM$)^2$R \\[1mm]
                        \hline
\text{}\\[-2mm]
\multirow{5}{*}{\rotatebox[origin=c]{90}{Training}} & Disk, $\omega=0.1 \pi$  &  \texttt{0.002613}  &  \texttt{0.001665}  &  \texttt{1.57}   & \texttt{ 0.65}  \\
& Disk, $\omega=0.3 \pi$  &  \texttt{0.001018}  &  \texttt{0.001106}  &  \texttt{0.92}   & \texttt{-0.12}  \\
& Disk, $\omega=0.5 \pi$  &  \texttt{0.000338}  &  \texttt{0.000280}  &  \texttt{1.21}   & \texttt{ 0.27}  \\
& Disk, $\omega=0.7 \pi$  &  \texttt{0.000110}  &  \texttt{0.000081}  &  \texttt{1.36}   & \texttt{ 0.44}  \\
& Disk, $\omega=0.9 \pi$  &  \texttt{0.000027}  &  \texttt{0.000022}  &  \texttt{1.21}   & \texttt{ 0.27}  \\[1mm]
\hline
\text{}\\[-2mm]
\multirow{7}{*}{\rotatebox[origin=c]{90}{Testing}}
& L-shape                 &  \texttt{0.000258}  &  \texttt{0.000190}  &  \texttt{1.36}   & \texttt{ 0.44}  \\
& Staircase               &  \texttt{0.002577}  &  \texttt{0.001609}  &  \texttt{1.60}   & \texttt{ 0.68}  \\
& Staircase-Tri           &  \texttt{0.005719}  &  \texttt{0.005601}  &  \texttt{1.02}   & \texttt{ 0.03}  \\
& Star                    &  \texttt{0.001137}  &  \texttt{0.001307}  &  \texttt{0.87}   & \texttt{-0.20}  \\
& Disk, $\omega=1.5 \pi$  &  \texttt{0.000015}  &  \texttt{0.000009}  &  \texttt{1.69}   & \texttt{ 0.76}  \\
& Fichera                 &  \texttt{0.010979}  &  \texttt{0.007431}  &  \texttt{1.47}   & \texttt{ 0.56}  \\
\end{tabular}
	\caption{Final error estimates for the optimal traditional $hp$-AMR policy ($\theta=0.6$, $\rho=0.3$) and an (AM$)^2$R policy on a variety of meshes.  The first five rows (disk domains with $\omega\in[0.1\pi,0.9\pi]$) were included in the training regimen for the (AM$)^2$R policy but the remaining rows were not, thus demonstrating the robustness or generalizability of the trained policy.  The (AM$)^2$R policy outperforms the traditional fixed-parameter policy in all but one instance from the training set ($\omega=0.3\pi$) and one instance from the test set (Star).}
	\label{tab:hp-results}
\end{table} 

For each $\omega$ value except $0.3\pi$, we see improvement factors over 1.2, meaning the final error estimate is reduced by a factor of at least 1.2 when switching to the (AM$)^2$R policy.
Since all other variables were held constant, such improvement is directly attributable to the ability of the policy to dynamically adjust the marking parameter values.
For $\omega=0.3\pi$, the AMR policy has a slightly better final error estimate, reflecting the fact that improved performance on average does not ensure improved performance in every instance. 
Still, for selecting a policy that performs well over a range of geometries, the (AM$)^2$R policy is certainly the better choice.

\subsection{Deploying a trained policy in new settings} \label{ssub:ex_transfer}

We next deploy the trained (AM$)^2$R policy, without modification, on different types of domains, PDE problems, and dimensions.
In \Cref{fig:hp-deploy}, we show five ``testing'' domains, none of which were used when training the (AM$)^2$R policy.
For the L-shape domain and $\omega=1.5\pi$ cases, we use the same Laplace problem defined in \Cref{ssub:ex1a,ssub:ex2c}; for the other domains we use Poisson's equation with zero Dirichlet boundary conditions, i.e.,
\begin{equation}
\label{eq:PoissonEqn}
	\Delta u = 1
	\quad \text{in } \Omega,
	\qquad
	u = 0 \quad \text{on } \partial\Omega.
\end{equation}
\Cref{fig:hp-deploy} recapitulates the conventions from~\Cref{fig:hp-train}.
The general trends of the policy actions on these domains are similar to those of the disk domains seen during training in \Cref{ssub:ex2c}, namely, $\rho$ trends towards zero while $\theta$ changes only slightly. 
Visibly, the rate at which both parameters change is dependent on the mesh, and, implicitly, the pace of exhausting the relative budget grows; cf.~\cref{eq:AccuracyBudget}.

\begin{figure}
\centering
\setlength\tabcolsep{0mm}
\begin{tabular}{ccccc} 
{\rotatebox[origin=l]{90}{Initial mesh}}
	\includegraphics[width=0.19\textwidth]{initial_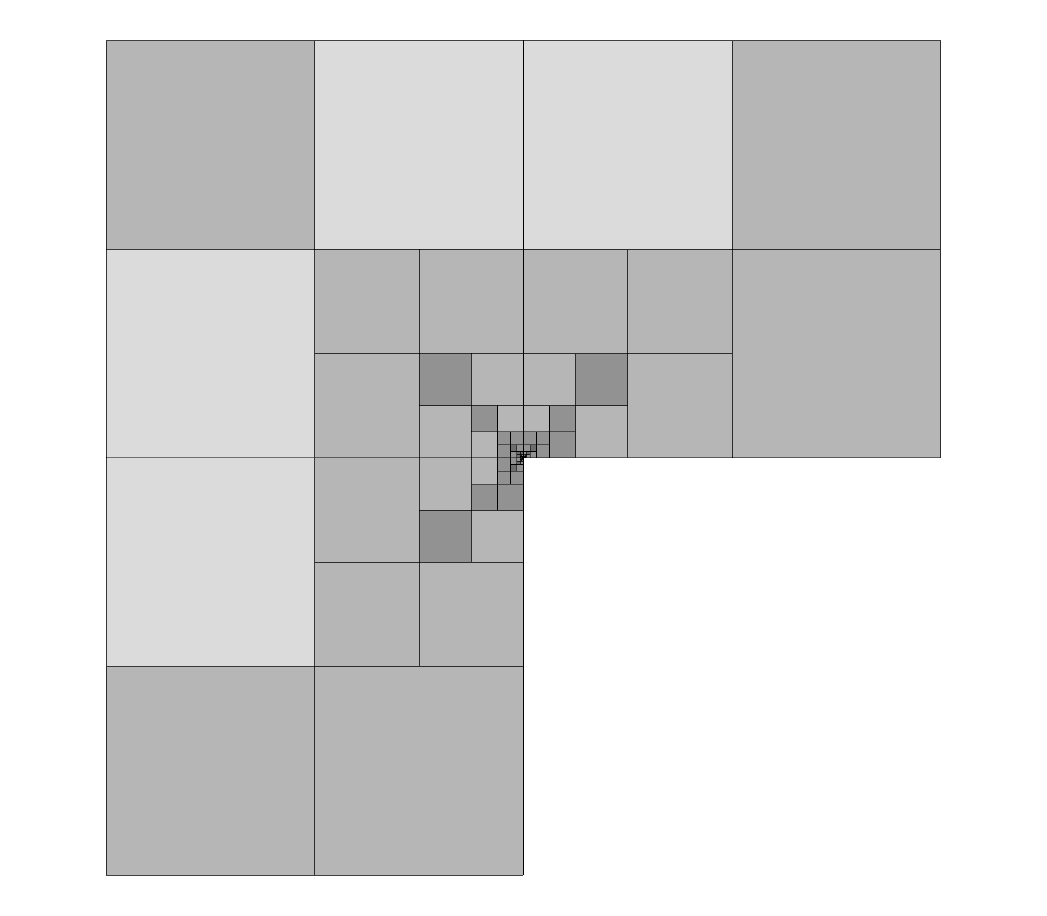} &
	\includegraphics[width=0.19\textwidth]{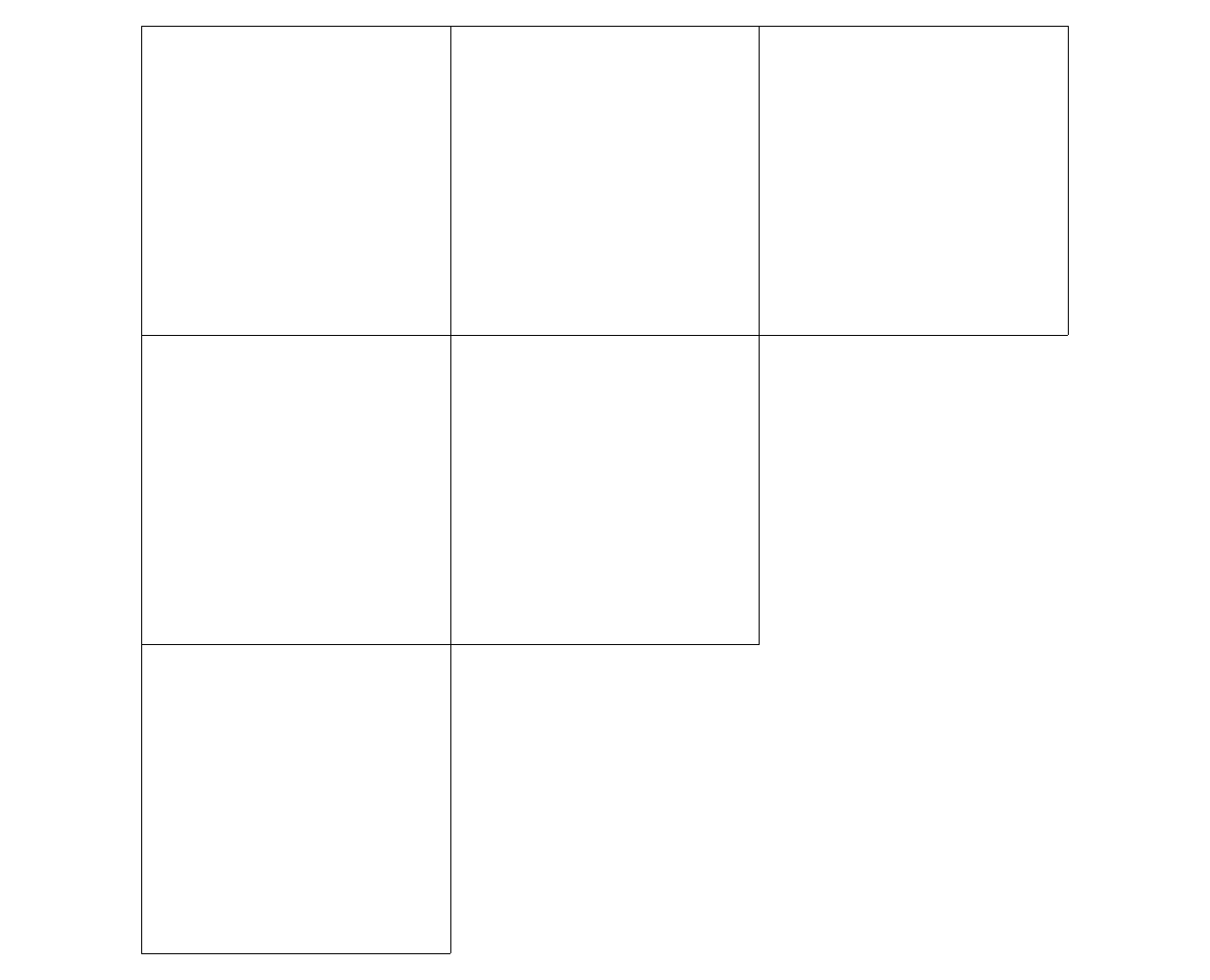} &
	\includegraphics[width=0.19\textwidth]{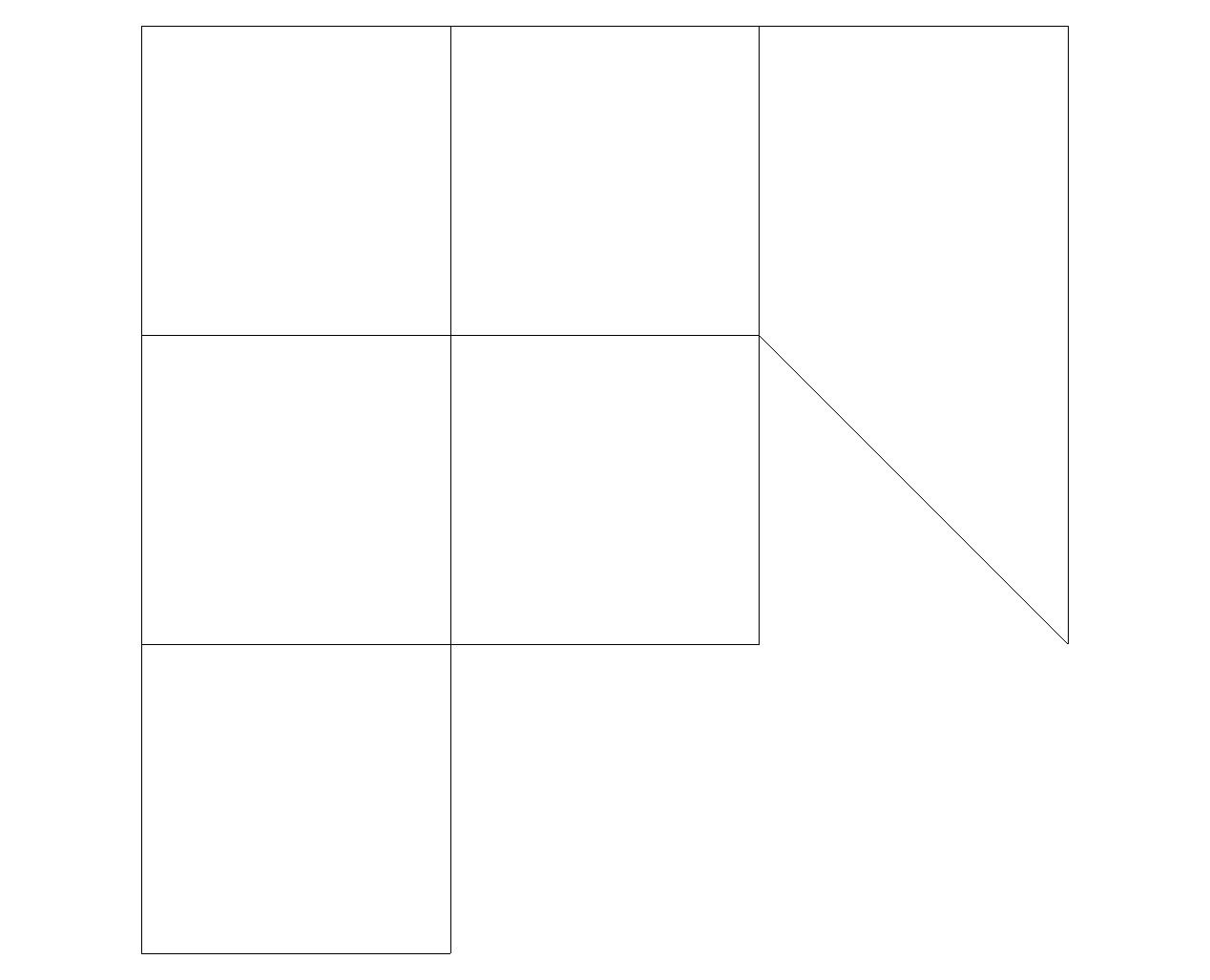} &
	\includegraphics[width=0.19\textwidth]{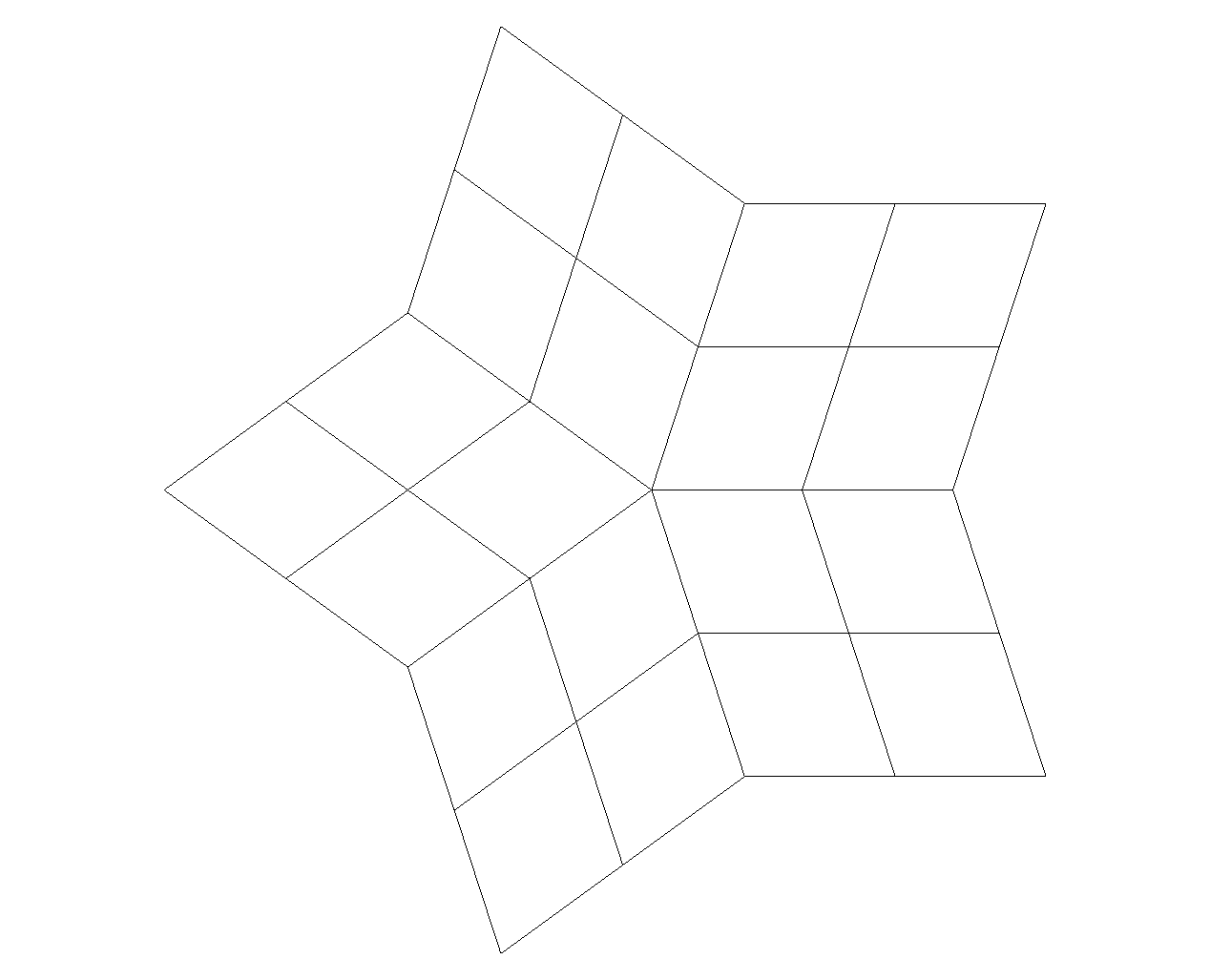} &
	\includegraphics[width=0.19\textwidth]{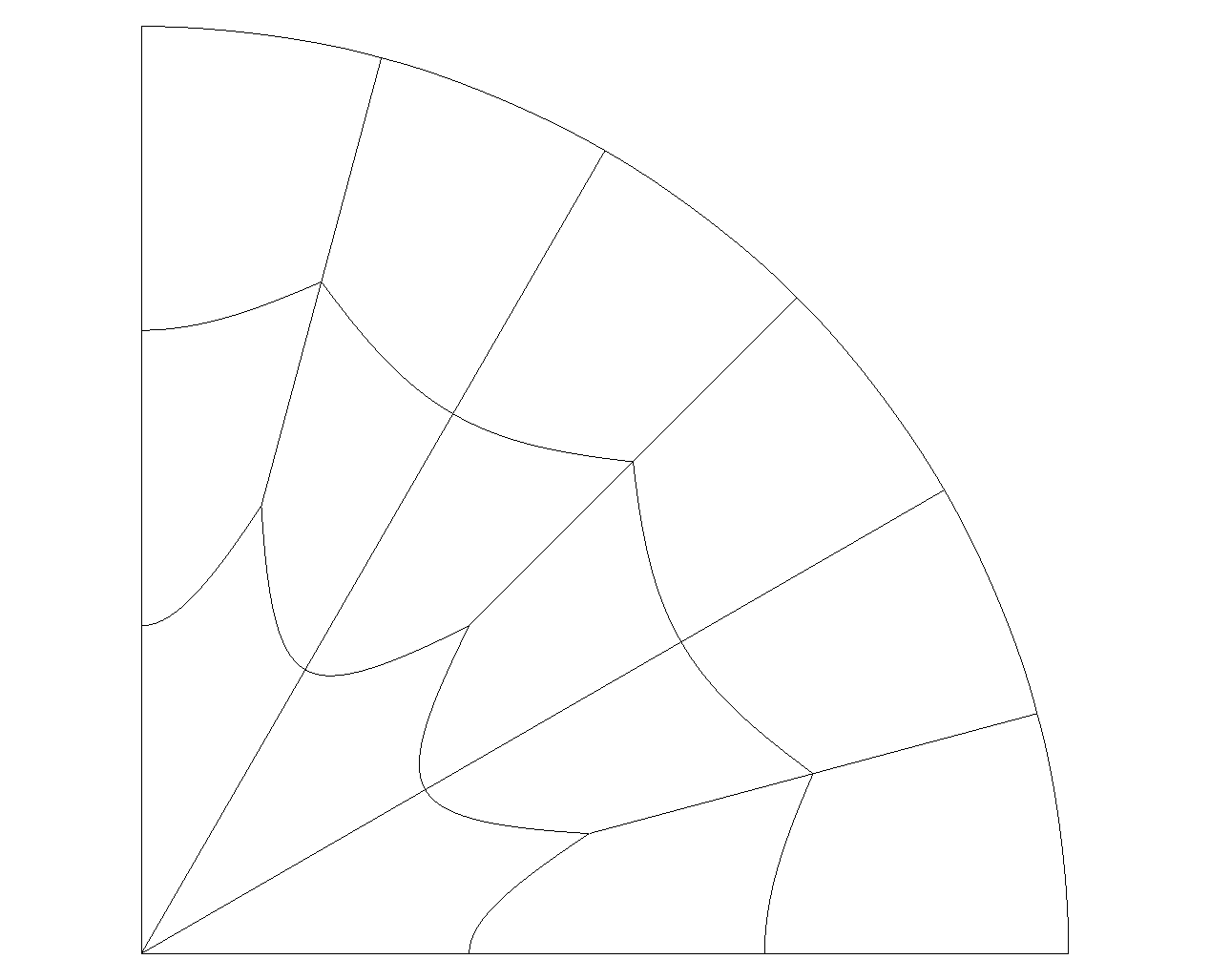} 
	\\
{\rotatebox[origin=l]{90}{~~Final mesh}}
	\includegraphics[width=0.19\textwidth]{lshape.png} &
	\includegraphics[width=0.19\textwidth]{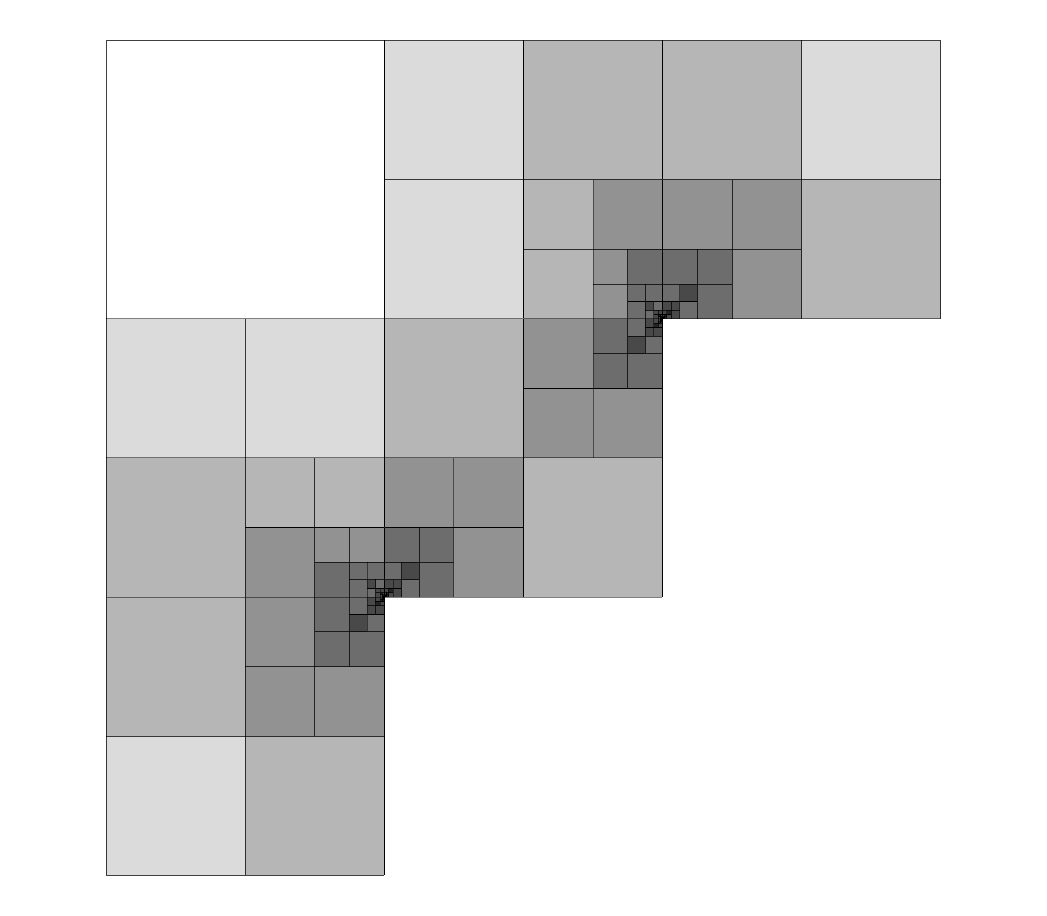} &
	\includegraphics[width=0.19\textwidth]{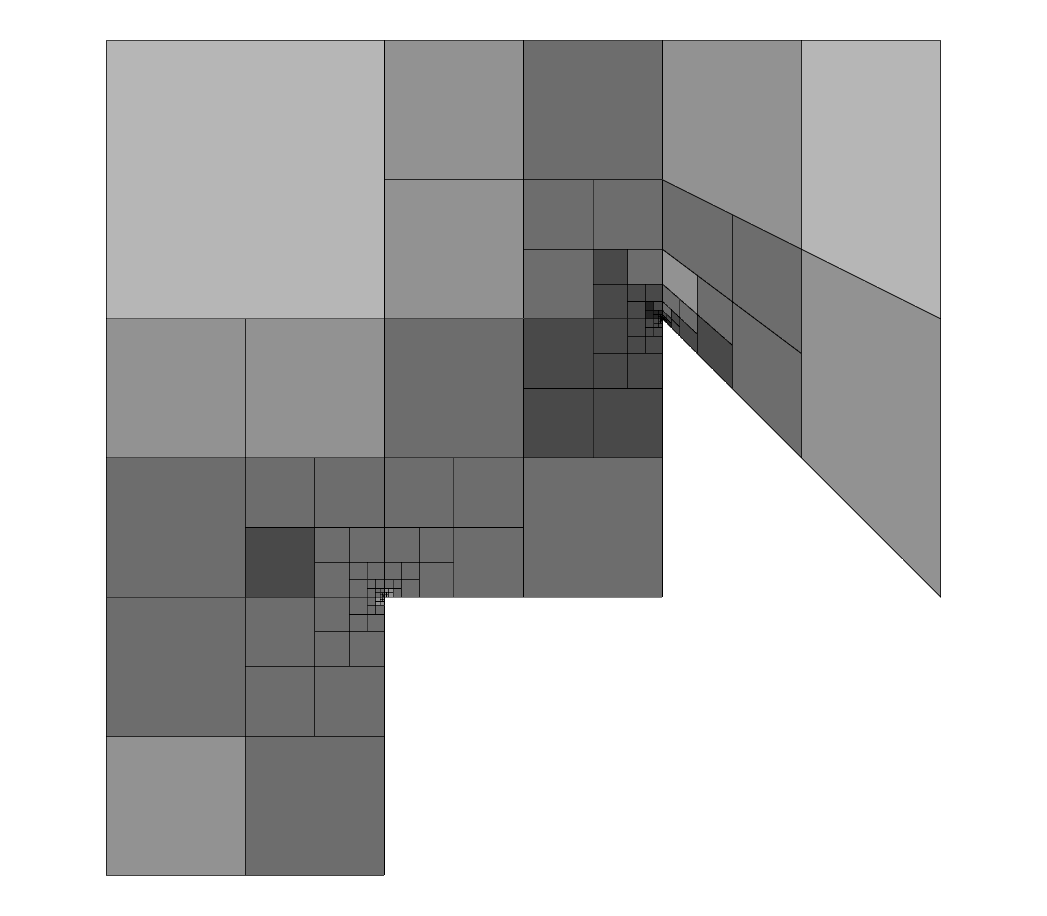} &
	\includegraphics[width=0.19\textwidth]{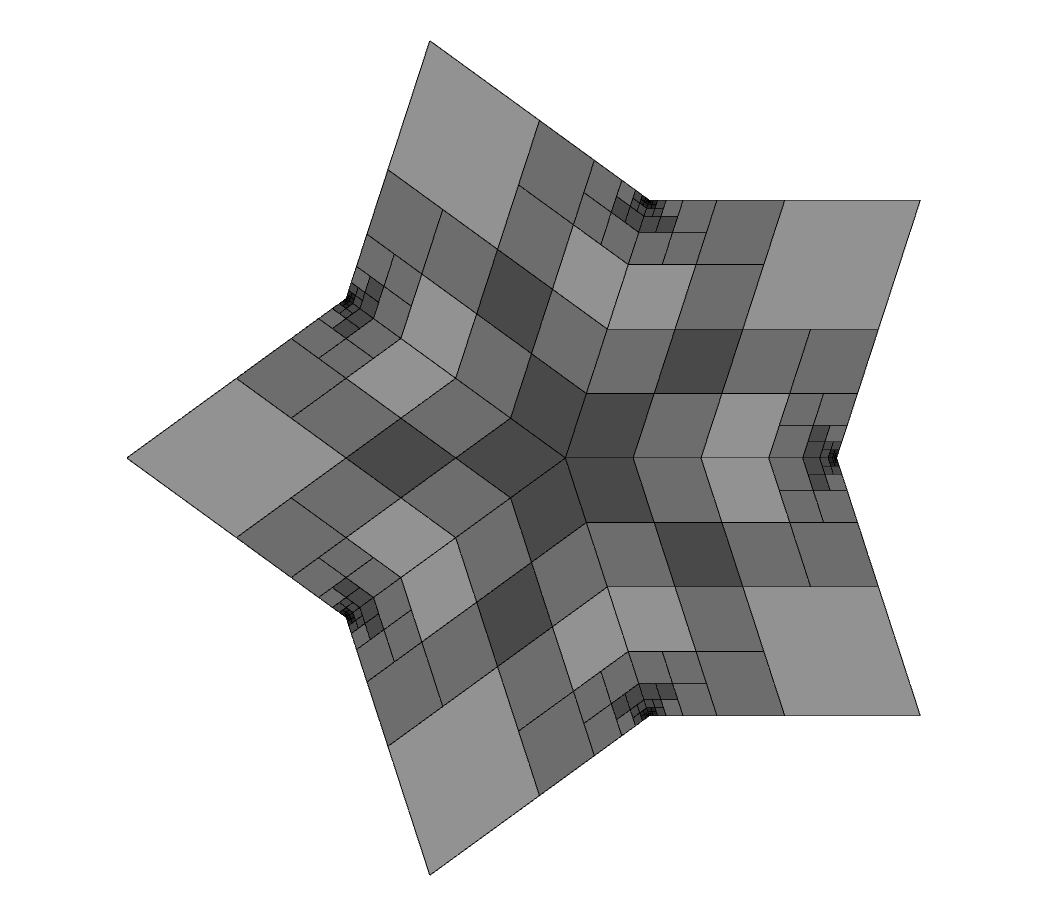} &
	\includegraphics[width=0.19\textwidth]{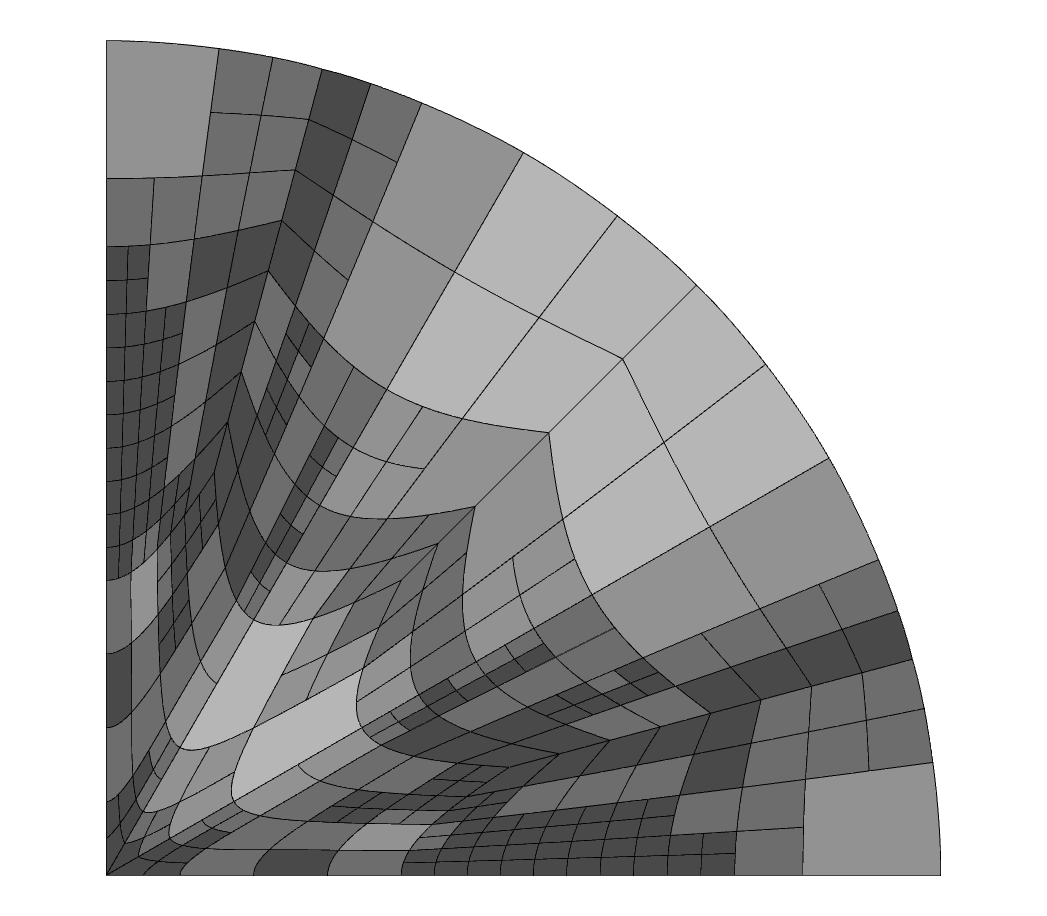}
	\\	
	{\rotatebox[origin=l]{90}{~~Observables}}
	\includegraphics[width=0.19\textwidth]{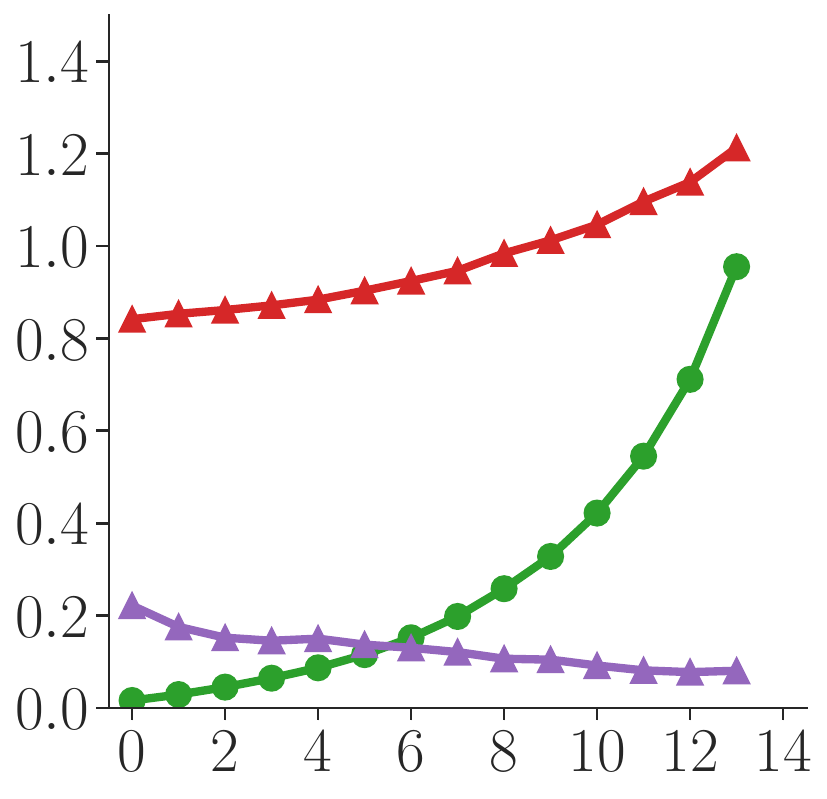} &
	\includegraphics[width=0.19\textwidth]{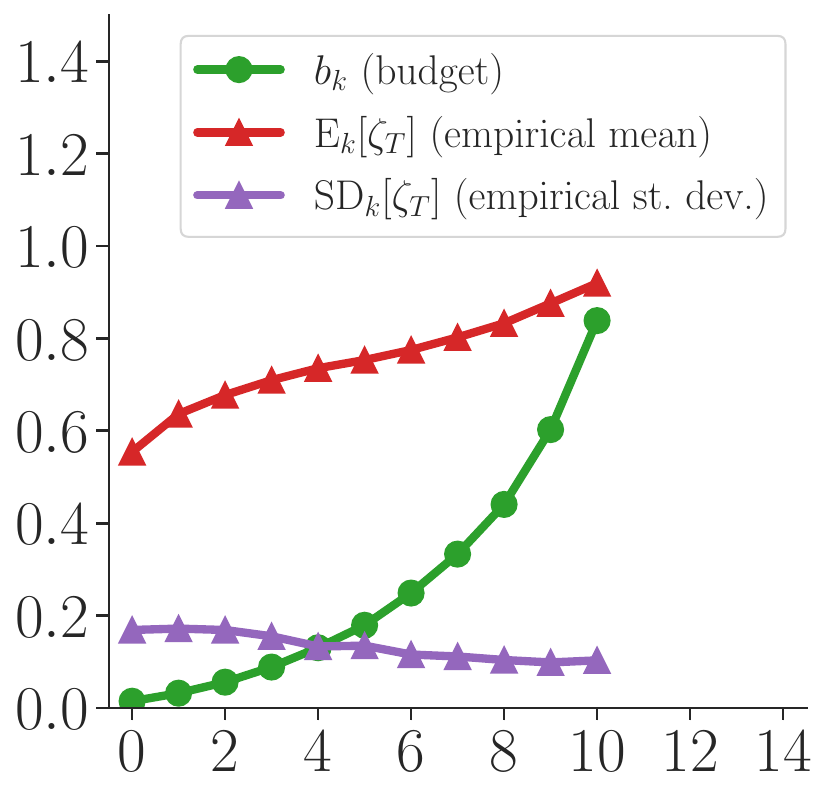} &
	\includegraphics[width=0.19\textwidth]{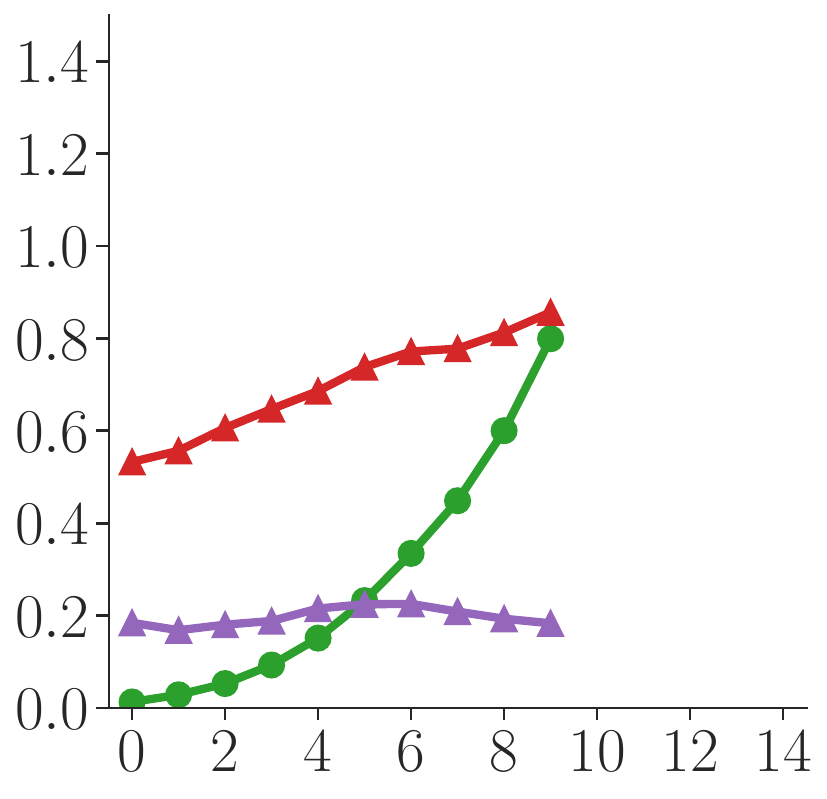} & 
	\includegraphics[width=0.19\textwidth]{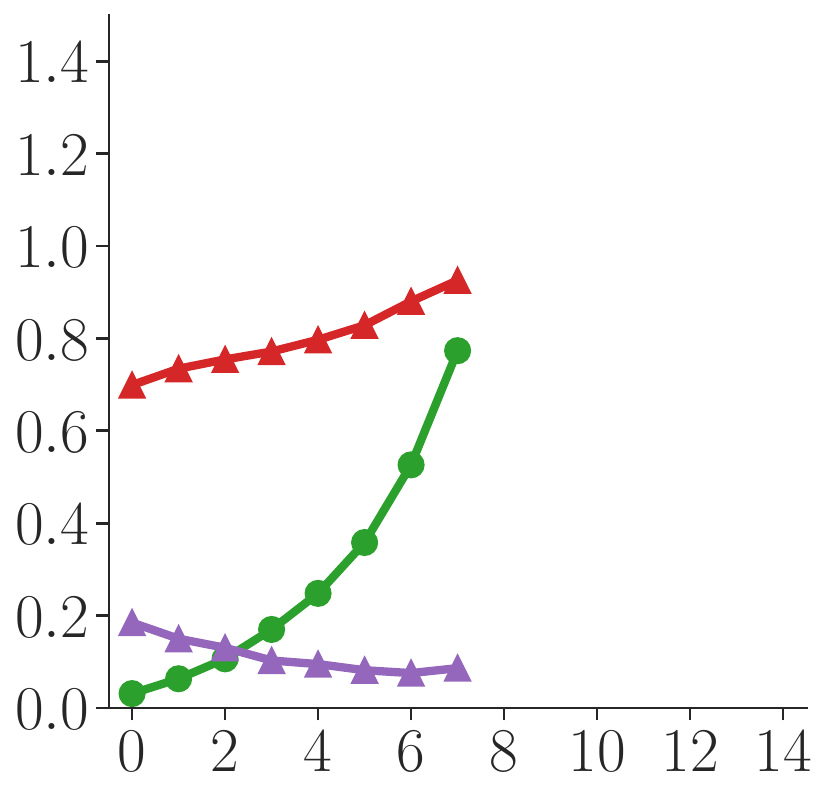} &
	\includegraphics[width=0.19\textwidth]{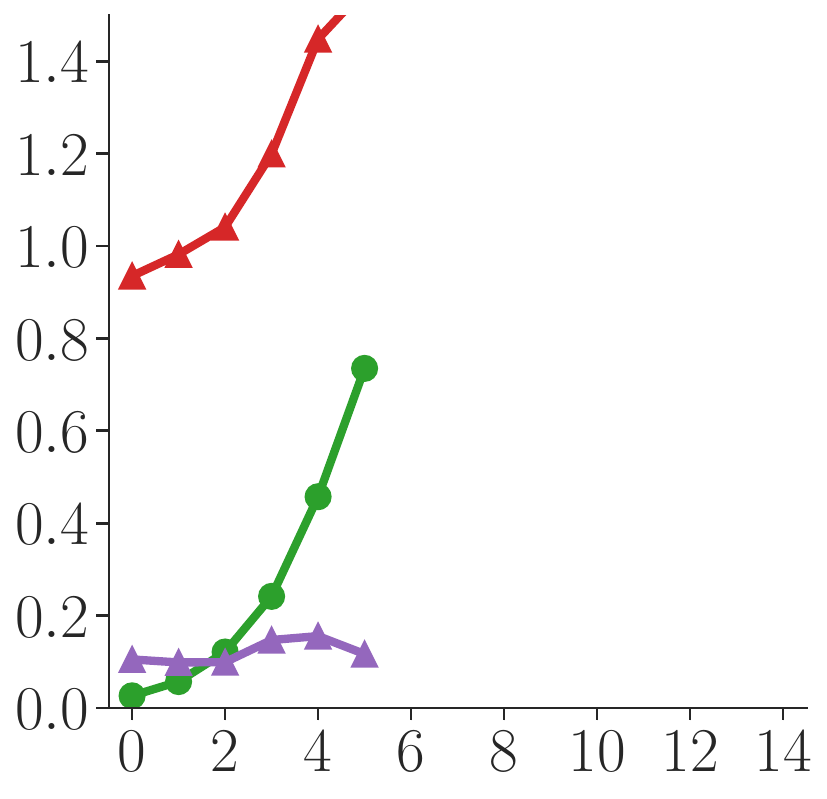} 
	\\
{\rotatebox[origin=l]{90}{~~Policy actions}}
	\includegraphics[width=0.19\textwidth]{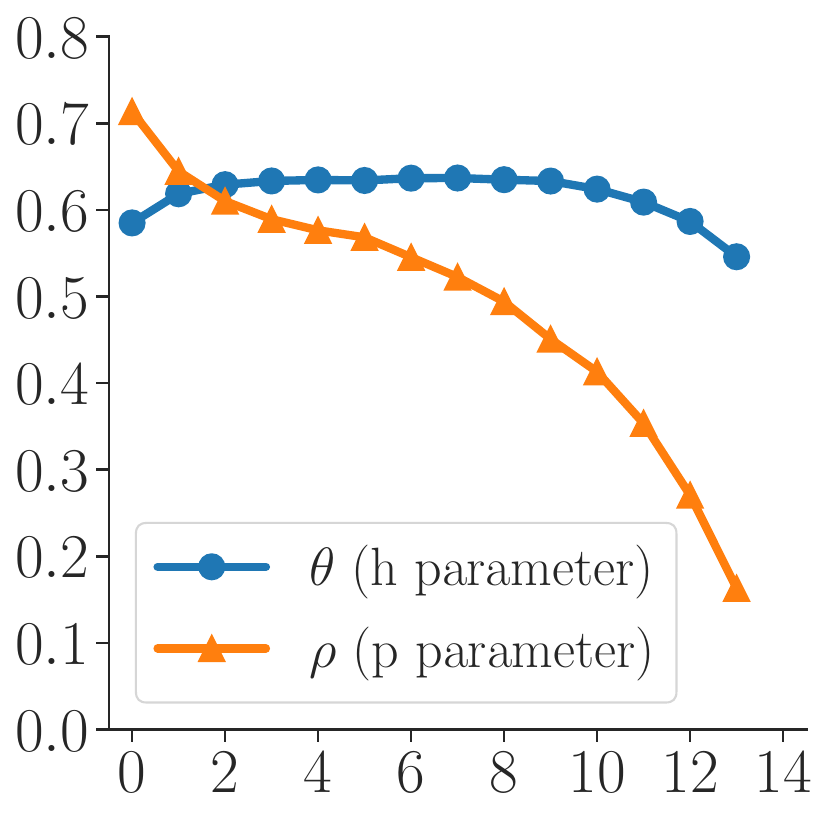} &
	\includegraphics[width=0.19\textwidth]{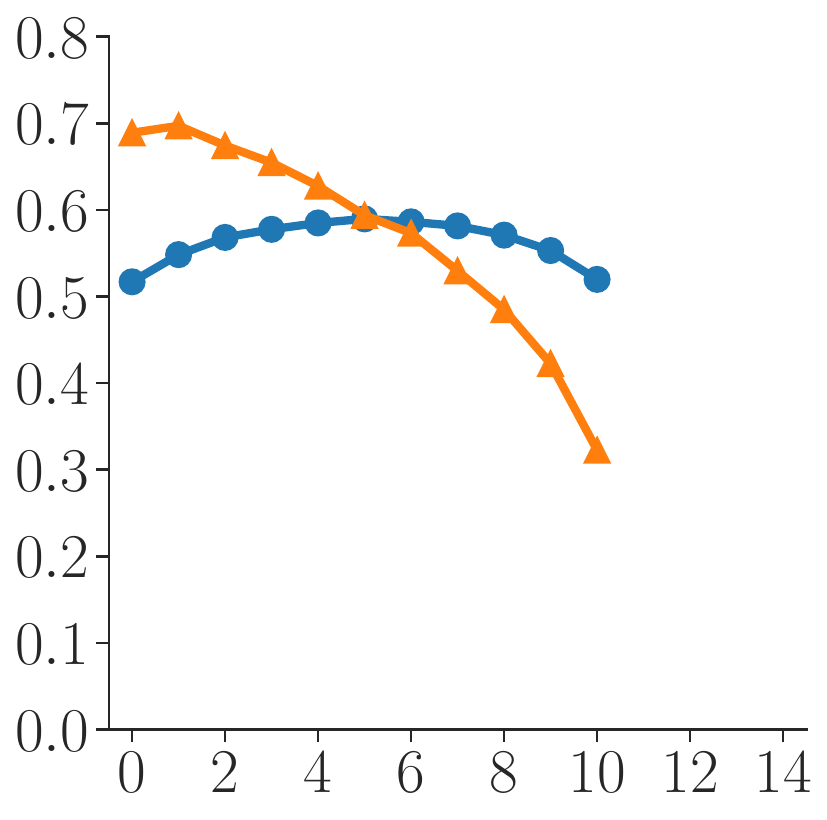} &
	\includegraphics[width=0.19\textwidth]{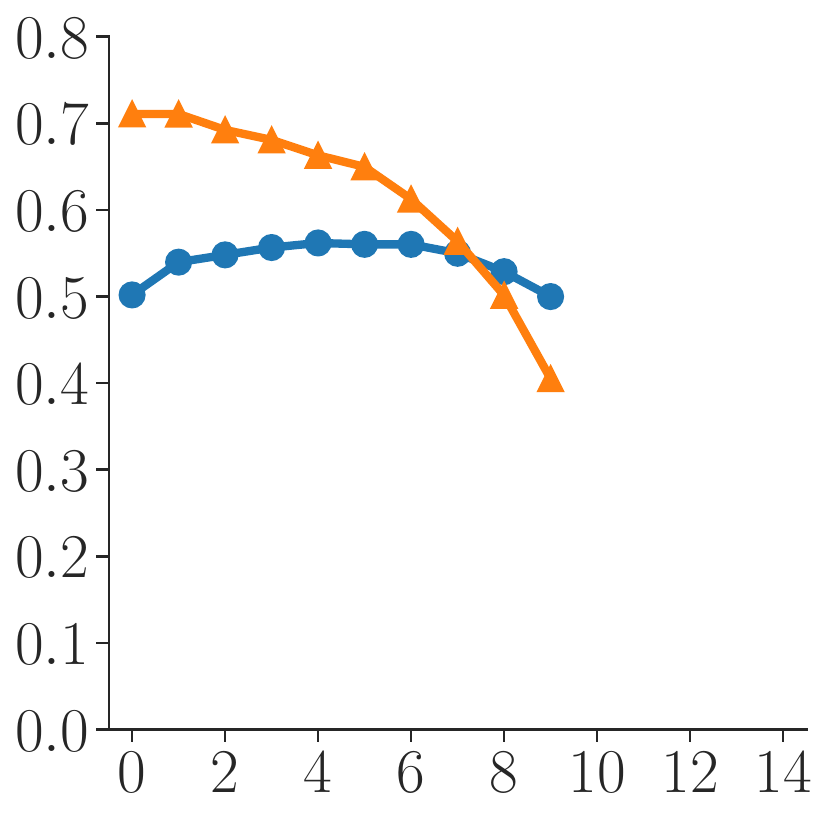} & 
	\includegraphics[width=0.19\textwidth]{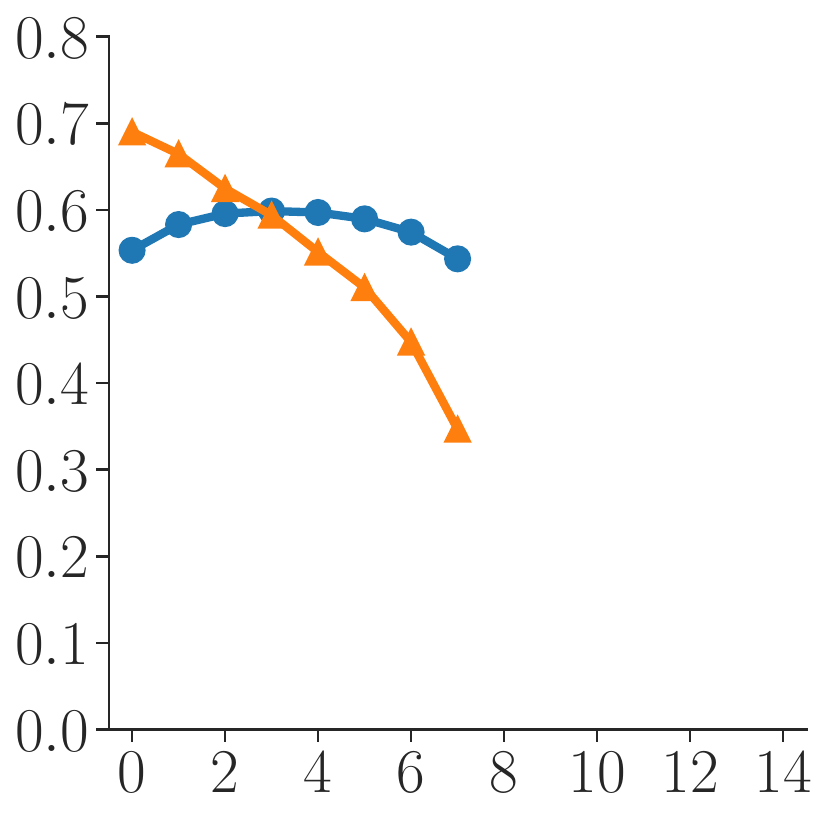} &
	\includegraphics[width=0.19\textwidth]{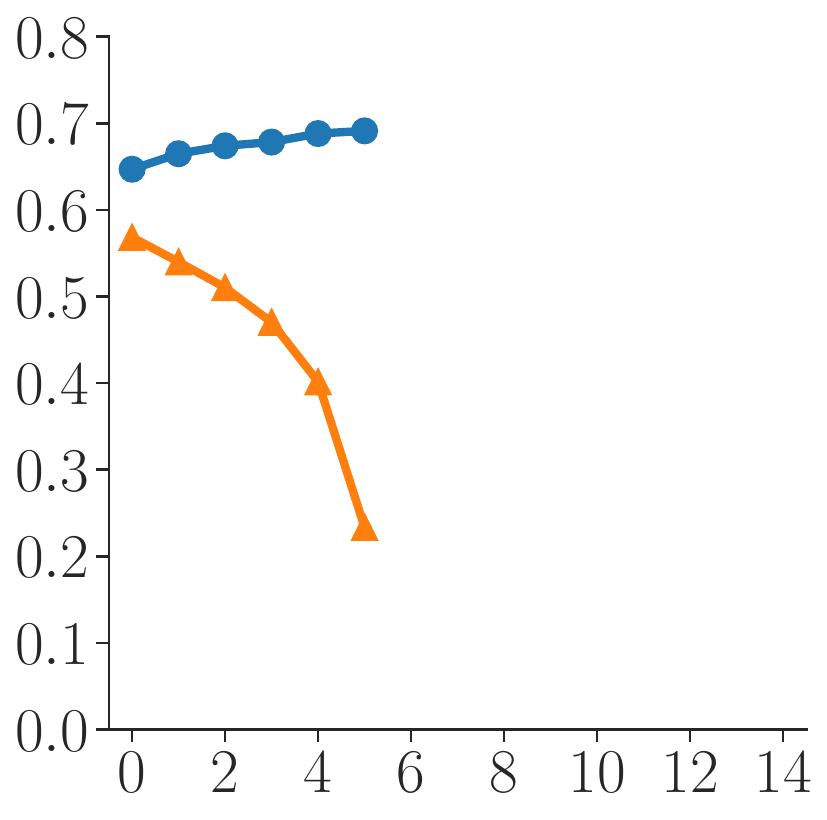} 
	\\	
	L-shape &
	Staircase &
	Staircase-Tri &
	Star &
	$\omega=1.5\pi$
	\end{tabular}
	\caption{Results of the trained $hp$-(AM$)^2$R policy deployed on five meshes, none of which were part of the meshes from the training regime. Element shade indicates $p$ order (white = 1; black=8). On the L-shaped and $\omega=1.5\pi$ disk domains we solve the Laplace problem~\cref{eq:LaplaceEqn}, and on the remaining domains we solve the Poisson problem~\cref{eq:PoissonEqn}.  The responsiveness of the trained policy to changes in the geometry and PDE setting is evident from the variety of policy actions observed.
}
	\label{fig:hp-deploy}
\end{figure}

In the ``testing'' row of~\Cref{tab:hp-results} we show the final error estimates for the five meshes from \Cref{fig:hp-deploy}.
The (AM$)^2$R policy produces a lower final global error estimate in every case except the Star mesh, for which it still produces an error estimate of equivalent order of accuracy.
\rev{For the Star mesh case, we speculate that the presence of re-entrant corners on opposing sides of the domain induces a pre-asymptotic response to refinement that is different from those of the other initial meshes; such effects could be explored further in future work.}
The example of a disk with $\omega=1.5\pi$ is notable; the solution to this problem has no singularities, and thus an optimal marking policy should move toward uniform $hp$-refinement quickly.
In agreement with this intuition, the (AM$)^2$R policy actions quickly move toward $\rho=0$ and the largest improvement over the fixed parameter AMR policy is observed.

\begin{figure}
\centering
\setlength\tabcolsep{0mm}
\begin{tabular}{ccccc} 
	\includegraphics[width=0.22\textwidth]{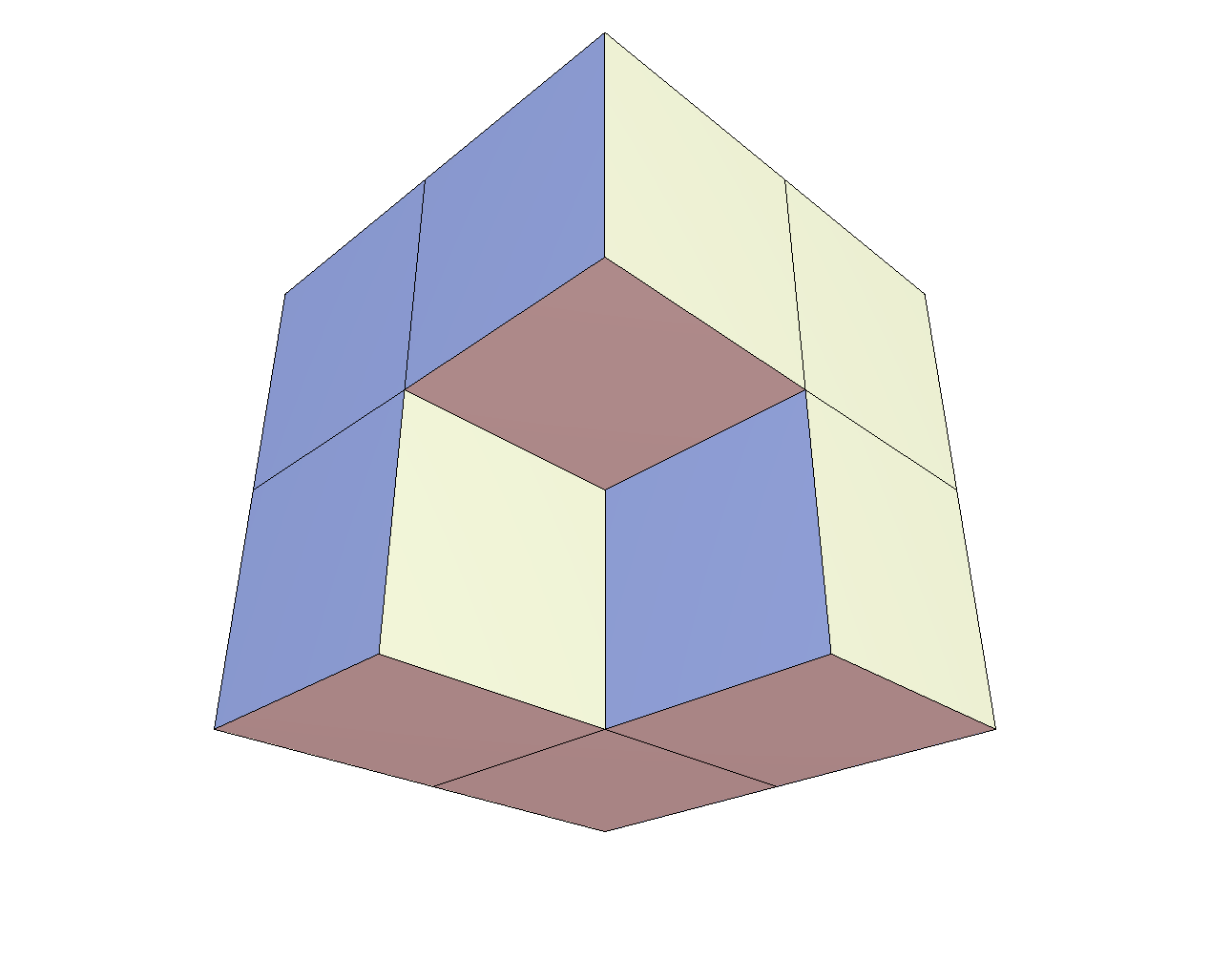} &
		\includegraphics[width=0.22\textwidth]{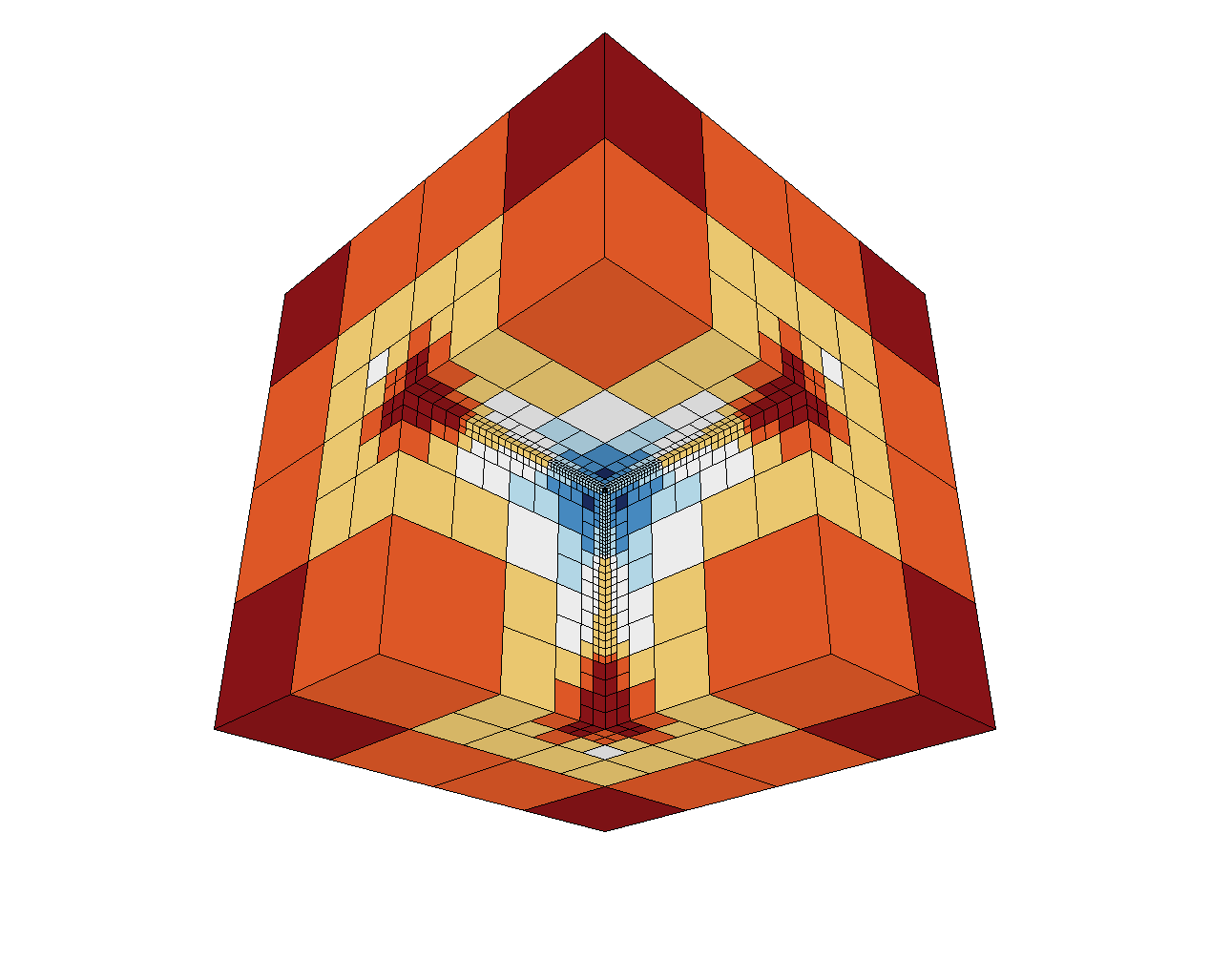} &
				~\includegraphics[width=0.02\textwidth]{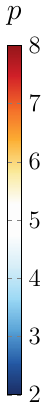}~ &
							\includegraphics[width=0.22\textwidth]{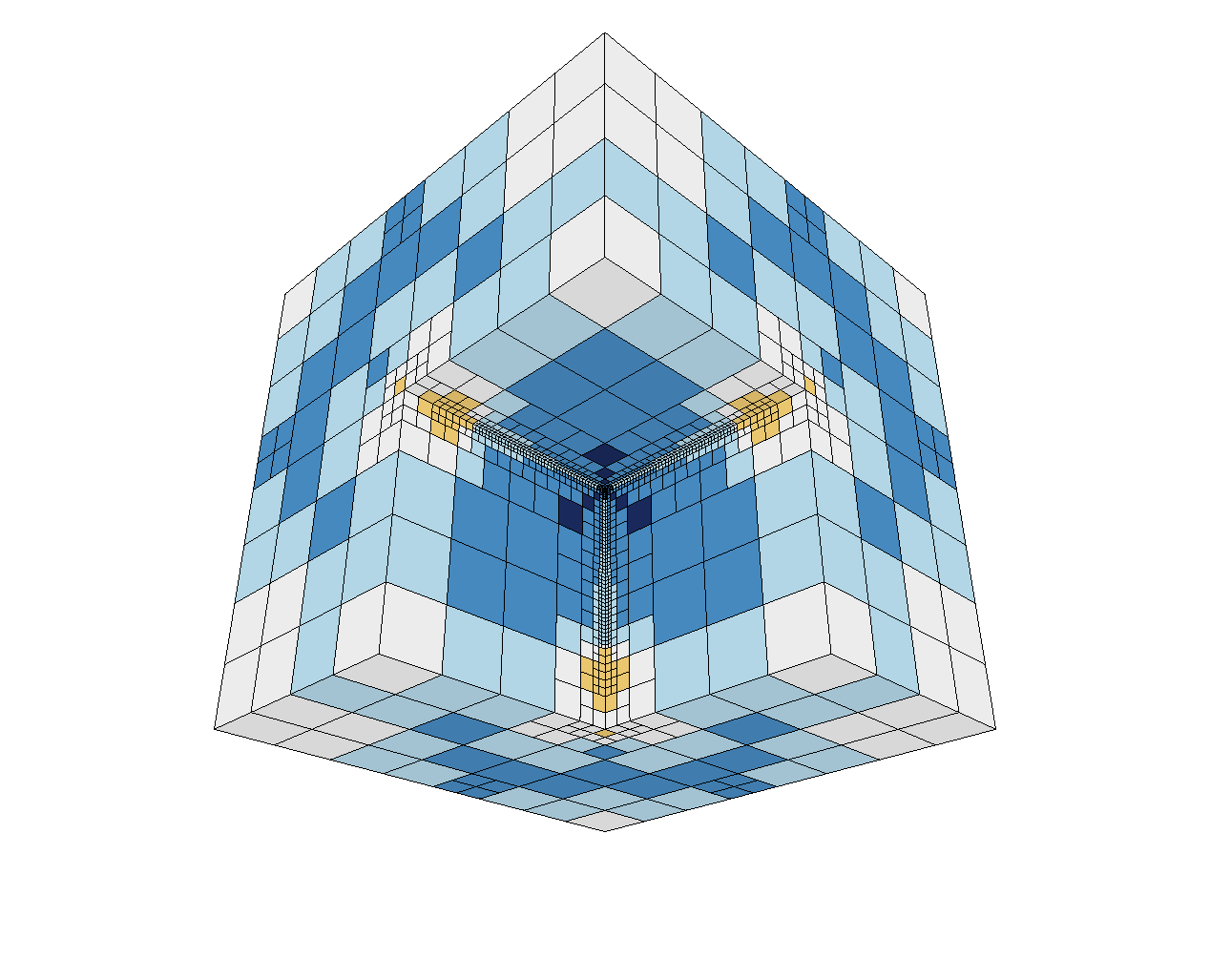} &~
	\includegraphics[width=0.19\textwidth]{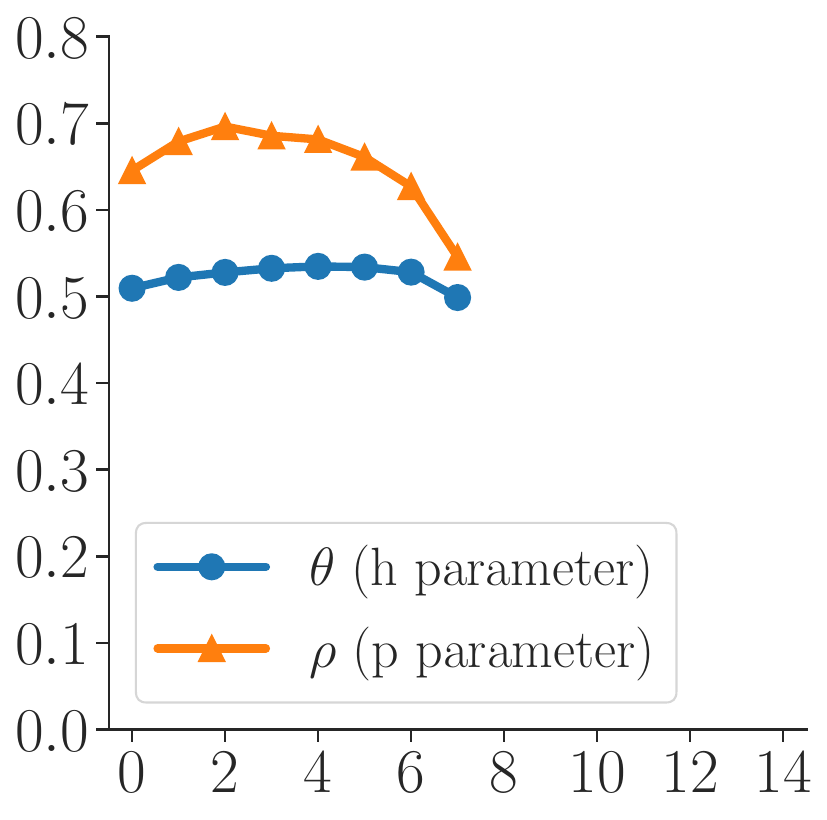}  \\
 Initial mesh & AMR policy & & (AM$)^2$R policy & Policy actions
\end{tabular}
	\caption{Solving the Poisson problem on a 3D mesh of the Fichera corner.  The visualization of the the final meshes indicates that the fixed-parameter AMR policy refines higher in $p$ than the adaptive-parameter (AM$)^2$R policy; the color bar indicates the order of $p$ ranging from $p=2$ in blue to $p=8$ in red.  The (AM$)^2$R policy attains a lower final error estimate than the AMR policy for the same cumulative dof threshold~(cf.~\Cref{tab:hp-results}).}
	\label{fig:hp-fichera}
\end{figure}

We carry out an additional experiment in \rev{generalizability} by deploying the (AM$)^2$R policy on a 3D mesh of a Fichera corner, consisting of seven cubes arranged to form a larger cube with one octant missing; see~\Cref{fig:hp-fichera}.
We again solve the Poisson problem \cref{eq:PoissonEqn}. 
To accommodate the faster rate of growth in dofs in 3D, we raise the cumulative dof threshold $J_\infty$ to $5\times 10^5$; all training and previous testing had $J_\infty=10^4$.
The benefit of not including geometric information in our observation space is now realized as the (AM$)^2$R policy immediately works without modification or additional training.
Furthermore, as indicated in the last row of~\Cref{tab:hp-results}, the (AM$)^2$R policy outperforms the optimal fixed-parameter policy with an improvement factor of 1.47.
\section{Discussion} \label{sec:future_research_directions}

In this work, we focused on learning a map from normalized statistics of local error estimates to marking parameters.
These statistics only partially characterize the space of discretization states such maps should aim to probe.
Therefore, future research may involve learning higher-dimensional maps involving more sophisticated simulation observables.
Doing so may lead to better performing marking policies for $hp$-refinement or new policies for, e.g., goal-oriented AMR.

Another important future research direction is the development of marking policies for time-dependent PDEs.
Unpublished experiments by the authors have shown that the approach presented here can generalize to such settings and the associated training can be performed using, e.g., policy gradient methods for infinite-horizon environments (cf. \cite[Section~13.6]{sutton2018reinforcement}).
Ongoing work by the authors is dedicated to developing refinement policies for time-dependent PDEs.

Finally, we believe the true benefits of this new AFEM paradigm lie in \emph{transfer learning}.
That is, training on inexpensive model problems with the aim of improving performance on more expensive target problems; cf.~\Cref{ssub:ex_transfer}.
Future work should focus in part on innovating new tools and techniques to efficiently train robust marking policies for more complicated transfer learning applications.

\section{Conclusion} \label{sec:conclusion}

In this work, we introduced a doubly adaptive AFEM paradigm that treats the selection of marking parameters as a state-dependent decision made by a marking policy which can optimized with policy gradient methods from the reinforcement learning literature.
We then demonstrated the potential of this novel paradigm for $h$- and $hp$-refinement applications via experiments on benchmark problems.

In our first experiment (cf.~\Cref{ssub:ex1a}), we focused on $h$-refinement with the well-studied L-shaped domain problem~\cite{mitchell2013collection}.
Here, we demonstrated that the efficiency of AFEM can be significantly improved through adaptive parameter selection.
In particular, we witnessed the superior efficiency of a pre-trained adaptive marking policy when compared against the best performing counterpart fixed-parameter policy.
In this experiment, we also witnessed nearly twice the efficiency when compared against the median-performing fixed-parameter policy.

In our second and third experiments (cf.~\Cref{ssub:ex2c,ssub:ex_transfer}, respectively), we considered learning a robust marking policy for $hp$-refinement over a distribution of model problems.
The first of these experiments demonstrated that our chosen observation space is expressive enough to deliver policies with superior average performance across a distribution of training problems.
The second of these experiments demonstrated that such robust policies can also deliver superior performance on \emph{unseen} model problems.
Indeed, after only training a marking policy on $2$D Poisson equations whose domains have a single re-entrant corner, we could apply the trained policy to far more complicated domains\textemdash{}even $3$D domains\textemdash{}without significant loss of efficiency.
For the purpose of reproduction and wider adoption, this work is accompanied by an open-source Python-based implementation \cite{Code}.

\subsection*{Acknowledgements} \label{sub:acknowledgements}

We thank Bob Anderson, Daniel Faissol, Brenden Petersen, and Tzanio Kolev for first considering adaptive mesh refinement as a Markov decision process, securing funding to explore the idea, and ultimately recruiting and encouraging us to further develop it.
We also thank Dylan Copeland, Tarik Dzanic, Ketan Mittal, and Jiachen Yang for countless helpful discussions along the way.
Finally, last but not least, we thank Justin Crum for his early numerical experiments with $hp$-refinement and Jennifer Zvonek for her careful reading of the manuscript.

\subsection*{Disclaimer} \label{sub:disclaimer}
{\small
This work was performed under the auspices of the U.S. Department of Energy by Lawrence Livermore National Laboratory under contract DE--AC52--07NA27344 and the LLNL-LDRD Program under Project tracking No.\ 21--SI--001.  
Release number LLNL--JRNL--837212.
}

\phantomsection\bibliographystyle{siamplain}
\bibliography{main.bib}

\end{document}